\newcommand{\R}        	{\mathbb{R}}
\newcommand{\mL}    	{\mathscr{L}}
\newcommand{\D}        	{\Delta}
\newcommand{\p}			{\partial}
\newcommand{\Oo}		{\mathcal{O}}
\newcommand{\T}			{\mathbf q}
\newcommand{\col}[1]{\begin{bmatrix}#1\end{bmatrix}}
\newcommand{\bra}[1]{\left[#1\right]} 
\newcommand{\mat}[1]{\begin{matrix}#1\end{matrix}} 
\newcommand{\bmat}[1]{\bra{\mat{#1}}} 
\def \sig{\kappa}
\def \q{q}
\def \Ups{ \R^n}
\def\R{\mathbb R}
\def \k{\underline{k}}
\def
\def \dPhi{ \nabla_x \Phi}
\newtheorem{theorem}{Theorem}[section]
\newtheorem{Lemma}[theorem]{Lemma}
\newtheorem{Proposition}[theorem]{Proposition}
\theoremstyle{definition}
\theoremstyle{definition}
\newtheorem{Definition}[theorem]{Definition}
\newtheorem*{Assumption}{Assumption}
\theoremstyle{remark}
\newtheorem{Remark}[theorem]{Remark}
\numberwithin{equation}{section}
\begin{document}

\title[Inverse problem for inhomogeneous non-smooth waves]{Recovery of piecewise smooth parameters in an acoustic-gravitational system of equations from exterior Cauchy data }


\author{Sombuddha Bhattacharyya}
\address{Indian Institute of Science Education and Research Bhopal}
\email{sombuddha@iiserb.ac.in}
\thanks{}

\author{Maarten de Hoop}
\address{Rice University}
\thanks{}

\author{Vitaly Katsnelson}
\address{New York Tech}
\email{vkatsnel@nyit.edu}
\thanks{}

\subjclass{Primary 35S30, Secondary 35L51}

\keywords{microlocal analysis, inverse problems, acoustic-gravitational system of equations }

\date{January 1, 2004}
\dedicatory{}

\begin{abstract}
In this paper, we study an inverse problem for an acoustic-gravitational system whose principal symbol is identical to that of an acoustic wave operator. The displacement vector of a gas or liquid between the unperturbed and perturbed flow is denoted by $u(t,x)$. It satisfies a partial differential equation (PDE) system with a principal symbol corresponding to an acoustic wave operator, but with additional terms to account for a global gravitational field and self-gravitation. These factors make the operator nonlocal, as it depends on the wave speed and density of mass. We assume that all parameters are piecewise smooth in $\mathbb{R}^3$ (i.e., smooth everywhere except for jump discontinuities across closed hypersurfaces called interfaces) but unknown inside a bounded domain $D$. We are given the solution operator for this acoustic-gravitational system, but only outside $D$ and only for initial data supported outside $D$. Using high-frequency waves, we prove that the piecewise smooth wave speed and density are uniquely determined by this map under certain geometric conditions.

\end{abstract}

\maketitle

\section{Introduction}
There is a substantial body of literature on inverse problems aimed at recovering multiple coefficients of a wave operator simultaneously, including coefficients that are not present in the principal symbol of the wave operator (see, for example, \cite{OksanenLowerOrder, BHKUdensityInterior, Montalto2014}). However, these prior works are incomplete in many real-world scenarios, such as the elasto-gravitational equations modeling seismic waves, where non-local wave interactions arise due to the planet's self-gravitation. Most previous studies, by contrast, deal with local wave operators.

These gravitational interactions can be modeled by a zeroth-order pseudodifferential operator, adding complexity to the analysis of inverse problems. This is because the principles of unique continuation and finite speed of propagation no longer apply. Additionally, measuring amplitudes using high-frequency waves or Gaussian beams at the boundary does not resolve the issue. The non-local component of the wave operator naturally manifests in the ``lower order" amplitudes of the waves, requiring direct disentanglement and analysis of these non-local wave contributions.

Our goal is to image the wave speed and density of mass in more comprehensive planetary models that include both the gravitational potential and self-gravitation, using only exterior measurements. In addition, the coefficients are smooth but are discontinuous across a set of of closed hypersurfaces, which creates multiple scattering in the wave data. This problem falls into the category of boundary inverse problems for an elastic system, derived from the dynamic Dirichlet-to-Neumann map. However, the most recent works \cite{Oksanen2020, BHKUdensityInterior, RachDensity}, which uniquely determine wave speed and density from boundary measurements, remain incomplete because they use a local wave model that overlooks the effects of self-gravitation and the gravitational potential. These effects are inherently dependent on mass density in a nontrivial way.

In this work, we address this gap by solving an inverse problem for an acousto-gravitational system that incorporates self-gravitation and a gravitational field, while at the principal symbol level, is an acoustic wave equation. We demonstrate how to disentangle the non-local wave interaction components in the data to image both wave speed and density. Although we study an equation whose principal symbol is scalar to simplify computations, our analysis extends to the full elasto-gravitational system as well (see section \ref{s: discussion}).

Mathematically, we will uniquely recover the wave speed and other coefficients of a second-order acousto-gravitational initial-boundary value problem (see \eqref{IVP}) from measurements taken outside a bounded domain 
$\Omega \subset \R^3$. This system involves both a hyperbolic partial differential operator and lower-order terms that model the reference gravitational force and the planet's self-gravitation. The corresponding operator is non-local and depends globally on the density of mass. The study aims to understand and quantify the impact of this non-local term on the inverse problem.
Additionally, the coefficients are assumed to be piecewise smooth in the closure of the domain, with potential jumps at interfaces separating different subdomains. To facilitate the analysis, the coefficients are extended to the entire three-dimensional space outside the domain and are assumed to be known outside 
$\Omega$. While the hyperbolic part governs the propagation of singularities, finite speed of propagation no longer applies due to the pseudodifferential component. Although we study an equation whose principal symbol is scalar to simplify computations, our analysis extends to the full elasto-gravitational system as well (see section \ref{s: discussion}).

Even in the local elastic wave setting, it was proven in \cite{RachDensity} that density does not appear in the ``leading order amplitude" of a wave solution, which can also be interpreted as the principal symbol of the Dirichlet-to-Neumann map. Instead, it only begins to appear in the ``lower order polarization," demonstrating that the leading order amplitudes of elastic waves contain no information about the density. Our proof shows that both the gravitational potential and self-gravitation appear in the second to leading-order wave amplitudes that contain information on the density. Therefore, such gravitational terms cannot be simply ignored in the analysis due to being ``lower order" in the operator; in other words, although these terms do not appear in the principal symbol of the elastic wave operator, these terms cannot be ignored when recovering the density even when using microlocal techniques. We demonstrate how to disentangle the density from the gravitational terms in order to image density.

 Another novelty is that our proof shows that measuring wave amplitudes in the exterior also does not suffice to determine the density if we have no \emph{a priori} information on the reference gravitational potential $\Phi$. Hence, we must assume that $\Phi$ is known at the surface of the object. It is important to note that knowledge of $\Phi$ at the surface by itself also does not give us significant information about $\rho$ in the interior. In \cite{IsakovGravimetry}, the authors show that knowledge of $\Phi$ at the surface does not even determine a characteristic function $\chi_\Omega$, where $\Omega$ is some domain, unless it falls within a certain class. Hence, we necessarily need elastic wave amplitudes as well. Thus, this inverse problem takes on a coupled form where both a hyperbolic equation and an elliptic equation are analyzed simultaneously for imaging a coefficient.

 \begin{Remark}
 Knowledge of $\Phi$ at the surface is a reasonable assumption in practice (see \cite{InverseGrav}) and is related to a classical inverse problem of gravimetry. In the inverse gravimetry problem, $\nabla_x \Phi$ is assumed to be measured at the surface of the object rather than $\Phi$, and one tries to obtain information about $\rho$ in the interior \cite{IsakovGravimetry}. However, in our setting, both the wave operator and the transmission conditions only involve $\nabla_x \Phi$ and not $\Phi.$ Hence, our measurement data in theorem \ref{Main_th_2} is unchanged if we replace $\Phi$ by adding to it a piecewise constant function, so we could have used knowledge of $\nabla_x\Phi$ outside of $\Omega$ rather than $\Phi$ as well.
\end{Remark}

The displacement vector of a gas or liquid parcel between the unperturbed and perturbed flow is $u$. 
The 
 bounded domain is denoted $\Omega\subset \mathbb R^3$ and assume each coefficient is a piecewise smooth, positive function in the closure of the domain. We consider the wave operator on $\mathbb R_{+} \times \R^3$ as
\begin{equation}\label{Wave_op}
\mL = \mL_{\kappa,\rho} := \rho \p_{t}^2 - \nabla_x \kappa \text{div}_x + \ord1 + \rho \nabla S \rho, \qquad (t,x) \in \R_+\times\R^3,
\end{equation}
where $\kappa(x),\rho$ are real-valued functions, and $S$ is the operator defined by 
\[
\Delta S(\rho u) = -4\pi G \nabla \cdot \rho u
\]
with gravitational constant $G$. Also $\Phi$ satisfies
\[
\Delta_x \Phi = 4\pi G \rho 
\]
with standard transmission conditions across interfaces. We also have the \emph{hydrostatic pressure} $p^0$ that satisfies
\[
\nabla_x p^0 = - \rho \nabla_x \Phi.
\]

We assume that the coefficients $\rho, c$ of the PDE are ``piecewise smooth'' in $\overline{\Omega}$, which we now describe. 
Let $\{\Gamma_j\}_{j=0}^{m}$ be a collection of smooth, closed, hyper-surfaces that we term \emph{interfaces} such that $\Gamma_0 = \partial\Omega$ and $\Gamma:= \cup_{j=0}^{m} \Gamma_j$, which split $\Omega$ into a finite collection of subdomains. We assume that each coefficient we consider is smooth ($C^\infty$) in $\overline{\Omega}$ up to these interfaces, with possible jumps there. Thus, the PDE coefficients are in $C^\infty(\overline{\Omega}\setminus \Gamma)$; see \cite[Figure 1]{SUV2019transmission} for an illustration of such a domain. For mathematical convenience, to allow us to work outside of $\Omega$, we assume all coefficients have been extended to $\R^n$ and are in $C^\infty(\R^n \setminus \Gamma)$. 

It will be useful to denote 
\begin{equation}
B(x,D) = \nabla S
\end{equation}
which may be thought of as a pseudodifferential operator of order $0$ in $\Psi(\R^n)$.

\begin{Remark} \label{examp: elliptic B}
In fact, the equation $\mathcal{L}u = 0$ can be expressed as a matrix system of differential operators in the form
\[
\rho \p_t^2 u - \nabla_x \kappa \nabla_x \cdot u
+ \ord1 +\rho \nabla \Psi(x) = 0
\]
where $(\Psi, \Phi)$ satisfies the equation
\[
\Delta \Psi = -4\pi\nabla \cdot (\rho\ u), \qquad \Delta_x \Phi = 4\pi G \rho
\]
subject to certain boundary and interface conditions determined by a variational principle in the Euler-Lagrange equations. Hence, this may be thought of as a coupled physics problem with hyperbolic and elliptic constituents. Other works that contain similar equations, but in a systems of PDE setting may be found in \cite{Saturn,LyndenBell}. However, in the current work, we do not consider this perspective, as the analysis simplifies considerably when $\nabla S$ is thought of merely as a pseudodifferential operator. There have been a growing number of articles studying wave equations involving a zeroth order, self-adjoint pseudodifferential operator(see \cite{DyatlovForced,ColinForced} and the references there).
\end{Remark}

\subsection*{History of the problem}
In general, the hyperbolic inverse problem asks to determine the unknown coefficient(s), representing wave speeds, of a wave equation inside a domain of interest $\Omega$, given knowledge about the equation's solutions (typically on $\p \Omega$). Traditionally for inverse problems, the coefficients are taken to be smooth, and the data is the Dirichlet-to-Neumann (DN) map, or its inverse. The main questions are the uniqueness and stability of the coefficients: Can the coefficients be recovered uniquely from the Dirichlet-to-Neumann map, and is this recovery stable relative to the perturbations in the data? In the case of a scalar wave equation with smooth coefficients, several results by Belishev, Stefanov, Uhlmann, and Vasy \cite{Belishev-multidimenIP, UVLocalRay, SURigidity} have answered the question affirmatively. For the piece-wise smooth case, a novel scattering control method was developed in \cite{CHKUControl} to show in \cite{CHKUUniqueness} that uniqueness also holds for piece-wise smooth wave speeds with conormal singularities, under mild geometric conditions. In the elastic setting, in \cite{SUV2019transmission}, Stefanov, Uhlmann, and Vasy recover piece-wise smooth wave speeds from the Dirichlet-to-Neumann map under certain geometric conditions. That paper essentially recovers coefficients appearing in the principal symbol of the partial differential operator but does not address recovering piece-wise smooth lower-order coefficients such as the density of mass.
In the smooth elastic case, Rachele in \cite{RachDensity} recovers the density for the isotropic elastic wave equation for smooth coefficients.  
 By combining the general results of Weinstein in \cite{weinstein1976} together with ``scattering control'' described in \cite{CHKUElastic}, it was shown in \cite{BHKUdensityInterior} that a piece-wise smooth density of mass in addition to the wave speeds (under certain geometric conditions) may be recovered from exterior measurements, thereby recovering all both Lam\'e parameters and the density with conormal singularities. 

 There are several works on hyperbolic inverse problems with memory; see \cite{RomanovMemory} and the references therein. In that paper, the authors consider a viscoelastic equation, which consists of the isotropic elastic wave operator plus a nonlocal operator of the form:
 \[
A(u) = \sum_{j=1}^3\int_{-\infty}^t
p(x,t-s)\delta_{ij} \nabla \cdot u(x,s) + q(x,t-s)(\p_{x_j}u_i(x,s) + \p_{x_i}u(x,s)) ds
 \]
 where $p(x,t)$ and $q(x,t)$ characeterize the viscoelasticity of the medium and are assumed to be unknown, along with the elastic moduli. In this case, the nonlocality of the operator occurs only in the time variable. They show in \cite[Theorem 3.4]{RomanovMemory} that if the density $\rho$ is known and 
$p,$ $q$ are analytic in time, then $p$, $q$, and the elastic moduli $\lambda$, $\mu$ can be uniquely determined from boundary measurements. They also assume the coefficients are sufficiently smooth. In contrast, our work does not assume that the density is known, allows for jump discontinuities in the coefficients, and considers nonlocality in the spatial variable.

The inverse problem we consider here is to recover all of the coefficients appearing in $\mL$, which have conormal singularities, from the \emph{outside measurement operator} that maps Cauchy data supported outside of $\Omega$ to the corresponding wave field restricted to $\R^n \setminus \overline \Omega$. 
 The logic of the proof proceeds as in \cite{BHKUdensityInterior} to use a layer stripping approach with scattering control to recover the singularities and ``lower order amplitudes'' in the approximate solutions to the wave equation using the FIO calculus. Thus, the novelty is quantifying how the nonlocal term affects the transport equations for the amplitudes and the recovery of the density.
 The nonlocal term presents an additional challenge in the layer stripping argument, as wave solutions inside a known layer can be influenced by interactions occurring globally within unknown layers. To address this challenge, the analysis focuses on the singularities of the solution and introduces a ``microlocal" version of the DN map or the outside measurement operator.
 
An additional challenge is the presence of the reference gravitational potential $\Phi$ and its nonlocal dependence on $\rho$. In the analysis, it behaves as an additional unknown coefficient, which is why we assume it is known outside $\Omega$. We need to recover it as part of the layer-stripping process, which couples a hyperbolic inverse problem with an elliptic inverse problem.
 
  \subsection*{Notation and statement of the main theorem}
 We refer the reader to \cite[introduction]{Montalto2014}, which describes past results for the uniqueness question on this particular inverse problem in the case where all coefficients are smooth, the operator is local and involves a general anisotropic Riemannian metric $g$. Those works involve the Dirichlet-to-Neumann map (or its inverse), but as shown in \cite{Rach00}, there is an equivalence between exterior Cauchy data and boundary data. Many other works in the literature recover lower order coefficients in a local hyperbolic equation from boundary measurements in the smooth case; see for example \cite{Montalto2014,Oksanen2020,OksanenLowerOrder,BelishevLowerOrder} and the references there.

The wave speed inside $\Omega$ is given by $c(x) = \sqrt{\frac{\kappa}{\rho}}$ defined in $\Omega$. We assume $\rho$ and $c$ are piece-wise smooth in $\Omega$ and in $C^\infty$ outside of $\Omega$. Recall that $\{\Gamma_j\}_{j=0}^{m}$ is a collection of smooth, closed hypersurfaces such that $\Gamma_0 = \partial\Omega$ and $\Gamma:= \cup_{j=0}^{m} \Gamma_j$ contains all the singularities of the parameters. Hence, all the coefficients are smooth in $\R^n$ up to $\Gamma$, with possible jumps there.

Let $T = \text{diam}_g(\Omega)$ with the rough metric $g = c^{-2}dx^2$. Also, set $\Upsilon \subset \R^n$ to be an open bounded set such that $\Omega \Subset \Upsilon$ and $\text{dist}_g(\Omega, \p \Upsilon) > 2T$. Fix a unit conormal vector to $\Gamma$ denoted $\nu = \nu(x)$ at each $x \in \Gamma$. 
Following the notation in \cite[Section 3]{SUV2019transmission},
for an interface $\Gamma$, we work locally in a small
neighborhood of a point on $\Gamma$ and call one of its sides, $\Omega_-$ negative, the other one, $\Omega_+$, positive.
For a material parameter generically denoted $c$, we have $c = c_-$ in $\Omega_-$, and $c = c_+$ in $\Omega_+$, where $c_-$, $c_+$ are smooth up to $\Gamma$ and
$c_- \neq c_+$ pointwise.
 We denote $f|_{\Gamma_\pm}$ to be the limit of $f(x)$ as $x$ approaches $\Gamma$ from the positive/negative side.
 
Define the \emph{Neumann operator} at $\Gamma_\pm$ as
\begin{equation}\label{Neumann}
\mathcal N_{\pm} u = \kappa \ \text{div}(u) \nu
- \rho \langle u, \nabla_x \Phi \rangle \nu
+ \rho \nabla \Phi  \langle u, \nu \rangle 
\restriction_{(0,T) \times \Gamma_\pm}.
\end{equation}
Note that this is not the natural geometric description for the Neumann operator on fluid boundaries but we show in section \ref{s: natural transmission} that the natural Neumann operator may be simplified to this form. 
 When we wish to leave the exact boundary restriction ambiguous and refer to the normal operator at some hypersurface $\Gamma$,  we will also write  $\mathcal N u = \kappa \text{div}(u) \nu - \rho \langle u, \nabla_x \Phi \rangle \nu
+ \rho \nabla \Phi  \langle u, \nu \rangle
 \restriction_{(0,T) \times \Gamma}$ when it is clear which hypersurface $\Gamma$ we are referring to.

Let $h=(\psi_0,\psi_1) \in H_0^1(\Upsilon)\oplus L^2(\Upsilon)$ be a set of initial data supported in $\Omega^*:=\Upsilon\setminus\overline{\Omega}$. The displacement vector of a gas or liquid parcel between the unperturbed and perturbed flow is $u(t,x)$. We consider the Cauchy initial value problem
\begin{equation}\label{IVP}
\begin{aligned}
\mL u = \rho \p_{t}^2 u - \nabla_x \kappa \nabla_x \cdot u  + \nabla_x(\rho u \cdot \nabla_x \Phi) -
(\nabla_x \cdot \rho u) \nabla_x \Phi +\rho \nabla S(\rho u)=& 0 \quad &&\mbox{in }[0,T]\times\R^n\\
(u,\p_t u)|_{t=0} =& h \quad &&\mbox{in }\R^n,
\end{aligned}
\end{equation}
and also impose the transmission conditions at each interface $\Gamma_i$ 
\begin{equation}\label{e: trans conditions}
[\nu \cdot u] = [\mathcal N u] = 0 \qquad \text{ on } \Gamma_i,
\end{equation}
where $[v]$ stands for the jump of $v$ from the exterior to the interior across $\Gamma_i$. 

To guarantee the well-posedness of this hyperbolic Cauchy problem, we will apply  \cite[Theorem 8.1]{HoopPham}.
It is easy to verify that $B(x,D)$ is symmetric and maps from $L^1(0,T, H^1_0(\Omega)$ to $L^1(0,T, L^2(\Omega)$ where we identify $B(x,D)u$ in $\R^n$ by its restriction to $\Omega$, so it may be thought of as an element in $L^2(\Omega)$. Hence, \cite[Theorem 8.1]{HoopPham} applies and so
this forward problem has a unique solution $u_h(t,x) \in C^1(\R_+, H^1(\R^n))$.
We define the outside measurement operator
\begin{equation}\label{data_map}
\mathbf{F} : h \mapsto u_h(t,x)|_{\overline \Omega^c}.
\end{equation}

In this article, we prove that one can determine all the unknown, piece-wise smooth parameters of $\mL$ from $\mathbf{F}$.
To recover the parameters in the interior, we will recover local ray transforms of various tensors involving the PDE coefficients and then use the local injectivity results \cite{UVLocalRay,SUVRigidity}. 
We need geometric conditions on the domain to obtain a global result for the parameters from local injectivity near the interfaces.
Thus, we assume that the domain $\Omega$ admits a \textit{strict convex foliation}, which is discussed in Section \ref{Sec_geometric_cond}, and appears in \cite{SUV2019transmission,CHKUElastic}.
Let there be two sets of parameters $(\rho,\kappa)$ and $(\tilde{\rho},\tilde{\kappa})$ that are piece-wise smooth in $\Omega$, and are equal to each other in $\R^n \setminus \Omega$. We also need to assume $\Phi = \tilde \Phi$ in $\R^n \setminus \Omega$.
Let us write $\tilde{f}$ to be the quantity corresponding to the set of parameters $(\tilde{\rho},\tilde{\kappa})$ where $f$ is the same quantity corresponding to $(\rho,\kappa)$.

We also need some boundedness assumptions on $\rho, \tilde \rho$ and two of its derivatives in order to apply elliptic unique continuation results in the proof of the main theorem. We assume
\begin{equation}\label{e: lispchitz rho assumption}
\rho, \rho^{-1}, \nabla_x \rho, \nabla_x^2 \rho, \tilde\rho, \tilde {\rho}^{-1}, \nabla_x \tilde \rho, \nabla_x^2 \tilde \rho \in L^\infty(\Omega)
\end{equation}

 We now state our main result.
\begin{theorem}\label{Main_th_2}
	Let $\Omega \subset \R^n$ be a bounded domain with interfaces and
	satisfies the extended convex foliation condition described in Section \ref{Sec_geometric_cond}.
	Let $(\rho,\kappa)$, $(\tilde{\rho},\tilde{\kappa})$ be two sets of piece-wise smooth parameters described above.
	If $\mathbf{F}=\tilde{\mathbf{F}}$ and $(\rho, \kappa, \Phi) = (\tilde \rho, \tilde \kappa, \tilde \Phi)$ in $\R^n \setminus \Omega$, then $\rho=\tilde{\rho}$, $\kappa=\tilde{\kappa}$ in $\overline{\Omega}$.
\end{theorem}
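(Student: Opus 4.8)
The plan is to proceed by a simultaneous hyperbolic--elliptic layer stripping, peeling the foliation $\{\Sigma_s\}$ of Section \ref{Sec_geometric_cond} from $\p\Omega$ inward while maintaining the inductive hypothesis that $(\rho,\kappa,\Phi)=(\tilde\rho,\tilde\kappa,\tilde\Phi)$ on the already-recovered outer shell. The first task is to extract, from the exterior measurement operator $\mathbf F$, local reflection data at each interface. Following \cite{Rach00} one passes from exterior Cauchy data to boundary data at $\p\Omega$, and then uses the scattering-control construction of \cite{CHKUControl, CHKUElastic} to manufacture wave fields that, to leading order, interact only with the deepest newly exposed layer; this yields a microlocalized Dirichlet-to-Neumann/reflection operator at successively deeper interfaces. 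The central conceptual point that makes this possible despite the loss of finite speed of propagation is that $B(x,D)=\nabla S$ is a pseudodifferential operator of order $0$, so that $\rho\nabla S\rho$ acts on a high-frequency (conormal or Gaussian-beam) solution, to leading order, by multiplication by its principal symbol evaluated \emph{pointwise} along the wavefront. Hence although the operator is globally nonlocal, its contribution to the singularities and to the transport of amplitudes is local, and the layer-stripping induction can be run microlocally.

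With the microlocalized reflection data in hand, I would first recover the wave speed $c=\sqrt{\kappa/\rho}$ at the principal-symbol level. The singular support and travel times of the reflected waves determine the locations of the interfaces $\Gamma_j$ and the local geodesic X-ray transform of $c^{-2}$ in each new layer; the strict convex foliation hypothesis makes each layer a region on which the local X-ray transform is injective, so the results of \cite{UVLocalRay, SUVRigidity} give $c=\tilde c$ layer by layer. At this stage only the combination entering the principal symbol is seen, so $\rho$ and $\kappa$ are not yet separated.

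The density is recovered from the next order. Constructing geometric-optics solutions and writing their transport equations, the quantities $\rho$ (through $\rho\p_t^2$ and the divergence terms), $\nabla_x\Phi$ (through the two first-order gravitational terms), and the principal symbol of the order-zero operator $\rho\nabla S\rho$ all enter at the same subprincipal level, which is precisely the ``second to leading order amplitude'' that, as in \cite{RachDensity, BHKUdensityInterior}, carries the density information. The subprincipal symbol of the reflection/transmission data therefore gives a relation among $\rho$, its jump across $\Gamma_j$, and the gravitational quantities. To disentangle them I would use the constraint $\Delta_x\Phi=4\pi G\rho$: within the current outer shell, where the inductive hypothesis gives $\rho=\tilde\rho$, the difference $w=\Phi-\tilde\Phi$ is harmonic and vanishes on an exterior open set, so elliptic unique continuation (for which the bounds \eqref{e: lispchitz rho assumption} are imposed) forces $\Phi=\tilde\Phi$ there. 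Thus $\nabla_x\Phi$ and the nonlocal potential may be treated as \emph{known} when analyzing the subprincipal data in the next sliver, which decouples them from $\rho$; one then reads off the jump of $\rho$ at $\Gamma_j$ together with a local X-ray transform of a $\rho$-dependent tensor, and invokes \cite{UVLocalRay, SUVRigidity} once more to obtain $\rho=\tilde\rho$ in the new layer. Since $\kappa=\rho c^2$, this also gives $\kappa=\tilde\kappa$, and the elliptic step then extends $\Phi=\tilde\Phi$ into the new layer, closing the induction.

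The main obstacle is the interplay between nonlocality and the layer stripping: a priori the wave field in a known outer layer depends, through $\nabla S$, on the values of $\rho u$ in the still-unknown interior, so one cannot treat the layers as decoupled as in the purely hyperbolic case. Making the microlocal reduction rigorous---showing that the residual, non-principal (smoothing) part of the nonlocal contribution does not obstruct recovery of the finite-order jets of $\rho,c,\Phi$ at each interface, and that the scattering-control-generated fields remain controllable in the presence of the zeroth-order pseudodifferential term---is the crux. A closely related difficulty is the genuine coupling of the hyperbolic and elliptic inverse problems: because $\Phi$ and the nonlocal term appear at the \emph{same} subprincipal order as $\rho$, a single amplitude-matching identity cannot separate them, and it is only the simultaneous use of the elliptic equation together with unique continuation that breaks the degeneracy. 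Verifying that this coupled induction can indeed be initialized at $\p\Omega$ and carried through every leaf of the foliation is where the bulk of the technical work lies.
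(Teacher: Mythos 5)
Your overall architecture --- layer stripping along the foliation, scattering control to reach interior interfaces, speed recovery from travel times at the principal level, and density entering only at subprincipal level jointly with the gravitational terms --- matches the paper. But there is a genuine gap at the decisive step, the disentanglement of $\rho$ from $\Phi$ in the new layer. You argue that since $\rho=\tilde\rho$ in the already-recovered shell, $w=\Phi-\tilde\Phi$ is harmonic there and vanishes on an exterior open set, hence $\Phi=\tilde\Phi$ on the shell, and you then conclude that $\nabla_x\Phi$ ``may be treated as known when analyzing the subprincipal data in the next sliver.'' This last inference fails: inside the new sliver, $\Phi$ still depends, through $\Delta_x\Phi=4\pi G\rho$, on the unknown density in the sliver and everywhere below it, and knowledge of $\Phi$ on the outer shell (or on $\Sigma_\T$) does not determine it inside --- the paper stresses exactly this point, citing \cite{IsakovGravimetry} to the effect that surface knowledge of $\Phi$ does not even determine a characteristic function. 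Consequently the subprincipal amplitude identity in the sliver is a geodesic ray transform of a tensor containing \emph{both} the unknown $\log\sqrt\rho$ and the unknown $\Phi$, and your plan of reading off a $\rho$-only transform and invoking \cite{UVLocalRay,SUVRigidity} collapses. (You also elide a gauge: local tensor rigidity \cite{SUVlocaltensor} yields only $B=dv$ with $v$ a one-tensor vanishing on the boundary, not $B=0$.) The paper's route is different and unavoidable: after tensor rigidity, one applies the Saint--Venant operator to kill the potential part, obtaining a fourth-order PDE coupling $\beta_-=\log\sqrt{\rho/\tilde\rho}$ with $Y=\Phi-\tilde\Phi$; since the $\nabla_x\Phi$ terms make any single equation for $\beta_-$ nonlocal, one adjoins the biharmonic equation $\Delta^2 Y=k_0\Delta(\rho-\tilde\rho)$ and proves unique continuation for the coupled elliptic \emph{system} via Carleman estimates --- this is where the bounds \eqref{e: lispchitz rho assumption} are actually used (for the Lipschitz estimates \eqref{e: f_1 and f_2 Lipschitz inequality}), not in your harmonic step, which needs no such hypotheses.

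A second, smaller gap: you correctly flag as ``the crux'' that nonlocality could let the unknown interior contaminate wave fields in the known shell, but you leave it unresolved. The paper settles it with a short microlocality argument: setting $w=u-\tilde u$, one has $\mL w=-\rho B(\rho-\tilde\rho)\tilde u$ outside $\overline\Omega_\T$, and since $B$ is pseudodifferential, $\mathrm{WF}(B(\rho-\tilde\rho)\tilde u)\subset\mathrm{WF}((\rho-\tilde\rho)\tilde u)\subset \overline{T^*\Omega_\T}$, so $\mL w$ is smooth there; propagation of singularities combined with the convex foliation then forces $w\in C^\infty$ and hence $u\equiv\tilde u$ outside $\overline\Omega_\T$. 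Your heuristic that the principal symbol of $\nabla S$ acts pointwise along the wavefront is the right instinct for why the transport equations stay local, but the mechanism needed to decouple the layers is this wavefront-set containment, which your proposal names as an obstacle rather than supplies.
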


The novelty of the paper is that it provides a comprehensive understanding of the impact of the nonlocal self-gravitation term $S(u)$ as well as $\Phi$, which depends on $\rho$ globally, on the recovery process, emphasizing the recovery of the density and its relation to the transport equations. Essentially, we will show that $\nabla S$ and $\Phi$ affect the ``lower order amplitudes'' of the FIO parametrix for this wave equation and then analyze the effect of $\nabla S$ and $\Phi$ on the transport equations satisfied by these amplitudes to recover $\rho.$ In addition, the density $\rho$ is entangled in a single wave amplitude jointly via $\Phi$ and $S$ that depend on $\rho$ globally, so we develop an argument using propagation of singularities and Carleman estimates for an elliptic system to disentangle $\rho$ from these constituents for the recovery.
By establishing these results, the paper contributes to the study of inverse problems for PDE systems containing nonlocal operators.

Let us provide a brief outline of the paper. In Section \ref{Sec_geometric_cond}, the paper introduces the notation and outlines the geometric assumptions made for the domain $\Omega$. These assumptions are crucial for the analysis and recovery process. Additionally, the microlocal parametrix solution to the initial value problem \eqref{IVP} is constructed.

Section \ref{Sec_intro_interface_determination} presents an important proposition that plays a key role in the main theorem. This proposition enables the recovery of the jumps of the coefficients of the operator $\mathcal{L}$ across interfaces by measuring reflected waves. This step is essential in the layer-stripping argument employed in the main theorem, allowing for coefficient recovery across interfaces encountered during the process.

Finally, in Section \ref{sec: proof of main theorem}, the main theorem is proved. The section provides a detailed proof of the theorem, which demonstrates the recovery of all the unknown parameters from the outside measurement operator. The analysis takes into account the effect of the term $S(\rho u)$ on the lower-order amplitudes of the Fourier Integral Operator (FIO) parametrix for the wave equation. It is shown that $S(\rho u)$ does not affect the travel times of rays, enabling the recovery of the wave speed $c$ as in previous works. However, the terms $S(\rho u)$  and $\Phi$ does impact the transport equations satisfied by these lower-order amplitudes, leading to the recovery of the density $\rho$.
\section{Preliminaries and notation}\label{Sec_geometric_cond}
 In this section, we introduce our notation and state our geometric assumptions.
All the definitions and results in this section briefly summarize what was defined in \cite{B_density,CHKUControl,CHKUElastic}.

\subsection{Geometric Transmission Conditions} \label{s: natural transmission}

It is shown in ?? Dahlen/Tromp equation (3.80) that the fluid Neuamnn operator is
\begin{equation}\label{e: trans geometric neumann}
\mathcal N u
= p^0(\gamma-1) (\nabla \cdot u) \nu
+p^0 (\nabla u) \cdot \nu+\nu \nabla_{\Gamma}\cdot (p^0 u)
- p^0 (\nabla_\Gamma u) \cdot \nu |_{\Gamma}
\end{equation}
where $p^0$ is the hydrostatic pressure solving \[\nabla p^0 = -\rho \nabla \Phi,\]
$\nabla_\Gamma$ is the tangential del operator with respect to the hypersurface $\Gamma$ and $\gamma = \kappa/p^0$ is the adiabatic index. The quantity $\mathcal N u$ has to be continuous across each interface along with the tangential slip transmission condition
\[
[\langle u, \nu \rangle]^+_- = 0.
\]
To facilitate the microlocal parametrix construction, we need to simplify $\mathcal N u$ to isolate the first order differential part from the remaining components.
Note that
\[
\nabla u \cdot \nu
= \nabla_\Gamma u \cdot \nu +
\langle \nabla_\nu u, \nu \rangle \nu. 
\]
By plugging this into \eqref{e: trans geometric neumann}, we get a cancellation with the last term in $\mathcal N u$
so 
\[
\mathcal N u
= p^0(\gamma-1)(\nabla \cdot u)\nu 
+ p^0\langle \nabla_\nu u, \nu \rangle \nu
+ \nu \nabla_\Gamma \cdot (p^0 u).
\]
Next, we expand
\[
\nabla_\Gamma
\cdot (p^0 u)
= \nabla \cdot (p^0 u)
- (\nabla_\nu p^0 u) \cdot \nu
\]
\[
=p^0 \nabla \cdot u
+ \langle\nabla p^0, u \rangle
- (\nabla_\nu p^0) \langle u, \nu \rangle
- p^0 \langle \nabla_\nu u, \nu \rangle. 
\]
Plugging this in gives
\[
\mathcal N u
= p^0 \gamma \nabla \cdot u \nu 
+ \nu \langle \nabla p^0, u \rangle
- \nabla_\nu p^0 \langle u, \nu \rangle \nu 
\]
So using the tangential slip condition, the Neumann operator becomes
\begin{equation}\label{e: trans final Neumann op}
\mathcal N u
= 
\kappa (\nabla \cdot u) \nu
- \nu \rho \langle \nabla \Phi, u \rangle 
+ \rho \langle \nabla \Phi, \nu \rangle \langle u, \nu \rangle \nu.
\end{equation}
As derived in \cite{HoopPham}, at the interface, $\nabla \Phi$ is parallel to  $\nu$ that
so
\[
\langle \nabla \Phi, \nu \rangle \nu = \nabla \Phi 
\]
and this gives the transmission conditions \eqref{e: trans conditions}.
\subsection*{Geometric assumptions}
The principal symbol of the hyperbolic operator $\mL$ is given as
\begin{equation}\label{princ_symb}
p(t,x,\tau,\xi)
= -\rho (\tau^2 - c^2 \xi \otimes \xi).
\end{equation}
It is easy to check that the eigenvalues of the principal symbol are $\rho \tau^2$ and $\rho(\tau^2 - c^2 |\xi|^2)$. Hence, the characteristic set for $\mL$  that determines the ray geometry is
\[\Sigma_p:=\{\text{det}(p(t,x,\tau,\xi)) = 0\}
= \{ \tau^2-c^2|\xi|^2 = 0
\text{ or } \tau = 0 \}\].

One can calculate the lower order terms in the full symbol of $\mL$, and the order one term is

\begin{equation}\label{sub-princ_symb}
p_1(t,x,\tau,\xi) = i\nabla_x\kappa \otimes \xi -  i \rho \xi \otimes \nabla_x \Phi 
+ i \rho \nabla_x \Phi \otimes \xi 
\end{equation}

\subsection*{Geodesics and bicharacteristics}
The bicharacteristics curves $\gamma^{\pm}$ in $T^*(\R\times\Ups)$ are the integral curves of the Hamiltonian vector fields $V_{H^{\pm}}$, where $H^{\pm} = \tau \pm c|\xi|$ along with the condition that $\gamma^{\pm}$ lies in the set $\Sigma_p$.
Parametrized by arclength $s\in \R$, one obtains
\begin{equation}\label{parametrization_bichar}
\frac{dt}{ds} = c^{-1}, \qquad
\frac{dx}{ds} = \pm \frac{\xi}{|\xi|}, \qquad
\frac{d\tau}{ds} = 0, \qquad
\frac{d \xi}{ds} = \mp |\xi| \nabla_x (\log c),
\end{equation}
along with the condition that $p(t,x,\tau,\xi) = 0$.
We refer to $\gamma^{\pm}$ as the forward and backward bicharacteristic curves.

\subsection*{Foliation Condition}\label{s: foliation condition}

We assume that the domain $\Omega$ has an \emph{extended convex foliation} with respect to the metrics $g_{c}$. This is an extension of the convex foliation condition given in \cite{UVLocalRay} to the piece-wise smooth setting and was introduced in \cite[Definition 3.2]{CHKUElastic}.
\begin{Definition}[Extended convex foliation]\label{foliation_defn}
	We say  $\sig : \overline{\Omega} \mapsto [0,\tilde{\T}]$ is a (piece-wise) extended convex foliation for $(\Omega,g_{c})$ if
	\begin{enumerate}
		\item $\sig$ is smooth and $d\sig \neq 0$ on $\overline{\Omega}\setminus\Gamma$.
		\item $\sig$ is upper semi-continuous.
		\item each level set $\sig^{-1}(\T)$ is geodesically convex with respect to $g_c$, when viewed from $\sig^{-1}((\T,\tilde \T))$ for any $t \in [0,\tilde{\T})$.
		\item $\partial\Omega = \sig^{-1}(0)$ and $\sig^{-1}(\tilde{\T})$ has measure zero.
		\item there is some $\T_i \in [0,\tilde{\T}]$ such that $\Gamma_i \subset \sig^{-1}(\T_i)$ for $i=0,\dots,m$.
		\item $\limsup_{\epsilon \to 0+}c|_{\sig^{-1}(\T+\epsilon)}
		\leq \limsup_{\epsilon \to 0^+} c|_{\sig^{-1}(\T-\epsilon)}$ whenever $\Gamma_i \subset \sig^{-1}(\T)$ for some $i$ and $\Gamma_i$.
	\end{enumerate}
We say $c$ satisfies the \emph{extended foliation condition} if there exists an extended convex foliation for $(\Omega, g_c)$.
\end{Definition}

See the discussion below \cite[Definition 3.2]{CHKUElastic} for an explanation of the last condition. We write $\Omega_{\T}:= \sig^{-1}(\T,\tilde{\T}]$ to denote the part of the domain whose boundary is $\Sigma_\T:= \sig^{-1}(\T)$.
Let us fix the convention of writing `above' $\Sigma_\T$ to be outside of $\Omega_\T$ and `below' to be inside $\Omega_\T$.
We write $\Sigma^{\pm}_\T$ to denote two copies of $\Sigma_\T$ approached from 'above' or `below' $\Sigma_\T$.
Let us observe that, if required, we can extend each $\Gamma_j$ along with $\Sigma_{\T_j}$ and denote $\Omega_j$ to be the connected components of $\overline{\Omega}\setminus \Gamma$. Write $\widetilde{\Omega}_j$ to be $\Omega_{\T_j}$, where $\Gamma_j\subset\Sigma_{\T_j}$. The boundary distance function of the domain $\Omega_\T$ will be denoted $d^{\T}(\cdot, \cdot)$.

We end this section by summarizing all the assumptions we have made so far.
\begin{Assumption}
	With the above notation, we assume that:
	\begin{enumerate}
		\item the interfaces $\Gamma_j$ are a collection of disjoint, connected, closed hypersurfaces in $\overline{\Omega}$,
		\item $\Omega$ has an extended convex foliation $\sig$ with respect to $g_c$.
		
	\end{enumerate}
\end{Assumption}

Next, we will describe the microlocal parametrix for the wave equation and a key proposition that lets us generate specified virtual sources in the interior from the outside measurement operator.
\subsection{FIO parametrix and scattering control}\label{s: FIO parametrix}

In this section we construct the microlocal parametrix solution to \eqref{IVP} in the smooth setting similar to the one in \cite{RachBoundary}. Since the arguments we will use to prove the main theorem are done locally, this will suffice. The full parametrix with transmission conditions where the material parameters have discontinuities is in \cite{CHKUUniqueness}.
First, the geometric optics solution for the initial value problem \eqref{IVP} when the material parameters are smooth has the form
\begin{equation*}
U = E_0 \psi_0 + E_1 \psi_1,
\end{equation*}
where $E_k$, $k=0,1$ are the solution operators given in terms of FIOs. We write $A\equiv B$ for two FIOs $A$ and $B$ to denote that $A$ is same as $B$ modulo a smoothing operator.
We impose that the FIOs $E_0$ and $E_1$ solve the following system modulo a smoothing operator.
\begin{equation}\label{E_k PDE}
\begin{aligned}
\mathcal L E_{\k} &\equiv 0 \quad &&\mbox{on } (0,\infty) \times \R^n,\\
E_0|_{t=0} &\equiv I, \quad \partial_t E_0|_{t=0} \equiv 0, \quad&& \mbox{on }\R^n,\\
E_1|_{t=0} &\equiv 0, \quad \partial_t E_1|_{t=0} \equiv I, \quad&& \mbox{on }\R^n.
\end{aligned}
\end{equation}
The FIOs $E_{\k}$ for ${\k}=0,1$ are given as
\begin{equation}\label{E_k FIO}
\begin{aligned}
E_{\k} f
=&\sum_{\pm} \int_{\R^n} e^{i\phi^{\pm}(t,x,\eta)} e_{\pm,\k}(t,x,\eta) \hat{f}(\xi)\,d\xi
+
C_{\pm, \k}(t,x,D_x)f,
\end{aligned}
\end{equation}
where $C_{\pm, \k}$ is a classical $\Psi$DO, $e_{\pm,k}(t,x,\xi)$ is the amplitude given as a classical $3 \times 3$ matrix symbol, and $\phi^{\pm}$ is the phase function.

The phase functions $\phi^{\pm}(t,x,\xi)$ is homogeneous of order 1 in $|\xi|$ and solve the eikonal equations $\text{det} \  p(t, x, \p_{t,x}\phi^\pm) = 0$ where $\p_t \phi \neq 0$, namely
\[
\p_t \phi^\pm = \mp c |\nabla_x \phi^\pm|.
\]
The eikonal equations are noncharacteristic, first-order nonlinear equations which
can be solved by Hamilton-Jacobi theory with initial value
\[
\phi^\pm |_{t = 0} = x \cdot \eta
\]
given in terms of the parameter $\eta \in \R^3 \setminus 0$. It follows that
\[
\nabla_x \phi^\pm |_{t=0} = \eta.
\]
We also denote 
\[
N(t,x,\eta) = \frac{\nabla_x \phi}{|\nabla_x \phi|}
\]
where we leave out the $\pm$ index when it is clear which phase function is being used.

The amplitude $e_{\k}$ has an asymptotic expansion
\[
e_{\k} \sim \sum_J (e_{\k})_J
\]
where $(e_{\k})_J$ is homogeneous of order $|J|$ in $|\eta|$.
The $m$'th column $(e^{\cdot,m}_{\k})_J$ has the form
\[
(e_{\k}^{\cdot,m})_J
= (h^{\cdot,m})_J
+ (\alpha_{\k}^m)_J N
\]
where $(\alpha_{\k}^m)_J(t,x,\eta)$ are scalars that will satisfy certain transport equations and are homogeneous of degree $J$ in $\eta$, and $(h^{\cdot,m})_0= 0$, 
$(h^{\cdot,m})_J$ lie in the cokernel of $p(t,x,\p_{t,x}\phi)$. Thus, it is sometimes convenient to write
\[
e_{\k}^{\cdot,m}
= h^{\cdot,m}
+ \alpha_{\k}^m N
\]
with $ h^{\cdot,m} \sim \sum_{J =-1, -2, \dots} ( h^{\cdot,m})_J $ and 
$\alpha_{\k}^m \sim \sum_{J =0,-1,-2,\dots}(\alpha_{\k}^m)_J$.

Let us assume we are dealing with the case $\k = 0$ and $+$ since the other cases are analogous so we leave out these indices.
Each $(\alpha^m)_J$ satisfies a first order transport equation along the bicharacteristic with Hamiltonian $H = \tau - c|\xi|$ so there is a unique choice, at $t =0$, of initial conditions in order that the initial conditions for $E_{\k}$ are satisfied. In the case $J = 0$, for example, we have
\[
(\alpha^m)_0 = \frac 1 2 \frac{\eta_m}{|\eta|}
\]
It follows that the principal term
$(e_+)_0$ (at $t=0$) of $e_+(0,x,\eta)$ is given by
\[
(e_+)_0 = 
\left( (\alpha^1)_0 N, 
\ (\alpha^2)_0 N, 
\  (\alpha^3)_0 N \right)
 = \frac{1}{2} \frac{\eta}{|\eta|} \otimes \frac{\eta}{|\eta|}
 = \frac{1}{2}\text{Proj}_\eta
\]
where $\text{Proj}_\eta$ is the projection onto the space spanned by $\eta$. Similarly, we may verify that the principal symbol for $C_\pm$ denoted $(c_\pm)_0$ will be $\frac 1 2 \text{Proj}_{\eta^\perp}$ which is the projection onto the space orthogonal to $\eta.$

Hence, to simplify the notation going forward, since $(\alpha^m )_J$ satisfy the same transport equation for each $m$ and only the initial condition depends on $m$, we will merely analyze the quantity 
\[
\mL(t,x,D_{t,x}) e^{i\phi}(\alpha N+ h_{-1}), 
\]
and set
\[
a = \alpha N,
\]
treating $\eta$ as a parameter,
in order to construct the parametrix, with the $m,\pm,\k$ indices ignored, so that the full matix parametrix may be assembled with the previous analysis, without explicitly writing it out.

Also, the outoing/incoming parametrix mapping boundary data to the solution will have a similar construction where locally, if $x_3$ represents the defining function at the boundary, with coordinates $(t,x',x_3)$, then we may view it as a Cauchy data problem as above where $x_3$ is in place of the time variable. In this case, we have the above construction with a different initial conditions and parameters. Namely, the symbols have the form $e_\pm = e_{\pm}(t,x',x_3, \eta', \tau)$ and similarly for the other quantities.

\subsection*{Transport equations}
As mentioned above, we may construct a parametrix that propagates boundary data specified at $\Sigma_{\T}$ into the interior. We start with the ansatz
\begin{equation}\label{wave_parametrix_Interior}
u(t,x,\tau,\eta') = e^{i\varphi(t,x,\tau,\eta)}a(t,x,\tau,\eta')
\end{equation}
where $a = \alpha N+ h_{-1}$ above.
Note again that we have suppressed the $\k, \pm, m$ indices whenever it is clear from the context. 
The phase function $\varphi$ is given as a solution of the Eikonal equation
\begin{equation}\label{Eikonal_eq_Interior}
|\p_t \varphi|^2 = c^2(x)|\nabla_x\varphi|^2,
\quad \mbox{with the condition}\quad
\varphi|_{x \in \Sigma_{\T}} = -t\tau + x'\cdot\eta'.
\end{equation}

From the condition $\mL u = 0$, the amplitudes solve the algebraic equation
\begin{equation}\label{e: symbolic version Pu=0}
p(t,x, \p_{t,x}\phi)(a)_{J-1}
= B_p(a)_J + C_p(a)_{J+1}+ (c)_{J+1}
\end{equation}
where $p(t,x,\tau,\xi)$ is the principal symbol of $\mL$, $p_1(t,x,\tau,\xi)$ is the sum of the lower order terms in the symbol of $\mL$, 
\begin{multline}\label{e: B_p term}
B_p M
= i\p_{\tau,\xi}p(t,x,\p_{t,x}\phi) \cdot \p_{t,x} M
+i(ip_1)(t,x,\p_{t,x}\phi_p)M
\\
+\frac{i}{2}\sum_{|\alpha|=2}\sum_{l=1}^3
(\p_{\tau,\xi}^\alpha p^{il})(t,x,\p_{t,x}\phi)
\cdot (\p^\alpha_{t,x} \phi) M^l,
\end{multline}
\begin{multline}\label{e: C_p term}
C_p M = \p_{\tau,\xi}(ip_1)(t,x,\p_{t,x}\phi) \cdot \p_{t,x}M \\
+\frac 1 2 \sum_{|\alpha|=2}\sum_{l=1}^3 (\p^\alpha_{\tau,\xi}p^{il})(t,x,\p_{t,x}\phi)
\cdot \p_{t,x}^\alpha M^l, 
\end{multline}
and $(c)_{J+1}$ is determined by Lemma \ref{l: asymp expansion of P applied to lagrangian distribution} with $P$ there given by $B(x,D)$ in our setting with principal symbol $b_0(x,\xi)$. Note that lemma \ref{l: asymp expansion of P applied to lagrangian distribution} and the above formula is valid only in $\R^3 \setminus \Gamma$, where the coefficients are smooth. The FIO construction at $\Gamma$ is done separately in the appendix.

In particular
\begin{equation}
(c)_0 = \rho^2(x) b_0(t,x,\p_x\varphi)(\alpha)_0, \qquad x \in \R^3 \setminus \Gamma
\end{equation}
and, for ease of notation, set $(c)_1 = 0, \ (a)_1 = 0$.
By applying the operator $\text{Proj}_N$ to \eqref{e: symbolic version Pu=0} we get the compatibility condition
\begin{equation}\label{transport_eq_Interior}
N\left[ 
B_p(a)_J + C_p(a)_{J+1} + (c)_{J+1}
\right] = 0
\qquad J = 0, -1, -2, \dots
\end{equation}

This reduces to a transport equation form $(\alpha)_J$ along a bicharacteristic.
We denote $\zeta_{\pm}$ to be bicharacteristics of the wave operator $\mL$ parametrized by $s$ and satisfying \eqref{parametrization_bichar}.
From \eqref{transport_eq_Interior} and setting $J=0$ we conclude via \cite[section 4]{RachDensity} that the amplitudes $(\alpha)_0$ satisfies the transport equations on $\zeta_\pm$ given as
\begin{equation}\label{transport_eq_J}
\frac{d}{ds}(\alpha)_0 = 
-\left[\frac{d}{ds} \log \sqrt{\rho c}
- \frac{1}{2} (\nabla_x \cdot N))
\right](\alpha)_0, 
\end{equation}
where no $\Phi$ term appears due to a cancellation. This is because the first order symbol is $p_1(x,\xi) = -i \nabla_x \kappa \otimes \xi + i \rho \xi \otimes \nabla_x\Phi - i \rho \nabla_x \Phi \otimes \xi$
but $N \cdot p_1 N$ is independent of $\Phi$.
 
 Thus, the solution to this ODE restricted to a boundary or interface does \emph{not} depend on $\rho$ in the interior, and can be recovered just from $c$ in the interior and $\rho$ at the boundary. Also, it is unaffected by the self-gravitation term $\nabla S(\rho u)$. Hence, in order to recover $\rho$ in the interior, one needs to analyze $(\alpha)_{-1}$.
The ODE for $(\alpha)_{-1}$ will be of the form
\begin{equation}
\frac{d}{ds}(\alpha)_{-1} = 
-\left[\frac{d}{ds} \log \sqrt{\rho c}
- \frac{1}{2} (\nabla_x \cdot N)) 
\right](\alpha)_{-1}
+ G
\end{equation}
where will later compute $G$ explicitly, and it will depend on the principal symbol of $\rho \nabla S \rho$ in the interior, as well as $\Phi$. We refer to $(\alpha)_{-1}$ as the lower order amplitude and we will show that it depends on $\rho$ in the interior, enabling the interior determination of the density.

As seen above, having the nonlocal term with $B(x,D)$ requires a modification of the usual compatibility conditions. It cannot be ignored since the density recovery in the interior will depend on $(\alpha)_{-1}$, which depends on the additional terms involving $\rho$, namely $\nabla S(\rho u)$ and $\Phi$.
We present an important lemma from \cite[Lemma 3.3]{Oksanen2020} adapted to our setting with an analogous proof.
\begin{Lemma}\label{l: asymp expansion of P applied to lagrangian distribution}
Let $P(x,D) \in \Psi^m$ with proper support and a classical symbol. Let $a(x,\eta)$ be a zeroth order symbol with asymptotic expansion $a \sim \sum_{J=0}^\infty a_{-J}$ where $a_{-J} \in S^{-J}_{hom}$. Let $\Phi(x,\eta)$ be a phase function. Then
\[
e^{-i\Phi }Pe^{i\Phi} a
\sim \sum_{J = 0, -1, -2, \dots} c_{m+J}, \qquad c_{m+J}(x,\eta) \in S^{m+J}
\]
where
\[
c_m = p_m(x, d_x \Phi)(x,\eta)) a_0(x,\eta)
\]
\[
c_{m-1} = p_m(x, d_x \Phi) a_{-1}
+ \frac{1}{i}L a_0 + b(x,\eta) a_0,
\]
where in local coordinates
\[
Lu = \p_\xi p_m(x, d_x\Phi) \cdot \p_x u
\]
and
\[
b(x,\eta)= \frac{1}{2i} \p_{\xi_j \xi_k} p_m(x,d_x \Phi) \p_{x_j x_k}\Phi(x,\eta) + p_{m-1}(x,d_x \Phi(x,\eta))
\]
\end{Lemma}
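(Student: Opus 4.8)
The plan is to reduce the computation to the standard ``reduction to the diagonal'' for a pseudodifferential operator, and then read off the first two terms by tracking homogeneity in $\eta$. Writing $P$ as an oscillatory integral with polyhomogeneous symbol $p(x,\xi)\sim\sum_{l\ge 0}p_{m-l}(x,\xi)$, I would apply it to $v(y)=e^{i\Phi(y,\eta)}a(y,\eta)$ (treating $\eta$ as a parameter) and conjugate, obtaining
\[
e^{-i\Phi(x,\eta)}P\!\left(e^{i\Phi(\cdot,\eta)}a(\cdot,\eta)\right)(x)
=(2\pi)^{-n}\iint e^{i[(x-y)\cdot\xi+\Phi(y,\eta)-\Phi(x,\eta)]}\,p(x,\xi)\,a(y,\eta)\,dy\,d\xi.
\]
Substituting $y=x+w$ and splitting the phase by Taylor's theorem,
\[
\Phi(x+w,\eta)-\Phi(x,\eta)=w\cdot\nabla_x\Phi(x,\eta)+R(x,w,\eta),\qquad R=O(|w|^2),
\]
the phase becomes $-w\cdot(\xi-\nabla_x\Phi)+R$. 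The point of this splitting is that $R$ is quadratically small in $w$ and homogeneous of degree $1$ in $\eta$, so it only feeds into the lower-order terms.

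Next I would substitute $\zeta=\xi-\nabla_x\Phi(x,\eta)$ to get
\[
c(x,\eta)=(2\pi)^{-n}\iint e^{-iw\cdot\zeta}\,e^{iR(x,w,\eta)}\,p(x,\nabla_x\Phi+\zeta)\,a(x+w,\eta)\,dw\,d\zeta,
\]
an oscillatory integral with the standard phase $-w\cdot\zeta$ whose amplitude factors into a $\zeta$-part ($p$) and a $w$-part ($g:=e^{iR}a(x+\cdot,\eta)$). Integrating in $w$ first (a Fourier transform), Taylor expanding $p(x,\nabla_x\Phi+\zeta)$ in $\zeta$ about $\zeta=0$, and recognizing $(2\pi)^{-n}\int\zeta^\alpha\widehat g\,d\zeta=(D_w^\alpha g)(x,0,\eta)$ with $D_w=-i\partial_w$, I obtain the master expansion
\[
c(x,\eta)\sim\sum_{\alpha}\frac{1}{\alpha!}\,\partial_\xi^\alpha p(x,\nabla_x\Phi)\;D_w^\alpha\!\left[e^{iR(x,w,\eta)}a(x+w,\eta)\right]\Big|_{w=0},
\]
which is precisely the reduction-to-the-diagonal formula and is the analogue of \cite[Lemma 3.3]{Oksanen2020}.

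Finally I would extract $c_m$ and $c_{m-1}$ by homogeneity bookkeeping, using $R(x,0,\eta)=0$, $\partial_w R|_{w=0}=0$, and $\partial_{w_j}\partial_{w_k}R|_{w=0}=\partial_{x_jx_k}\Phi$. The $\alpha=0$ term contributes $p(x,\nabla_x\Phi)a=p_m a_0+(p_m a_{-1}+p_{m-1}a_0)+\cdots$; the $|\alpha|=1$ term gives $\sum_j\partial_{\xi_j}p(x,\nabla_x\Phi)(-i\partial_{x_j}a)$, whose top part is $\frac{1}{i}La_0$; and the $|\alpha|=2$ term, through the Hessian of $R$, contributes exactly $\frac{1}{2i}\partial_{\xi_j\xi_k}p_m\,\partial_{x_jx_k}\Phi\,a_0$. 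Collecting the degree-$m$ and degree-$(m-1)$ pieces gives $c_m=p_m(x,\nabla_x\Phi)a_0$ and $c_{m-1}=p_m a_{-1}+\frac1i La_0+b\,a_0$ with $b$ as stated. I expect the main obstacle to be not this algebra but the rigorous justification that the formal series is a genuine asymptotic expansion of symbols: one must use the proper support of $P$ to localize near the diagonal (the off-diagonal part being smoothing), make sense of the $w$-integral when $p,a$ are only classical symbols, and verify that the Taylor remainders yield amplitudes of strictly decreasing order. The expansion proceeds in integer steps $c\sim\sum_{J\le 0}c_{m+J}$ because $p$ is polyhomogeneous, $a$ has an integer-step expansion, and each factor of $\partial_w^2 R$ raises the $\eta$-degree by one while consuming two $w$-derivatives.
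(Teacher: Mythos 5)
Your proposal is correct and follows essentially the same route as the paper, which offers no proof of its own but cites \cite[Lemma 3.3]{Oksanen2020} ``with an analogous proof'': that proof is exactly the standard reduction-to-the-diagonal/stationary-phase expansion of $e^{-i\Phi}P(e^{i\Phi}a)$ that you carry out. Your extraction of the first two terms is accurate (in particular, using $R|_{w=0}=0$, $\partial_w R|_{w=0}=0$, $\partial^2_w R|_{w=0}=\nabla^2_x\Phi$ to produce $\frac{1}{i}La_0$ and $b\,a_0$ with the correct factor $\frac{1}{2i}$), and you rightly identify the remaining work as the routine justification of the oscillatory integrals via proper support and homogeneity bookkeeping.
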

Applying the above lemma to operator $B$ replacing $P$ and $m=0$, and taking $u$ as in \eqref{wave_parametrix_Interior}, 
the transport equation of is \eqref{transport_eq_Interior} (where we set $c_1 = 0$). Hence, we obtain the compatibility condition \eqref{e: symbolic version Pu=0}.

\begin{Remark}[Quantifying the effect of gravitation on the recovery procedure]
With the above notation, our proof will show that $\alpha_0 N$ is independent of $\rho, \Phi,$ and $S(\rho u)$. Hence, $\rho$ must be recovered in the interior from the lower order amplitude $\alpha_{-1}$ instead, which also depends on the gravitation terms $\Phi$ and $S(\rho u)$, and one is often interested in quantifying the effect of these gravitational terms on the amplitude of waves. This problem has a long history.
As described in \cite[Chapter 1]{DahlTromp}, in terrestrial seismology, the gravitational terms in the elastic equation are often excluded since observed oscillations of the Earth are predominantly elastic. It is also possible to ignore the first-order perturbation in the gravitational potential (represented by the term $S(\rho u)$) while retaining the initial potential $\Phi$, which is known as the \emph{Cowling approximation}. This permits both cases of predominantly gravitational as well as predominantly elastic oscillations while also retaining a local wave operator.

The justification usually given for ignoring the gravitational terms in the elastic equation is that their effect is negligible at the high frequencies typically studied, and their presence is only detected at low frequencies. 
This reasoning is only true when studying the leading order (in terms of powers in frequency, or Sobolev order of the wave) amplitude of a wave. 
However, the next-to-leading order amplitude represented by $\alpha_{-1}$ is needed to recover density, and the Cowling approximation is no longer valid for this term since the gravitational contribution is of the same order as the elastic contribution to this constituent of the wave. Practically, these gravitational ripples are small, but this paper gives an exact theory on doing the density recovery. 
\end{Remark}

An important fact is that since $B$ is a zeroth order operator, the transport equation for $(\alpha)_0$ remains the same as in \cite{RachBoundary} where density does not appear. Our first step is to show that we may recover the parameters and their derivatives at the interfaces from the knowledge of the solution on a one-sided neighborhood of the interface. This will later allow us to recover $q$-interior travel times past an interface in order to use the local geodesic ray transform results in \cite{UVLocalRay}.

\section{Interface determination}\label{Sec_intro_interface_determination}
 The first crucial piece for proving the main theorem is to recover the parameters and their derivatives across interfaces. Since we will prove the main theorem via a ``layer-stripping'' argument using the foliation function, we initially recover the coefficients in the first layer. In order to begin recovery of the second layer, we must show that all coefficients and their derivatives may be recovered across the first interface $\Gamma_1$ from $\mathbf F$. This will follow if we can show that this recovery may be made from the reflection operator $M_R$, which is a constituent of the parameterix of the wave equation, and it is the primary result of this section.
We prove that knowledge of the data only on one side of the interface is enough to recover the parameters on both sides of the interface.
 Later, we will only require a local recovery in the vicinity of a single interface, so our interface determination result is stated with only one interface $\Gamma$ and a small neighbourhood $\Oo$ of $\Gamma$ in $\R^n$.

Let $u_I$ be an incoming wave, started outside $\Oo$ and approaching $\Gamma$. After hitting $\Gamma$ transversely at some point $z_0$, $u_I$ splits into two parts: the reflected wave $u_R$ and the transmitted wave $u_T$. We will now construct an FIO parametrix of the solution in an open neighborhood of $z_0$ as done in \cite{SU-TATBrain,BHKUinterface}.
The set $\Gamma$, after a suitable extension, splits $\Oo$ into two parts $\Oo_{\pm}$ for two sides of $\Gamma$, where $u_I$ and $u_R$ travels through $\Oo_{-}$ and $u_T$ travels through $\Oo_{+}$.
We denote $\Gamma_{\pm}$ be two copies of $\Gamma$ approached from $\Oo_{\pm}$. From here onwards, we write $f_{\pm}$ to denote $f$ on $\bar{\Oo}_{\pm}$. For a fixed $h \in \mathcal E'(\R_t \times \Gamma)$ microsupported away from the glancing set, each constituent can be represented using an FIO in the form
\begin{equation}\label{e: u_I,R,T FIO rep}
u_{\bullet}(t,x) = \int e^{i\varphi_{\bullet}(t,x,\tau,\eta')}e_{\bullet}(t,x,\tau,\eta')\hat{h}(\tau,\eta')\,d\tau\,d\eta',
\qquad \bullet = I,R,T,\end{equation}
where $\varphi_{\bullet}$ is the phase function and $e_\bullet$ has an asymptotic expansion
\begin{equation*}
e_\bullet(t,x,\tau,\xi) = \sum_{J=0,-1,-2,\dots} (e_\bullet)_J(t,x,\tau,\xi'),
\end{equation*}
where $(e_\bullet)_J$ is homogeneous of order $|J|$ in $|(\tau,\xi')|$ (see \cite{SU-TATBrain} for more details).

Similar to the construction in section \ref{s: FIO parametrix}, each column of $e_\bullet$, denoted $e^m_\bullet$, is of the form
\[
e^m_\bullet = \alpha^m_\bullet N_\bullet + h^m_{\bullet, -1}
\]
with the same definitions, except that we treat we treat $\tau, \eta'$ as the parameter. The compatibility conditions remain the same as before, so that $(\alpha^m_\bullet)_J$ satisfies the same transport equations. The only change is that the initial conditions for the transport equations are determined at $\R_t \times \Gamma$, and assuming that the bicharacteristic is non-glancing. 

The transmission of the wave obeys Snell's law of refraction for pressure waves. At the interface $\Gamma$, the reflected and the transmitted waves satisfy a set of transmission conditions discussed in Section \ref{Sec_reflection_transmission}. The solution near $z_0$, denoted $u(t,x)$, may be written as $u = u_I + u_R$ in $\Oo_-$ and $u = u_T$ in $\Oo_+$.
From the transmission conditions, one can calculate the transmission and reflection operators $M_R$ and $M_T$, which are zeroth order $\Psi$DOs on $\Gamma$ belonging to the set $\Psi_{cl}^0(\Gamma)$, so that
\[ u_R|_{\Gamma_{-}} = M_R h \quad \mbox{and}\quad u_T|_{\Gamma_{+}} = M_T h, \]
where $h:= u_I|_{\Gamma_{-}}$, and $u$ satisfies the transmission conditions. We calculate the operators $M_{R/T}$ explicitly in Section \ref{Sec_reflection_transmission}. With this notation, we state the main result of this section.
\begin{Proposition}\label{Main_th_1}
	Let $\Oo$ and $\Gamma$ be as above. Suppose that $\Sigma_q \subset \Gamma$ and $c = \tilde c, \rho = \tilde \rho$
 in $\Oo_-$, and
 \[
  M_R \equiv \tilde M_R \text{ on } T^*\Sigma_q^-
 \]
 for some time $T>0$. Then $\p_{\nu}^j c^{(+)} = \p_{\nu}^j\tilde c^{(+)}$ and  $\p_{\nu}^j\rho^{(+)} = \p_{\nu}^j\tilde \rho^{(+)}$ on $\Gamma$ for all $j = 0, 1, 2, \dots$ and $M_T \equiv \tilde M_T$ on $\Gamma$.

\end{Proposition}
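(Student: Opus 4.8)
The plan is to compute the full classical symbols of $M_R, M_T \in \Psi_{cl}^0(\Gamma)$ from the transmission conditions \eqref{e: trans conditions} and then read off the jets of $c_+,\rho_+$ from the homogeneous constituents of $M_R$ order by order. First I would insert the FIO representations \eqref{e: u_I,R,T FIO rep} of $u_I,u_R,u_T$, with amplitudes $e^m_\bullet = \alpha^m_\bullet N_\bullet + h^m_{\bullet,-1}$, into the two scalar conditions $[\nu\cdot u]=0$ and $[\mathcal N u]=0$ at $\Gamma$, requiring in addition that each of $u_I+u_R$ and $u_T$ solve $\mL u\equiv 0$ near $\Gamma$ (which fixes the normal jets of the amplitudes through the transport equations of Section \ref{s: FIO parametrix}). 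A key simplification is that, since $\nabla_x\Phi$ is parallel to $\nu$ on $\Gamma$, the two $\Phi$-dependent terms of $\mathcal N u$ cancel pointwise there, so $\mathcal N u|_\Gamma = \kappa\,\text{div}(u)\nu|_\Gamma$; hence $\Phi$ never enters the transmission conditions directly and influences $M_R$ only through the interior transport of the amplitudes. With the transmitted phase fixed by Snell's law (the eikonal data on $\Gamma$), matching the leading amplitudes $(\alpha_I)_0,(\alpha_R)_0,(\alpha_T)_0$ through these two conditions gives a $2\times 2$ linear system whose solution is the principal symbol of $M_R$, namely the acoustic reflection coefficient as a function of the incidence covector and of $c_\pm|_\Gamma,\rho_\pm|_\Gamma$.

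Since $c_-=\tilde c_-$, $\rho_-=\tilde\rho_-$ are known on $\Oo_-$ and $M_R\equiv\tilde M_R$ on $T^*\Sigma_q^-$, I then equate principal symbols. The reflection coefficient depends nontrivially on the incidence angle: its value at normal incidence fixes the impedance contrast $\rho_+c_+$, while its angular variation is governed by $c_+/c_-$ through Snell's law. Inverting this angular dependence (away from the glancing set) separately recovers $c_+|_\Gamma$ and $\rho_+|_\Gamma$, which is the base case $j=0$.

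For the inductive step, assume $\p_\nu^i c_+ = \p_\nu^i\tilde c_+$ and $\p_\nu^i\rho_+=\p_\nu^i\tilde\rho_+$ on $\Gamma$ for all $i<j$. The order $(-j)$ symbol of $M_R$ is obtained by solving the transmission conditions against the $(-j)$-homogeneous amplitude terms; by the transport equations and the $\Psi$DO calculus of Lemma \ref{l: asymp expansion of P applied to lagrangian distribution}, this symbol depends on the jets of $c,\rho$ up to order $j$ (with $\p_\nu^j c_+,\p_\nu^j\rho_+$ entering linearly), on the lower jets already recovered, and on the jet of $\nabla_x\Phi$ at $\Gamma$ coming from $p_1$ in \eqref{sub-princ_symb}. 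The central point is that the $\Phi$-contribution is already determined: $\nabla_x\Phi$ is continuous across $\Gamma$, so its tangential structure agrees from both sides, and differentiating $\Delta_x\Phi=4\pi G\rho$ in the normal direction expresses each $\p_\nu^{i+2}\Phi_+$ in terms of $\p_\nu^i\rho_+$ and strictly lower normal and tangential derivatives of $\Phi$. Inductively, then, all $\Phi$-jets entering at order $-j$ are functions of the $\rho$-jets already recovered, so the only genuine unknowns in the symbol identity $M_R\equiv\tilde M_R$ at this order remain $\p_\nu^j c_+$ and $\p_\nu^j\rho_+$. Equating the order $(-j)$ symbols and again inverting the angular dependence completes the induction. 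Finally, since the full symbol of $M_T$ is assembled from exactly the same two-sided jets of $c,\rho$ (and of $\nabla_x\Phi$, now pinned to the $\rho$-jets), all of which agree, matching every homogeneous term yields $M_T\equiv\tilde M_T$ on $\Gamma$ modulo smoothing.

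I expect the main obstacle to be controlling the nonlocal potential $\Phi$ inside the inductive step: although $\Phi$ depends globally on $\rho$ and the hypothesis $c=\tilde c,\rho=\tilde\rho$ on $\Oo_-$ does \emph{not} by itself force $\nabla_x\Phi=\nabla_x\tilde\Phi$ near $\Gamma$, the argument must show that only the \emph{jet} of $\nabla_x\Phi$ at $\Gamma$ enters the order $(-j)$ symbol and that this jet is localized in terms of the recovered $\rho$-jets through the Poisson equation together with the continuity of $\nabla_x\Phi$. The supporting structural fact is that $N\cdot p_1 N$ is independent of $\Phi$ (the cancellation noted after \eqref{transport_eq_J}), so $\Phi$ enters $(\alpha)_{-1}$ only through its coupling to the cokernel component $h_{-1}$; isolating this contribution and verifying that the resulting linear map $(\p_\nu^j c_+,\p_\nu^j\rho_+)\mapsto \sigma_{-j}(M_R)$ stays invertible, after the $\Phi$- and lower-jet data are fixed, is the technical heart of the proof. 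A secondary check is the nondegeneracy of the angular inversion, which requires working microlocally away from glancing covectors, as already assumed for the FIO parametrix.
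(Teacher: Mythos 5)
Your overall architecture --- matching the homogeneous constituents of $\sigma(M_R)$ order by order via the transmission conditions and the transport equations, then inverting the angular dependence --- is the paper's strategy, and your base case $j=0$ is essentially Proposition \ref{Prop_recovery_0}. But your treatment of $\Phi$ has a genuine gap. You propose to pin the jet of $\nabla_x\Phi$ at $\Gamma$ by combining continuity of $\nabla_x\Phi$ across $\Gamma$ with normal differentiation of $\Delta_x\Phi = 4\pi G\rho$. The Poisson equation only expresses $\p_\nu^{i+2}\Phi$ in terms of $\rho$-jets and \emph{lower} derivatives of $\Phi$; the base datum $\nabla_x\Phi|_\Gamma$ is not local in $\rho$ at all (it depends on the still-unknown mass distribution below $\Gamma$), and continuity only relates the two one-sided limits of the \emph{same} potential --- it does not relate $\Phi$ to $\tilde\Phi$. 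Since the hypotheses fix only $c,\rho$ on $\Oo_-$ and say nothing about $\Phi$ there, you cannot conclude $\nabla_x\Phi = \nabla_x\tilde\Phi$ on $\Gamma$; at order $-1$ the symbol identity therefore carries a third unknown, $\nabla_x\Phi - \nabla_x\tilde\Phi$, alongside $\p_\nu(c_+-\tilde c_+)$ and $\p_\nu(\rho_+ - \tilde\rho_+)$, and your two-unknown angular inversion does not close. The paper resolves this in Proposition \ref{Prop_a-1} by a richer angular argument: choosing pairs of covectors with $|\eta'_{(1)}| = |\eta'_{(2)}|$, $\tau_{(1)}=\tau_{(2)}$ and flipped components, and then $\eta' = 0$ with distinct $\tau$, it recovers all components of $\nabla_x\Phi|_\Gamma$ \emph{as an independent unknown} directly from the order $-1$ symbol, and explicitly remarks that continuity of $\Phi$ and $\p_\nu\Phi$ is not needed. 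Relatedly, your claim that the $\Phi$-terms of $\mathcal N$ cancel on $\Gamma$, so that $\Phi$ enters $M_R$ only through interior transport, is at odds with the computation actually used: the term $-\rho_+(\alpha_T)_0\langle N_T,\nabla_x\Phi\rangle\hat\xi_{T,3}$ in the order $-1$ transmission data is precisely what makes $\nabla_x\Phi|_\Gamma$ recoverable.

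Second, your inductive step presumes that the linear map $(\p_\nu^j c_+,\p_\nu^j\rho_+)\mapsto\sigma_{-j}(M_R)$ stays invertible once lower jets and the $\Phi$-data are fixed. The paper's computation shows this \emph{fails} at $j=2$: because of the $\p_{x_3}(h_T)_{-1}$ contribution, $\p_\nu^2\log c_+$ and $\p_\nu^2\log\rho_+$ enter $(\alpha_R)_{-2}$ in a degenerate combination that cannot be separated by varying the incidence covector. This is why Proposition \ref{Prop_a-2} assumes $c=\tilde c$ in all of $\Oo$ --- the higher jets of $c$ are supplied by the interior travel-time/lens-rigidity step of the main theorem, not by $M_R$ --- and then recovers only $\p_\nu^{|J|}\rho_+$ and $\p_\nu^{|J|}\Phi_+$ (the latter via the Poisson equation, as you intended, but only after $\nabla_x\Phi|_\Gamma$ has been independently determined at order $-1$). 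Finally, the paper needs a separate inductive bookkeeping for the nonlocal term: at step $J$ the symbol expansion of $\rho B\rho$ contributes only $\rho,\nabla_x\rho,\dots,\nabla_x^{|J|-2}\rho$, already known from earlier steps, so $B(x,D)$ is harmless --- but this must be verified via Lemma \ref{l: asymp expansion of P applied to lagrangian distribution}, not assumed.
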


The proof does not simply follow from the arguments in \cite{BHKU_1} since we have additional terms in the transmission conditions which need to be disentangled.
We leave the proof to the appendix but compute the principal symbol of $M_R$ and $M_T$, and verify that it is elliptic under the right conditions. In addition, the symbols of $M_R$ will be diagonal since we showed in section \ref{s: FIO parametrix} that $\mathcal N u$ is a multiple of $\nu$, despite the extra terms from the gravitational field.

To compute the principal symbol of $M_R$, we take 
\[
u_\bullet = e^{i\phi_\bullet} \alpha_\bullet N_\bullet
\]
and $u = u_I + u_R + u_T$, each $u_\bullet$ being supported either in $\mathcal O_+$ or $\mathcal O_-$. For convenience, we use local coordinates such that $\Gamma = \{x_3 = 0\}$. Denote $\xi_{I,3} = \sqrt{c_-^{-2}\tau^2 - |\eta'|^2}$, $\xi_{R,3} = -\xi_{I,3}$
and $\xi_{T,3} = \sqrt{c_+^{-2}\tau^2 - |\eta'|^2}$. Also $\hat \xi_{\bullet,3} =\xi_{\bullet,3}/|\eta'|^2 $.
Next, we apply the transmission conditions \ref{e: trans conditions}.
First, $[u \cdot \nu ]^+_- = 0$ gives 
\begin{align*}
&[\alpha_I N_I + \alpha_R N_R] \cdot \nu
= \alpha_T N_T \cdot \nu
\\
&(\alpha_I - \alpha_R) \langle N_I,\nu \rangle = \alpha_T \langle N_T, \nu \rangle
\\
&(\alpha_I - \alpha_R) \hat \xi_{I,3} = \alpha_T \hat \xi_{T,3}
\end{align*}
Applying the second transmission condition $[\mathcal{N}u]^+_-=0$ to leading order terms, we obtain
\begin{align*}
\nu \kappa_- i (\nabla \phi_I \cdot N_I)\alpha_I
+ \nu \kappa_- i (\nabla \phi_R \cdot N_R)\alpha_R
&= \nu i \kappa_+(\nabla \phi_T \cdot N_T) \alpha_T
\\
\mbox{and} \qquad
\nu \kappa_- i|\xi_I|\alpha_I + \nu \kappa_- i |\xi_R| \alpha_R
&= \nu \kappa_- |\xi_{T,3}| \alpha_T,
\end{align*}
where $N_\bullet$ is the Neumann operators \eqref{e: trans geometric neumann} corresponding to $\bullet=I,R,T$.
Thus, the transmission conditions to leading order may be written as a matrix equation
\[
\bmat{\hat \xi_{I,3} & \hat \xi_{T,3} \\
-\kappa_-|\xi_I| & \kappa_+ |\xi_T|}\col{(\alpha_R)_0 \\ (\alpha_T)_0}
= \col{\hat \xi_{I,3} \\
\kappa_+ |\xi_I|} (\alpha_I)_0
\]
and after some algebra
\[
(\alpha_R)_0
= \frac{\xi_{I,3}\rho_+ - \xi_{T,3}\rho_-}{
\xi_{I,3}\rho_+ + \xi_{T,3}\rho_-
}(\alpha_I)_0
\qquad (\alpha_T)_0 
= \frac{2\kappa_- \xi_{I,3}}
{\kappa_+\frac{c_-}{c_+}\xi_{I,3} + \kappa_- \frac{c_+}{c_-}\xi_{T,3}} (\alpha_I)_0
\]
which gives the principal symbols
\begin{equation}\label{e: princ symbol M_R and M_T}
\sigma_0(M_R)(t,x',\tau,\eta')
= \frac{\xi_{I,3}\rho_+ - \xi_{T,3}\rho_-}{
\xi_{I,3}\rho_+ + \xi_{T,3}\rho_-
} \otimes I, \qquad \sigma_0(M_T)(t,x',\tau, \eta')
= \frac{2\kappa_- \xi_{I,3}}
{\kappa_+\frac{c_-}{c_+}\xi_{I,3} + \kappa_- \frac{c_+}{c_-}\xi_{T,3}} \otimes I. 
\end{equation}
Note that this construction of $M_R$ on $\Gamma_-$ and $M_T$ on $\Gamma_+$, and a similar construction can be made for the opposite sides of the interface.
Hence $M_T$ is elliptic no matter the values of $\rho$ and $c$. However, we see that $M_R$ may fail to be elliptic, but let us show that this can happen at only one particular incidence angle, which we can just avoid in our scattering control construction. 

If $\sigma_0(M_R) = 0$, then $\xi_{I,3}\rho_+ =\xi_{T,3}\rho_-$ and after factoring out $\tau$, denoting $\hat \eta' = \eta'/\tau$, we get
\[
\rho_+\sqrt{c_-^{-2} - |\hat \eta'|^2} =  \rho_-\sqrt{c_+^{-2} - |\hat \eta'|^2}.
\]
Solving for $|\hat \eta'|$ we obtain
\begin{equation}\label{e: Brewster angle}
|\hat \eta'|
= \sqrt{\left|\frac{(\rho_+c_+)^2-(\rho_-c_-)^2}{(c_+c_-)^2(\rho_+^2-\rho_-^2)}\right|},
\end{equation}
which is an analog of the Brewster angle in light scattering. The lower order terms in the classical expansion of the symbol of $M_{R/T}$ is in the appendix along with the proof of the proposition. Hence, for $\hat \eta'$ satisfying either the equation above or in a glancing set, we cannot make the scattering control parametrix analogous to the one in \cite{CHKUElastic} so we will avoid these bad codirections. It is convenient to denote this degenerate set as
\[
\mathcal D := \{ (t,x,\tau,\eta) \in T_{\R \times \Gamma}^*(\R \times \Omega) \cap \Sigma:
c_-^{-2} - |\hat \eta'|^2 = 0 \text{ or } c_+^{-2} - |\hat \eta'|^2 = 0 \text{ or } \hat \eta' \text{ satisfies } \eqref{e: Brewster angle} \}
\]

\section{Interior determination and proof of the main theorem} \label{sec: proof of main theorem}
In this section, we prove the uniqueness of the piecewise-smooth parameters $\rho$ and $\mu$, in the interior of a domain $\Omega\subset\R^n$, $n\geq 3$, with disjoint smooth interfaces. We start with a key proposition that lets us generate virtual sources in the interior of $\Omega$ using the outside measurement map.

\begin{Proposition}\label{Prop SC tail}
	For a given distribution $v$ with $WF(v) = \{(x,r\xi) :r\in\R_{+} \}$ for some $(x,\xi) \in S^*\Omega_{\tau}$, there exists a Cauchy data $h_{\infty}$ supported in $\Upsilon \setminus\overline{\Omega}$ such that, for a large time $T>0$, $WF(\mathcal{F}_{T+s}h_{\infty}-\mathcal{F}_s v) = \emptyset$, for any $s\geq 0$.
\end{Proposition}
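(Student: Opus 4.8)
The plan is to split the statement into two parts: a soft reduction from ``all $s\ge 0$'' to the single time $T$, and then a microlocal scattering--control construction realizing the focusing at time $T$. For the reduction, recall that the Cauchy problem \eqref{IVP} is autonomous and well posed, so the solution operators $\{\mathcal F_t\}$ form a one-parameter group on Cauchy data. Moreover, since the nonlocal term $\rho B\rho = \rho(\nabla S)\rho$ is a pseudodifferential operator of order $0$, it neither enlarges wavefront sets nor alters the principal symbol of $\mL$; in particular the forward problem maps $C^\infty$ data to $C^\infty$ solutions, so $\mathcal F_s$ preserves the property $WF=\emptyset$. Writing $\mathcal F_{T+s}h_\infty-\mathcal F_s v=\mathcal F_s(\mathcal F_T h_\infty-v)$, I therefore only need to produce exterior Cauchy data $h_\infty$, supported in $\Upsilon\setminus\overline\Omega$, with $\mathcal F_T h_\infty-v\in C^\infty$; the claimed statement for all $s\ge 0$ then follows automatically.

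The key enabling fact for the construction is a microlocal substitute for finite speed of propagation, which genuinely fails here because of $S$. The same order-$0$ observation, combined with H\"ormander's propagation of singularities theorem applied to $\mL$ (whose principal part is the acoustic operator \eqref{princ_symb}), shows that singularities of any solution travel along the broken bicharacteristics of the acoustic symbol, reflecting and transmitting at the interfaces $\Gamma$ according to Snell's law, with the splitting governed by the operators $M_R,M_T$ of Proposition \ref{Main_th_1} and Section \ref{Sec_reflection_transmission}. Thus, although $\mathcal F_t$ does not respect supports, it does respect $WF$, and the entire scattering--control machinery can be run purely at the level of wavefront sets.

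To build $h_\infty$, note that $(x,\xi)\in S^*\Omega_\tau$ lies below the level set $\Sigma_\tau$, and the extended convex foliation (Definition \ref{foliation_defn}) precludes trapped rays; hence for $T$ large enough (consistent with $\mathrm{dist}_g(\Omega,\partial\Upsilon)>2T$) every branch of the backward broken bicharacteristic through $(x,\xi)$ reaches $\Upsilon\setminus\overline\Omega$. I would then realize the focusing by the microlocal layer-stripping scattering control of \cite{BHKUdensityInterior,CHKUElastic,CHKUControl}, adapted to $WF$: start from a single incoming packet in $\Upsilon\setminus\overline\Omega$, microsupported away from the degenerate set $\mathcal D$ (glancing directions and the Brewster direction \eqref{e: Brewster angle}) and normalized through the elliptic transmission operator $M_T$ so that its forward evolution deposits the correct single ray at $(x,\xi)$ at time $T$; then iteratively cancel the spurious reflected ``tails'' produced at each interface crossing by catching their exterior traces, time-reversing, and re-injecting the correction, using the ellipticity of $M_T$ and $M_R$ off $\mathcal D$ to solve for each correction microlocally. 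The convex foliation forces every spurious branch eventually to leave $\Omega$, so each tail is accessible in the exterior; the resulting series converges modulo $C^\infty$ as in the cited works, and its sum $h_\infty$ satisfies $\mathcal F_T h_\infty=v$ modulo $C^\infty$.

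The main obstacle is exactly the loss of finite speed of propagation caused by the nonlocal operator $S$: the support-based localization underlying classical scattering control is unavailable, so every step of the tail-removal iteration, and in particular its convergence, must be justified microlocally rather than through energy cut off to expanding light cones. Once propagation of singularities for $\mL$ is in hand (which reduces to the order-$0$ nature of $B$), this is the only genuinely new difficulty, after which the geometric control from the convex foliation and the ellipticity of $M_R,M_T$ away from $\mathcal D$ carry the argument through as in the local case. A secondary technical point I expect to track carefully is keeping all packets microsupported away from $\mathcal D$ throughout the iteration, so that the reflection and transmission operators remain invertible at every stage.
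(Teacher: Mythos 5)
Your proposal is correct and takes essentially the same route as the paper, whose proof simply invokes \cite[Proposition 5.3]{CHKUElastic} after recording precisely the points you make: the nonlocal term $\rho B \rho$ and the $\Phi$ terms are of order zero and so leave the principal symbol (hence propagation of singularities) unchanged, the operators $M_R, M_T$ are elliptic away from the degenerate set $\mathcal D$ (so the dense set $\mathcal S$ of admissible codirections from \cite[Lemma 5.9]{CHKUElastic} is replaced by $\mathcal S \setminus \mathcal D$), and the extended convex foliation drives every branch out of $\Omega$. Your write-up merely unpacks the cited microlocal scattering-control iteration that the paper references wholesale.
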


\begin{proof}
This proof is identical to the one in \cite[Proposition 5.3]{CHKUElastic}, which was done in the elastic case. The proof only requires the extended convex foliation condition and that the operators $M_T, M_R$ are elliptic in the non-glancing sets. The set of allowable codirections there where the proposition holds was denoted $\mathcal S$ and it is dense within a certain set described in \cite[lemma 5.9]{CHKUElastic}. By replacing $\mathcal S$ with $\mathcal S \setminus \mathcal D$, that lemma continues to hold since $M_T$ and $M_R$ are elliptic at each point in $\mathcal S \cap T^*_{\R \times \Gamma}(\R \times \Gamma)$ modulo a measure zero set. Since $B(x,D)$ and the $\Phi$ terms does not affect the principal symbol of $\mL$, the proof in \cite[Proposition 5.3]{CHKUElastic} goes through without change.
\end{proof}

\subsection{Proof of Theorem \ref{Main_th_2}}

The structure of the layer stripping proof follows \cite{BHKUdensityInterior}, but we take extra care to deal with the new terms that involve $\Phi$ and $S(\rho u)$.
To recover the density $\rho$, we dig deep into the transport equations along the bicharacteristic curves of $\mL$ and using local injectivity of the geodesic ray transform \cite{UVLocalRay,SUVlocaltensor} together with the elliptic equations that $S(\rho u)$ and $\Phi$ satisfy, we stably determine the coefficients in a subset of $\Omega_{\T}$ near $\Sigma_{\T}$.
We proceed deeper inside $\Omega$ by an iterative method and using the strict convex foliation of the domain $\Omega$.

\begin{proof}[Proof of Theorem \ref{Main_th_2}]
The proof is by contradiction. Suppose $c\neq \tilde c$ or $\rho \neq \tilde \rho$, and let $f= |c - \tilde c|^2 + |\rho - \tilde \rho|^2 + |\Phi- \tilde \Phi|^2 $. Now consider $S:= \Omega \cap \text{supp} f$, and take $\T = \min_S \sig$: 
so $c = \tilde c$,  and $\rho = \tilde \rho$ and $\Phi = \tilde \Phi$ above $\Omega_{\T}$, but by compactness there is a point $x \in \Sigma_{\T} \cap S$. The condition that $\sig^{-1}(\tilde {\T})$ has measure zero rules out the trivial case $\T = \tilde \T$. The outside measurement map lets us recover wave fields outside $\Omega$, so the first part of the proof is to show that we can recover wave fields
outside $\Omega_{\T}$.

Let us now consider a small neighborhood of $x$, denoted $B_x$, and we consider the $\T$-boundary distance function $d^{\T}$ restricted to this neighborhood. Since the interfaces are not dense, and we assume convex foliation, we may choose $B_x$ small enough so that all rays corresponding to rays staying completely inside $B_x$ do not reach an interface.

The proof of \cite[Theorem 10.2]{SUV2019transmission} (\cite{CHKUElastic} has a slightly different proof) shows $c = \tilde c$ in some neighborhood of $x$ inside $B_x$ which we keep denoting as $B_x$. The proof there is in the elastic case but it carries over to our nonlocal scalar wave setting since only the principal symbol of the operator is relevant for that proof to recover local travel times of geodesics. Now let $(z_1, \zeta_1) \in \p T^*\Omega_q$ with $z_1 \in B_x$ such that the ray $\gamma=\gamma_{z_1,\zeta_1}$ starting at $(z_1,\zeta_1)$ at time $t=0$ remains inside $B_x$ until it hits $\Omega_{\T}$ again at some point $(z_2,\zeta_2) $ at time $t_2$. Let $\Lambda_t$ be the Lagrangian associated to the bicharacteristic flow of time $t$, but restricted to $B_x$. To describe the Lagrangian $\Lambda_{t}$,
denote $\chi_t : T^*\R^n \to T^*\R^n$ the bicharacteristic flow by $t$ units of time; that is, if $\alpha(t)$ is a smooth bicharacteristic with $\alpha(0) = (y, \eta)$, then $\chi_t(y, \eta) = \alpha(t)$.  We can then write $\Lambda_{t} = \{ (\chi_{t}(y,\eta), y, \eta); (y,\eta) \in T^*B_x\}$.

Let $V \in \mathcal E'(\Omega)$, the space of compactly supported distributions in $\Omega$, with $\text{WF}(V) = \R_+(z_1, \zeta_1)$. As shown in \cite[Proof of Theorem 10.2]{SUV2019transmission} using proposition \ref{Prop SC tail}, we may construct an outgoing wave $u, \tilde u$ in $\Omega_q$ so that near $z_1$, $u(T) \equiv V$ and $\tilde u (T) \equiv V$ at some time $T$.
As in \cite{SUV2019transmission}, $\gamma$ does not hit an interface for either operator.
We now consider two cases, depending on whether $x$ is on an interface or not. Without loss of generality, we assume $x$ is at an interface, and the other case follows by an analogous argument.

 We want measurements of the amplitude of $u,\tilde u$ on $\Sigma_+ \supset \Gamma_+$, while our assumptions only allow measurements on $\Gamma_-$ where we have already recovered the operator. We are essentially treating $\Gamma_+$ (and not $\Gamma_-$ as the boundary for $\Omega_\T$). Fortunately, the difference is given by the transmission operator which we can recover via Proposition \ref{Main_th_1}. Note that since we recovered the speeds, $\gamma = \tilde \gamma$, and $\Lambda_t = \tilde \Lambda_t$.

Next, we want to apply propagation of singularities and the foliation condition to conclude that $u \equiv \tilde u$ in $\overline \Omega_{\T}^c$ since $\mL = \tilde \mL$ in that region modulo the self-gravitation term. 
 Since the operator is nonlocal, we will give the full justification. Let $w = u-\tilde u$ so $w$ is $C^\infty$ in $\overline \Omega^c$ by our assumption $\mathbf F = \tilde{\mathbf F}.$
Then
\[
\mL w = -\mL \tilde u = (\tilde \mL - \mL) \tilde u
= -\rho B(\rho - \tilde \rho) \tilde u \text{ in } \overline \Omega^c_\T
\]
where we also used that $\Phi = \tilde \Phi$ in $\overline \Omega_{\T}^c$.
Next, we use microlocality
\[
\text{WF}(B(\rho - \tilde \rho) \tilde u)
\subset \text{WF}((\rho-\tilde \rho)\tilde u)
\subset \overline T^*\Omega_\T
\]
since $\rho=\tilde \rho$ in $ \overline \Omega^c_\T$.
Thus, $B(\rho-\tilde \rho) \tilde u$ is $C^\infty$ over $\overline \Omega^c_\T$ so we conclude that $\mL w \in C^\infty $ over $\overline \Omega^c_\T$.

Since $w \in C^\infty$ in $\Omega^c$, then by propagation of singularities and the convex foliation condition, we may conclude that $w\in C^\infty$ on $\overline \Omega_\T^c$. This is because if there was some $(x,\xi) \in T^*\overline \Omega_\T^c \cap \text{WF}(w)$ whose associated ray is nonglancing, then by convex foliation, either the forward broken bicharacteristic or the backward bicharacteristic through $(x,\xi)$ enters $\Omega^c$ due to the convex foliation assumption. By propagation of singularities, $w$ would have nonempty singular support outside of $\Omega$, leading to a contradiction. By construction, both $u$ and $\tilde u$ are microsupported outside the glancing set so we conclude that $u \equiv \tilde u$ in $\overline \Omega_\T^c$.

 Next, the same proof as \cite[lemma 5.6]{CHKUElastic} implies $M_R = \tilde M_R$, that is, the reflection operators may be recovered from the outside measurement operator due to the convex foliation condition. Near $(T,z_1)$, by construction we have $u|_{\Gamma_+} = M_T u|_{\Gamma_-}$ and $\tilde u|_{\Gamma_+} = \tilde M_T \tilde u|_{\Gamma_-}$.
By Proposition \ref{Main_th_1} and since $u|_{\Gamma_-} \equiv \tilde u|_{\Gamma_-}$, we conclude $u|_{\Gamma_+} \equiv \tilde u|_{\Gamma_+} \equiv V$ near $(T,z_1)$.

By the construction in proposition \ref{Prop SC tail}, $u$ and $\tilde u$ inside $B = [T,T+t_2] \times B_x$ are given by the forward wave propagator applied to $V$ with wavefront set in $\Sigma$ and can be represented by a Fourier integral operator described in section \ref{Sec_geometric_cond}.

Near $(T+t_2,z_2)$, we have $u|_{\Gamma_-} \equiv M_T u_{\Gamma_+}$ and $\tilde u|_{\Gamma_-} \equiv \tilde M_T \tilde u_{\Gamma_+}$. Since $u \equiv \tilde u$ outside $\bar\Omega_q$ and using Proposition \ref{Main_th_1}, we conclude $u|_{\Gamma_+} \equiv \tilde u|_{\Gamma_+}$ near $(T+t_2,z_2)$ as well.
 Denote $\rho_{\Gamma_+}$ as the restriction of $\rho$ to $\Gamma$, from below, near $z_2$. As shown in \cite[Chapter 5]{DuisFIO}, if $\text{WF}(u)$ over $\Gamma_+$ contains no covectors tangential to $\Gamma$, then $\rho_{\Gamma_+}u$ is also the image of an FIO in $\mathcal{I}^0$ (applied to $V$) associated to a canonical graph Lagrangian $\Lambda \subset T^*(\Gamma_+ \times \R_t) \times T^*B_x$. The Lagrangian can be described using $\chi_t$ from before. We compute $\Lambda = \{ t(y,\eta), -|\eta|_P, d\rho_{\Gamma_+}\chi_{t(y,\eta)}(y,\eta), (y, \eta)\}$ where $t(y,\eta)$ is the time the ray from $(y, \eta)$ hits $\Gamma$ for the first time. It will be convenient to just define $\Phi(y,\eta) =  (t(y,\eta), -|\eta|_c, d\rho_{\Gamma_+}\chi_{t(y,\eta)})$. Let us introduce boundary normal coordinates for $\Gamma$ near $z_2$ with local coordinates $(x',x_n)$, where $\Gamma$ is given by $x_n = 0$. As in \cite[equation (5.5) and (5.6)]{BHKUdensityInterior}, we then have near $(t_2,z_2)$
 \begin{equation}
 \rho_{\Gamma_+}u \equiv \int e^{i \phi^+(t,x',\eta)} a_{+}(t,x',\eta) \hat{V}(\eta) \ d\eta
\end{equation}
and likewise
\begin{equation}
\rho_{\Gamma_+}\tilde u \equiv \int e^{i \tilde \phi^+(t,x',\eta)} \tilde a_{+}(t,x',\eta) \hat{V}(\eta) \ d\eta.
\end{equation}

Since we have recovered $c$ already, then $\phi^+ = \tilde \phi^+$ in $B_x$ since they only depend on the principal symbol. Now we may apply the argument in \cite[proof of theorem 1.1]{BHKUdensityInterior} to conclude 
\begin{equation}\label{e: a_k = tilde a_k for each k}
(a_{+})_k = (\tilde a_{+})_k
\end{equation}
 for each $k=0,-1,-2,\dots$ on $\Lambda$ (where these amplitudes may be taken as functions on $\Lambda$ via a diffeomorphsim \cite[Chapter 25]{Hormander_1}) modulo $S^{-\infty}$. Again, the nonlocal term does not affect the argument as long as $a_+$ has a polyhomogeneous expansion, which it does using the construction in section \ref{Sec_geometric_cond}.

\subsection*{Recovery of the parameters}
We will now use \eqref{e: a_k = tilde a_k for each k} and omit the $+$ subscript to recover the coefficients. First let us observe that the projection $\underline{\pi}$ to the base space $\Omega$ of $\zeta_{\pm}$ are geodesics $\gamma_{\pm}$ in $(\Omega,g_c)$.
Let us select $\zeta_{+}$ such that $\underline{\pi}\zeta_{+}=\gamma_{+}$ does not intersect any other interface between entering and exiting $\Omega_{\T}$.

Let us set $\underline{s}, \overline{s} \in \R$ to be the entering and exiting points of $\zeta_{+}(\cdot)$ in $\Omega_{\T}$. That is, $\zeta_{+} \cap \Omega_{\T}$ is the curve $\{\zeta_{+}(s) : \underline{s} \leq s \leq \overline{s}\}$. To ease notation, we write $f(s)$ in place of $f(t(s),x(s),\xi(s))$ for functions restricted to the bicharacteristic, and also $\xi = \nabla_x \varphi$. Also $x(\underline s) \notin \Omega^o$.
From \eqref{transport_eq_J}, we obtain

\begin{equation}\label{term_a_0}
\quad (\alpha)_0(s) = (\alpha)_0(\underline{s})\sqrt{\frac{\rho(\underline s)c(\underline s)} {\rho(s)c(s)}} \exp\left[\int_{\underline{s}}^{s}-\frac 1 2 \nabla_x\cdot \frac{\xi}{|\xi|}\ dr\right] = \frac{H}{\sqrt \rho},
\end{equation}
where $H = H(s)$ depends on $c$, but not on $\rho(x)$ for $x \in \Omega^o$. It is convenient to denote
\[
g:= \frac{\sqrt \rho}{H} = \frac{1}{(\alpha)_0}
= \frac{\sqrt{\rho(s)c(s)}}
{(\alpha)_0(\underline{s})\sqrt{\rho(\underline s)c(\underline s)}}  \exp\left[\int_{\underline{s}}^{s}\frac 1 2 \nabla_x\cdot \frac{\xi}{|\xi|}\ dr\right].
\]

Next, we consider the compatibility condition \eqref{transport_eq_Interior} for $J = -1$, which reduces to the transport equation
\[
\frac{d}{ds} (\alpha)_{-1}
+ \left[ \frac{d}{ds} \log \sqrt{\rho c} +
\frac 1 2 \nabla_x\cdot \frac{\xi}{|\xi|}\,
 \right] (\alpha)_{-1} = G
\]
where 
\[
G = \frac{-1}{2i\rho c^2|\xi|}
[N B_p h_{-1} + N C_p(\alpha)_0 N + N\cdot (c)_0 N]
\]
To solve this explicitly, first observe that using \eqref{transport_eq_Interior},
\begin{align*}
\frac {d} {ds}
\frac{(\alpha)_{-1}}{(\alpha)_0}
&= \frac{\frac{d}{ds}(\alpha)_{-1}}{(\alpha)_0}
- \frac{(\alpha)_{-1}}{(\alpha)^2_0} \frac{d}{ds}(\alpha)_0
\\
&=\frac{\frac{d}{ds}(\alpha)_{-1}+
\frac{d}{ds} \log \sqrt{\rho c} (\alpha)_{-1} + \frac 1 2 (\nabla_x \cdot N) 
(\alpha)_{-1}}{(\alpha)_0}
\\
&= \frac{G}{(\alpha)_0}
\end{align*}
so we obtain
\[
\frac{(\alpha)_{-1}(s)}{(\alpha)_0(s)} 
=  \int_{\underline s}^s \frac{G}{(\alpha)_0} \ dr
+ (\alpha)_{-1}(\underline s)/(\alpha)_{0}(\underline s)
\]
\[
g(s) (\alpha)_{-1}(s)
= \int_{\underline s}^s g(r)G(r) \ dr
+ (\alpha)_{-1}(\underline s) g(\underline s)
\]

We compute $G$ via the following lemma.
\begin{Lemma} \label{l: calculating G} Assuming $G,g,H$ as above, we have
\begin{align}\label{e: integral of tensor A}
g(\alpha)_{-1} &=\int gG + C \nonumber \\
&= \int \frac{\sqrt \rho}{H}\cdot \frac{-1}{2i\rho c^2|\xi|} \cdot(-H\sqrt \rho)[N^t \cdot A(x) \cdot N] + C
\end{align}
where the rank two tensor $A(x)$ depends only on $x$ and is given by
\begin{multline}\label{e: A(x) formula in lemma}
A(x) = c^2 \nabla_x^2 \log \sqrt \rho 
+ 2\nabla_x \log \sqrt \rho \otimes \nabla_x c^2
\\
- I \Bigg[ c^2 \Delta(\log \sqrt \rho)] 
-c^2|\nabla_x \log \sqrt \rho |^2
+ \nabla_x \log \sqrt \rho \cdot \nabla_x c^2 \Bigg]
\\
- \nabla_x \log \sqrt \rho \otimes \dPhi-   \frac{\nabla_x H}{H}  \otimes \dPhi + (\nabla_x \otimes N)\dPhi \otimes N\\
- (\nabla_x \cdot N)  \dPhi \otimes N
+ \nabla_x^2 \Phi
\\
+I\Bigg[c^2 \frac{|\xi|}{H}\nabla_x \frac{H}{c^2|\xi|} \cdot \dPhi- \nabla_x \log \sqrt \rho \cdot \dPhi + \Delta_x \Phi - c^{-2}|\dPhi|^2 - \rho \Bigg]
\\
+(\text{ terms independent of $\rho$ and $\Phi$})
\end{multline}
\end{Lemma}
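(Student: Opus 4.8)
The plan is to produce the source term $G$ in the transport ODE for $(\alpha)_{-1}$ in closed form, and then to collect its three constituents into the single quadratic form $(-H\sqrt\rho)\,N^t A(x) N$. The integrated identity $g(\alpha)_{-1}=\int gG + C$ with integrating factor $g=1/(\alpha)_0=\sqrt\rho/H$ is already available, so the entire content of the lemma is the identification of the bracket
\[
N B_p h_{-1} + N C_p(\alpha)_0 N + N\cdot(c)_0 = (-H\sqrt\rho)\,N^t A(x) N
\]
and the reading off of $A(x)$. A useful preliminary observation is that the prefactors collapse: since $(\alpha)_0=H/\sqrt\rho$ and $G$ carries the factor $\tfrac{-1}{2i\rho c^2|\xi|}$, the integrand reduces to $gG=\tfrac{1}{2ic^2|\xi|}N^t A(x) N$, so only the displayed form of the bracket must be established. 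The three pieces correspond exactly to the three physical ingredients: the lower-order polarization $h_{-1}$ (acoustic plus gravitational-potential coupling), the first-order symbol $p_1$ in \eqref{sub-princ_symb}, and the nonlocal self-gravitation symbol $b_0$.

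First I would determine $h_{-1}$. Applying $I-\mathrm{Proj}_N$ to the order-zero symbolic equation \eqref{e: symbolic version Pu=0} (with $(a)_1=0$, $(c)_1=0$) and using that $p$ restricted to $N^\perp$ is the invertible multiple $-\rho c^2|\xi|^2\,I$ of the identity on the characteristic set, one solves
\[
h_{-1} = -\frac{1}{\rho c^2|\xi|^2}\,(I-\mathrm{Proj}_N)\,B_p\big((\alpha)_0 N\big).
\]
Substituting this into $N B_p h_{-1}$ produces the genuinely second-order contributions. The Hessian-type terms $c^2\nabla_x^2\log\sqrt\rho$, $2\nabla_x\log\sqrt\rho\otimes\nabla_x c^2$ and their traces in the $I[\dots]$ block come from the purely acoustic part of $p$ together with the $\nabla_x\kappa$ part of $p_1$, exactly as in the density computation of \cite{RachDensity,BHKUdensityInterior}; the new terms $-\nabla_x\log\sqrt\rho\otimes\dPhi$, $-\tfrac{\nabla_x H}{H}\otimes\dPhi$, $(\nabla_x\otimes N)\dPhi\otimes N$ and $-(\nabla_x\cdot N)\dPhi\otimes N$ arise as cross terms between the acoustic symbol and the $\rho\,\xi\otimes\dPhi-\rho\,\dPhi\otimes\xi$ part of $p_1$. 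Throughout, the eikonal equation $\tau^2=c^2|\xi|^2$ and the transport equation \eqref{transport_eq_J} for $(\alpha)_0$ are used to eliminate every derivative of $(\alpha)_0$ in favour of $c,\rho,H,N$; this is precisely what forces the surviving coefficient $A(x)$ to be a function of $x$ alone (the combination $\nabla_x\log(\alpha)_0=\nabla_x\log H-\tfrac12\nabla_x\log\rho$ is the origin of the paired $\tfrac{\nabla_x H}{H}$ and $\nabla_x\log\sqrt\rho$ entries).

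The remaining two pieces are shorter. The term $N C_p(\alpha)_0 N$ is read off from $p_1$ and the second-order expansion in \eqref{e: C_p term}; its acoustic part feeds the same $I[\dots]$ block, while its $\Phi$-part, together with the order-zero symbol of the gravitational-force operator $\ord1$ (which contributes $\rho\nabla_x^2\Phi$, $\nabla_x\rho\otimes\dPhi-\dPhi\otimes\nabla_x\rho$, taken in at the corresponding order of the transport hierarchy), supplies $\nabla_x^2\Phi$, the scalar entries $\Delta_x\Phi-c^{-2}|\dPhi|^2$, and the factor $c^2\tfrac{|\xi|}{H}\nabla_x\tfrac{H}{c^2|\xi|}\cdot\dPhi$. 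Finally the self-gravitation contribution $N\cdot(c)_0$ is evaluated from $(c)_0=\rho^2 b_0(t,x,\p_x\varphi)(\alpha)_0$ with $b_0=\sigma_0(\nabla S)\propto N\otimes N$; since $(N\otimes N)N=N$, this term is a scalar multiple of $\rho^2(\alpha)_0$, and upon dividing by $(-H\sqrt\rho)$ it produces exactly the $-\rho$ entry of the scalar $I[\dots]$ block (using $\Delta\Phi=4\pi G\rho$ to fix the normalization). Collecting the three groups and factoring out the common $-H\sqrt\rho$ yields $N^t A(x) N$ with $A(x)$ as in \eqref{e: A(x) formula in lemma}.

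I expect the main obstacle to be the bookkeeping in $N B_p h_{-1}$: because $h_{-1}$ already contains one application of $B_p$ to $(\alpha)_0 N$, this term carries two derivatives and entangles the acoustic Hessian terms with the $\dPhi$ terms, so one must repeatedly invoke the eikonal relation, the $(\alpha)_0$ transport equation, and identities for $\nabla_x N$ and $\nabla_x\cdot N$ to cancel all dependence on the derivatives of $(\alpha)_0$ and to verify that the residual coefficient is purely $x$-dependent. Confirming that the $\Phi$-dependent cross terms assemble into the clean combination displayed in \eqref{e: A(x) formula in lemma}, rather than leaving phase-dependent remainders, is the delicate endpoint of the calculation, and it is where the precise sign conventions for $p$, $p_1$, and $b_0$ must be tracked with care.
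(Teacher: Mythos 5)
Your proposal reproduces the paper's proof essentially step for step: invert $p$ on the orthogonal complement of $N$ to get $h_{-1} = -\frac{1}{\rho c^2|\xi|^2}\,(I-\mathrm{Proj}_N)B_p\bigl((\alpha)_0 N\bigr)$, compute the three constituents $N\cdot B_p h_{-1}$, $N\cdot C_p (\alpha)_0 N$, and $N\cdot (c)_0$, use the eikonal relation and the $(\alpha)_0$-transport equation to eliminate every derivative of $(\alpha)_0$ (which is exactly how the paired $\frac{\nabla_x H}{H}$ and $\nabla_x\log\sqrt\rho$ entries appear), evaluate the self-gravitation piece from $b_0 \propto N\otimes N$ to get the $-\rho$ entry, and collect everything into $(-H\sqrt\rho)\,N^t A(x) N$ — so this is the same argument the paper carries out in Appendix \ref{a: proving formula for A(x)}. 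The only slip is a swapped attribution that would self-correct on carrying out the computation: in the paper the tensor cross terms $-\nabla_x\log\sqrt\rho\otimes\nabla_x\Phi$, $-\frac{\nabla_x H}{H}\otimes\nabla_x\Phi$, $(\nabla_x\otimes N)\nabla_x\Phi\otimes N$, $-(\nabla_x\cdot N)\nabla_x\Phi\otimes N$ and $\nabla_x^2\Phi$ arise from $N\cdot C_p\bigl((\alpha)_0 N\bigr)$ (the $\p_{\tau,\xi}(ip_1)\cdot\p_{t,x}$ part of $C_p$ acting on $(\alpha)_0 N$), while the scalar block entries $c^2\frac{|\xi|}{H}\nabla_x\frac{H}{c^2|\xi|}\cdot\nabla_x\Phi - \nabla_x\log\sqrt\rho\cdot\nabla_x\Phi + \Delta_x\Phi$ and $-c^{-2}|\nabla_x\Phi|^2$ come from $N\cdot B_p h_{-1}$ (the $P_{1,1}$ and $P_{0,1}$ pieces), which is the reverse of your assignment but does not change the final sum $A(x)$.
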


Since it is a lengthy calculation, we prove the lemma in appendix \ref{a: proving formula for A(x)}.
Similar to \cite{RachDensity}, these line integrals \eqref{e: integral of tensor A}, when taken over bicharacteristics starting at $(\underline x, \underline \xi) \in \p_+ S^*\Omega_{\mathbf q}$, make up a local ray transform of the tensor field $A$ over the family of geodesics associated to the metric $c^{-2} dx^2$. Thus, since we can uniquely recover $(\alpha)_{-1}$ at $\p \Omega_{\mathbf q}$ using proposition \ref{Prop SC tail}, then we recover the local ray transform of $A$ on a dense subset of rays. In other words, there exists an open neighborhood $\mathcal O$ of $x$ within $\Omega_{\mathbf q}$, such that for a dense set of points $(\underline x, \overline x) \in (\p \Omega_{\mathbf q} \cap \mathcal O) \times (\p \Omega_{\mathbf q} \cap \mathcal O)$, we can conclude that
\begin{equation}\label{e: int of B = 0}
\int_ \gamma N^t \cdot B(x) \cdot N \ ds = 0
\end{equation}
where $\gamma$ is the geodesic segment between $\underline x$ and $\overline x$
and 
\begin{multline}\label{e: B(x) formula}
B(x) = \frac{A}{c} - \frac{\tilde A}{c}
\\
= \kappa \Bigg[ \nabla_x \log \sqrt \rho  \otimes \nabla_x \log \sqrt{ \rho}
-\nabla_x \log \sqrt{ \tilde\rho}  \otimes \nabla_x \log \sqrt{\tilde \rho}
\Bigg] - \alpha I
\\
+ 2\left[\nabla_x\log \sqrt{\frac{\rho}{\tilde \rho}} \cdot \nabla_x c \right] I
+ 4 \sigma_{ij}\nabla_x\log \sqrt{\frac{\rho}{\tilde \rho}} \otimes \nabla_x c + 2c^2 \left( \nabla^2_x \log \sqrt{\frac{\rho}{\tilde \rho}}  \right) 
\\
+ 2c \sigma_{ij}\left(\dPhi-\nabla_x \tilde \Phi\right) \otimes V
+ c\nabla_x^2(\Phi - \tilde \Phi)
\end{multline}
where $\sigma_{ij}$ denotes the algebraic symmetrization of a 2-tensor, and
\begin{multline}
\alpha = c \Delta \log \sqrt{\frac{\rho}{\tilde \rho}} - c \left[\left|\nabla_x \log \sqrt \rho \right|^2 - |\nabla_x \log \sqrt {\tilde \rho} |^2  \right]-c(\rho- \tilde \rho)
\\
+c(\Delta_x \Phi - \Delta_x \tilde \Phi)
-c( \nabla_x \log \sqrt \rho \cdot \dPhi -\nabla_x \log \sqrt{\tilde \rho} \cdot \nabla_x \tilde \Phi)
\\ + c\frac{|\xi|}{H}\nabla_x \frac{H}{c^2|\xi|} \cdot ( \dPhi - \nabla_x \tilde \Phi) - c(|\dPhi|^2- |\nabla_x \tilde \Phi|^2), 
\end{multline}
\[
V = - \nabla_x \log \sqrt {\frac{\rho}{\tilde \rho}} - \frac{\nabla_x H}{H}
+ (\nabla_x \otimes N)^t N - (\nabla_x \cdot N) N.
\]

Next, the Saint-Venant operator $W$ is defined on the space of smooth, symmetrix (here, rank-2) tensor fields mapping into the smooth, symmetric (here, rank-4) tensor fields by
\[
(WB)_{i_1i_2j_1j_2} = B_{j_1j_2,i_1i_2} + B_{j_1j_2,i_1i_2}-B_{i_1j_2,i_2j_1} - B_{i_2j_1,i_1j_2}
\]
and $B_{,kl}$ represents differentiation by $x_k$ and $x_l$.
Then $W(\nabla_x^s v) = 0$ where $\nabla_x^s$ is the symmetric gradient. In particular $W(\nabla_x^2 f) = 0$ for any scalar function $f$. 

Using \eqref{e: int of B = 0}, we can apply the argument in \cite{Rach00} and \cite{B_density} that use the local tensor rigidity result of \cite{SUVlocaltensor}, to conclude that there is an open neighborhood $\mathcal O_x$ of $x$ within $\Omega_{\mathbf q}$, such that within this neighborhood, $B = dv$, where $v$ is a smooth 1-tensor that vanishes on $\p \Omega_{\mathbf q} \cap \mathcal O_x$.
 We note that $v$ involves derivatives of $\rho_i(x), \Phi_i(x)$, $i=1,2$ of order at most $1$ since $B$ involves derivatives of $\rho_i, \Phi_i$ of order at most $2$. We can write $dv = \nabla_x \circledS v + R(v),$ where $R(v)$ depends on $v$ and derivatives of $c$. It follows from a short computation that
\[
W(B) = W(dv) = W(\nabla_x \circledS v) + W R(v) = W R(v)
\]
involves derivatives of $\rho_i(x), \Phi_i(x)$ of order at most $3$. We write $T_l u$ for the sum of terms of $u$ that involve the $l$th-order derivatives of $\rho_i, \Phi_i$, $i=1,2$. Then
\[
T_4 W(B) = 0.
\]
From this fact, now we derive a partial differential equation whose solution, involves the differences of $\rho,\tilde \rho$ and $\Phi,\tilde \Phi$.

From the computation in \cite[section 4]{RachDensity}, that terms of the form $W(\nabla_x f \otimes \nabla_x f)(X,X,Y,Y)$ only involve two derivatives of $f$ and will not appear when we compute $T_4 W(B)[e_i,e_i,e_j,e_j]$ where $e_k$ is the standard unit vector in $\R^3$ with $1$ in the $k$th component.
Denote $\beta = \log \sqrt \rho$ and $\tilde \beta = \log \sqrt {\tilde \rho}$.
We sum over $i,j = 1,2,3$ to conclude, given \eqref{e: B(x) formula}, 
\begin{align*}
0  &= \sum_{ij} T_4 W(B)[e_i,e_i,e_j,e_j]
\\
&= T_4 \sum_{ij}
\Big[
\kappa W(\nabla_x \beta \otimes \nabla_x \beta) - \kappa W(\nabla_x \tilde \beta \otimes \nabla_x \tilde \beta) - W(\alpha I)
\Big](e_i,e_i,e_j,e_j)
\end{align*}
\begin{multline}\label{e: elliptic_eq_1}
=\left[\
\D^2_x\log\sqrt{\frac{\rho}{\tilde{\rho}}} +\Delta_x(|\nabla_x\log\sqrt{\rho}|^2 - |\nabla_x\log\sqrt{\tilde{\rho}}|^2)\right]
 \\
+ \D^2_x (\Phi -  \tilde \Phi) -\Delta_x(|\dPhi|^2 - |\nabla_x \tilde \Phi|^2)
\end{multline}

We can extend all the parameters smoothly outside $\Sigma_{\T}$ to obtain a larger open set $\tilde{U}\supset \mathcal O_{x}$ where $\rho=\tilde{\rho}$ and $\Phi = \tilde \Phi$ outside $\overline \Omega_{\T}$ in $\tilde{U}$.

Next, note that $\Delta_x \Phi = k_0 \rho$ for some constant $k_0$ independent of the coefficients.
Set $\beta_{\pm}:=\log\sqrt{\rho}\pm\log\sqrt{\tilde{\rho}}$ and rewrite \eqref{e: elliptic_eq_1} as
\begin{multline}\label{e: elliptic_eq_2}
 \D^2_x\beta_{-} + \Delta_x \nabla_x\beta_{+} \cdot \nabla_x\beta_{-} 
 + k_0 \Delta_x (\rho - \tilde \rho)
  -\Delta_x(|\dPhi|^2 - |\nabla_x \tilde \Phi|^2)= 0, \quad \mbox{in }\tilde{U},
\end{multline}
and $\beta_{-}, (\Phi-\tilde \Phi)=0$ in $\tilde{U}\setminus U$. As a single equation for $\rho$, is nonlocal due to the $\dPhi$ terms and we cannot apply unique continuation results. Instead, let us denote \[Y := \Phi - \tilde \Phi\] so $Y$ satisfies 
\[
\Delta^2 Y = k_0 \Delta(\rho - \tilde \rho).
\]
By writing $\rho = e^{2\log \sqrt \rho}$, we obtain a linear elliptic system of equations for $(\beta_-, Y)$:

\begin{align}
\D^2_x\beta_{-} + &\Delta_x \nabla_x\beta_{+} \cdot \nabla_x\beta_{-} 
 + k_0 \Delta_x g(x)(e^{\beta_-} - 1)
  -\Delta_x h(x)\cdot \nabla_x Y = 0 \nonumber
  \\
  \Delta^2 Y &= k_0 \Delta g(x) (e^{\beta_-} - 1), \quad \mbox{in }\tilde{U},
\end{align}
where $h(x) := \nabla_x \Phi + \nabla_x \tilde \Phi $ and $g(x):= \tilde \rho. $

Due to the assumptions \eqref{e: lispchitz rho assumption}, this system of equations implies
\begin{align} \label{e: f_1 and f_2 Lipschitz inequality}
|\Delta_x^2 \beta_-| &\lesssim f_1(x,\nabla_x (\beta_-, Y), \nabla_x^2 (\beta_-, Y), \nabla_x^3 (\beta_-, Y)) \nonumber
\\
|\Delta^2 Y| & \lesssim f_2(x,\beta_-, \nabla_x \beta_-, \nabla_x^2 \beta_-)
\end{align}
where $f_1(x,p,q,r)$ is locally Lipschitz in the variables $p \in \R^3 \times \R^4$, $q \in \R^9  \times \R^4$, $r \in \R^{27} \times \R^4$, and analogously for $f_2$. The nonlinearity involving the exponential poses no difficulties since we may restrict to an open subset where $|\beta_-|, |Y| \leq 1$, to obtain the Lispchitz estimate \eqref{e: f_1 and f_2 Lipschitz inequality}. This is because for the unique continuation argument, one starts with the assumption that $\beta_-, Y$ are $0$ in some small open set, and we only need to prove that this open joint zero set may be extended. We will now apply unique continuation to this joint elliptic equation. By proposition \ref{prop: elliptic ucp} (elliptic unique continuation), there exists a neighborhood of $x$ such that $Y, \beta_- = 0$ since $Y,\beta_- = 0$ in $\Omega_\mathbf q^c$.

\end{proof}
\subsubsection{Elliptic unique continuation} To obtain an elliptic unique continuation theorem, need a Carleman estimate from \cite{UWElasticCarleman}. Set
$A = \sum_{ij}a_{jk}(x) \p^2_{x_j x_k}$ with $a_{jk}(x) = a_{kj}(x), a_{ij} \in W_{loc}^{1,\infty}(\R^n)$ for $1 \leq i,j \leq n,$ and assume $A$ is elliptic.

Let $r_0 < 1$ and $U_{r_0} = \{ u \in C^\infty_0(\R^n \setminus \{0\}): \text{supp}(u) \subset B_{r_0}\}$ where $B_{r_0}$ is the ball centered at the origin with radius $r_0$. Denote $r = |x|$, $\psi = \exp(r^{-\beta})$ and $ s = s_0 + \tilde c \beta$ with $s_0, \tilde c \in \R.$

\begin{Proposition}
    There exists a positive constant $\beta_0$ such that for all $\beta \geq \beta_0$ and $u \in U_{r_0}$ with $r_0$ small enough, we have
    \begin{equation}\label{e: carleman 2nd order}
    \beta^2 \int r^{-s-\beta-1} \psi^2 (|\nabla u|^2 + |u|^2) dx
    \leq c\int r^{-s}\psi^2 |Au|^2 dx.
    \end{equation}
    The constant $c$ is independent of $\beta$ and $u$
\end{Proposition}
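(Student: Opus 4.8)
The plan is to obtain \eqref{e: carleman 2nd order} as the scalar specialization of the elastic Carleman estimate of \cite{UWElasticCarleman}, organized around the standard conjugation-and-commutator scheme that underlies it. First I would introduce the Carleman weight $\varphi(x) = -\frac{s}{2}\log r + r^{-\beta}$, chosen so that $e^{2\varphi} = r^{-s}\psi^2$, and substitute $v = e^{\varphi}u$, noting $v \in U_{r_0}$ whenever $u$ is. Writing $P_\varphi := e^{\varphi} A e^{-\varphi}$, the right-hand side of \eqref{e: carleman 2nd order} is exactly $\int r^{-s}\psi^2|Au|^2\,dx = \|P_\varphi v\|_{L^2}^2$, so the task reduces to a weighted lower bound for $\|P_\varphi v\|_{L^2}^2$ in terms of $v$ and $\nabla v$. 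Because $\nabla v = e^{\varphi}(\nabla u + u\,\nabla\varphi)$ and $|\nabla\varphi| \sim \beta\, r^{-\beta-1}$ for $r<1$ and $\beta$ large, such a bound converts back to the stated inequality for $u$, provided the weight on the $|v|^2$ part carries enough powers of $\beta$ and $r^{-1}$ to dominate the $v\,\nabla\varphi$ cross term; tracking these powers is routine bookkeeping.

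The engine of the estimate is the commutator positivity produced by the convexity of the weight. I would split $P_\varphi = S + K$ into its formally self-adjoint and skew-adjoint parts in $L^2(dx)$, so that
\[ \|P_\varphi v\|_{L^2}^2 = \|Sv\|_{L^2}^2 + \|Kv\|_{L^2}^2 + \langle [S,K]v, v\rangle_{L^2}. \]
The leading part of the commutator $[S,K]$, at top order in $\beta$, is a second-order operator whose symbol is positive on the characteristic set of $A$ precisely because $r\mapsto r^{-\beta}$ is strongly convex for large $\beta$; this is the Hörmander subellipticity/pseudoconvexity condition, and since $A$ is elliptic it holds for every admissible direction once $\beta \geq \beta_0$ and $r_0$ is small. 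Integrating by parts and using the ellipticity of $A$ to bound $\|\nabla v\|$ by $\|Sv\|$ plus a weighted $\|v\|$, this yields the desired lower bound with gains that are monotone increasing in $\beta$.

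The one place where the regularity hypothesis enters is the control of terms in which a derivative falls on a coefficient $a_{jk}$. Since $a_{jk}\in W^{1,\infty}_{loc}$, every such term is of one order lower than the principal commutator term and is multiplied by $\|\nabla a_{jk}\|_{L^\infty}$; these are absorbed into the positive commutator contribution by taking $\beta$ large and $r_0$ small, exactly as in \cite{UWElasticCarleman}. This bookkeeping, rather than any conceptual difficulty, is the main obstacle: one must verify that the convexity gain (which grows in $\beta$) strictly dominates both the Lipschitz coefficient errors and the cross terms generated when passing between $u$ and $v$, so that the constant $c$ in \eqref{e: carleman 2nd order} can be chosen independent of $\beta$ and $u$. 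Since our scalar, elliptic, $W^{1,\infty}$ operator $A$ is a special case of the system treated in \cite{UWElasticCarleman} (its scalar principal part trivially satisfies their structural assumptions), the cleanest route is to quote their estimate directly after this verification of hypotheses.
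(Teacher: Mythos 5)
Your proposal ultimately lands exactly where the paper does: the paper gives no proof of this proposition at all, stating it as an estimate imported wholesale from \cite{UWElasticCarleman}, so quoting that result after verifying the (scalar, elliptic, $W^{1,\infty}$) hypotheses is precisely the intended argument. Your sketch of the internal mechanism — conjugation by $e^{\varphi}$ with $e^{2\varphi} = r^{-s}\psi^2$, the self-/skew-adjoint splitting, and commutator positivity driven by the convexity of $r^{-\beta}$ for large $\beta$ — is the standard engine behind the cited estimate and is essentially correct, with the one caveat that for merely $W^{1,\infty}_{loc}$ coefficients the identity $\|P_\varphi v\|^2 = \|Sv\|^2 + \|Kv\|^2 + \langle [S,K]v, v\rangle$ must be implemented in integrated-by-parts form so that at most one derivative ever falls on $a_{jk}$, since the naive commutator would produce second derivatives of the coefficients.
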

As shown in \cite{Protter}, such an estimate can be iterated to show that $u, \nabla u, \nabla^2 u, \nabla ^3 u$ can be estimated via $\Delta^2 u$. Thus, we can obtain
\begin{equation}\label{e: carleman 4th order}
  \beta^4 \int r^{-s-6\beta-8} \psi^2 ( |\nabla^3 u|^2+ |\nabla^2 u|^2+|\nabla u|^2 + |u|^2) dx
    \leq c\int r^{-s}\psi^2 |\Delta^2 u|^2 dx.  
\end{equation}

The goal is to prove the following.
\begin{Proposition}\label{prop: elliptic ucp}
Suppose $(\beta_-, Y)$ satisfies the inequality \eqref{e: f_1 and f_2 Lipschitz inequality} on an open set $U$ and $(\beta_-, Y) = 0$ in a non-empty open subset of $U$. Then $(\beta_-, Y) = 0$ in $\Omega$.
\end{Proposition}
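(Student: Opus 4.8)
The plan is to derive Proposition \ref{prop: elliptic ucp} from the fourth-order Carleman estimate \eqref{e: carleman 4th order}, applied to $\beta_-$ and $Y$ \emph{simultaneously}, combined with the standard connectedness argument that upgrades local unique continuation to the global statement. Let $Z$ be the (open, by definition) set of points of the domain near which $(\beta_-,Y)$ vanishes identically; it is nonempty by hypothesis, and since the domain is connected it suffices to show that $Z$ is relatively closed. Assuming the contrary, fix $x_0 \in \partial Z$. Following the classical construction (as in \cite{Protter}), I would center the spherical weight at a point $p \in Z$ close to $x_0$, so that the ball $\{r < \rho_0\}$ with $\rho_0 = \mathrm{dist}(p,\,\Omega\setminus Z)$ lies in $Z$ and touches $\partial Z$. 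Because $(\beta_-,Y)\equiv 0$ for $r<\rho_0$, a cutoff $\chi$ equal to $1$ on $\{r\le b'\}$ and supported in $\{r<b\}$ with $\rho_0 < b' < b < r_0$ produces functions $\chi\beta_-,\chi Y$ that vanish near the origin and (after the routine regularization permitted by the $L^\infty$ bounds \eqref{e: lispchitz rho assumption}) lie in $U_{r_0}$, while $\nabla\chi$ is confined to the outer shell $\{b'\le r\le b\}$, where $\psi=\exp(r^{-\beta})$ is exponentially smaller than on the region of interest.

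The core step is to apply \eqref{e: carleman 4th order} to $u=\chi\beta_-$ and to $u=\chi Y$ and add the two inequalities. Writing $\Delta^2(\chi\beta_-)=\chi\,\Delta^2\beta_- + [\Delta^2,\chi]\beta_-$, and likewise for $Y$, the commutators $[\Delta^2,\chi]$ are third-order operators supported on $\mathrm{supp}\,\nabla\chi\subset\{b'\le r\le b\}$, where the weight $r^{-s}\psi^2$ is bounded by $e^{2b^{-\beta}}$ times a constant; hence they are controlled by $C e^{2b^{-\beta}}\|(\beta_-,Y)\|_{H^3}^2$ and will be shown negligible as $\beta\to\infty$. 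For the genuine right-hand sides I invoke the differential inequalities \eqref{e: f_1 and f_2 Lipschitz inequality}. Using the local Lipschitz property of $f_1,f_2$ — valid after restricting to the set $\{|\beta_-|,|Y|\le 1\}$, which neutralizes the exponential nonlinearity — they give
\[
\int r^{-s}\psi^2 |\chi\,\Delta^2\beta_-|^2\,dx \lesssim \int r^{-s}\psi^2 \sum_{j\le 3}\big(|\nabla^j\beta_-|^2 + |\nabla^j Y|^2\big)\,dx,
\]
\[
\int r^{-s}\psi^2 |\chi\,\Delta^2 Y|^2\,dx \lesssim \int r^{-s}\psi^2 \sum_{j\le 2}|\nabla^j\beta_-|^2\,dx .
\]

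The decisive, and most delicate, point is the coupled absorption. The summed left-hand sides of the two Carleman estimates control exactly $\beta^4\int r^{-s-6\beta-8}\psi^2\sum_{j\le 3}\big(|\nabla^j\beta_-|^2+|\nabla^j Y|^2\big)$, so I must check that every term produced above is of a type already present there. This is precisely where the structure of the system enters: $f_1$ involves $\beta_-$ and $Y$ only up to order three, and $f_2$ involves $\beta_-$ only up to order two, so no derivative of order exceeding three of either unknown ever appears on the right. Since $r<r_0<1$ gives the pointwise bound $r^{-s}\le r_0^{\,6\beta+8}\,r^{-s-6\beta-8}$, each right-hand integral is dominated by $c\,r_0^{\,6\beta+8}\beta^{-4}$ times the corresponding left-hand integral; as $r_0<1$ this prefactor tends to $0$, so for $\beta$ large the right-hand sides are absorbed into half the left-hand sides. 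This interlocking of the two equations — which makes the derivative counts close only after the two estimates are added — is the main obstacle, and it is exactly why the fourth-order estimate controlling three derivatives (rather than the second-order estimate \eqref{e: carleman 2nd order}) is required.

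After absorption, the summed inequality reduces to $\tfrac12(I_{\beta_-}+I_Y)\le C e^{2b^{-\beta}}\|(\beta_-,Y)\|_{H^3}^2$, where, discarding the (favorable) polynomial-in-$\beta$ and $r$ factors, $I_{\beta_-}+I_Y \gtrsim e^{2(b')^{-\beta}}\int_{\rho_0<r\le b'}\big(|\beta_-|^2+|Y|^2\big)\,dx$. Dividing and letting $\beta\to\infty$, the factor $e^{2b^{-\beta}-2(b')^{-\beta}}\to 0$ because $b>b'$ forces $b^{-\beta}<(b')^{-\beta}$; hence $(\beta_-,Y)\equiv 0$ on the shell $\{\rho_0<r\le b'\}$, which extends the vanishing set strictly past $x_0$. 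This contradicts $x_0\in\partial Z$, so $Z$ is closed, and therefore $Z=\Omega$, proving Proposition \ref{prop: elliptic ucp}.
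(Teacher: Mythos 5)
Your proposal follows the same skeleton as the paper's proof: cut off both unknowns, apply the fourth-order Carleman estimate \eqref{e: carleman 4th order} to $\chi\beta_-$ and $\chi Y$ \emph{and sum}, use the Lipschitz inequalities \eqref{e: f_1 and f_2 Lipschitz inequality} (with the restriction to $\{|\beta_-|,|Y|\le 1\}$ neutralizing the exponential nonlinearity, exactly as the paper remarks), absorb the coupled right-hand sides via $r^{-s}\le r_0^{6\beta+8}\,r^{-s-6\beta-8}$ for $r<r_0<1$, and reduce to terms supported on the shell where $\nabla\chi$ lives. Your observation that the derivative counts of $f_1$ (order $\le 3$ in both unknowns) and $f_2$ (order $\le 2$ in $\beta_-$) close only after summing the two estimates is precisely the structural point the paper exploits, and your connectedness formulation of the iteration is a legitimate packaging of the paper's terse ``by iteration'' conclusion.

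There is, however, a genuine error in your endgame: the weight monotonicity is used backwards. Since $\psi=\exp(r^{-\beta})$ is \emph{decreasing} in $r$ and $b>b'$, the supremum of $r^{-s}\psi^2$ on the shell $\{b'\le r\le b\}$ is of size $e^{2(b')^{-\beta}}$, attained at the inner edge $r=b'$; the quantity $e^{2b^{-\beta}}$ you quote is its \emph{infimum}, so the bound ``$C e^{2b^{-\beta}}\|(\beta_-,Y)\|_{H^3}^2$'' for the shell terms is false. With the correct bound, your final ratio is $e^{2(b')^{-\beta}-2(b')^{-\beta}}=1$ at $r=b'$, and the claimed conclusion ``$(\beta_-,Y)\equiv 0$ on $\{\rho_0<r\le b'\}$'' does not follow from the exponential ratio. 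Two standard repairs exist. (i) Conclude vanishing only on $\{r\le b''\}$ for each $b''<b'$: there the genuine gap $(b'')^{-\beta}>(b')^{-\beta}$ yields the decaying factor $e^{2(b')^{-\beta}-2(b'')^{-\beta}}\to 0$, and vanishing on the open ball $\{r<b'\}$ still strictly extends the zero set past the touching point of $\{r=\rho_0\}$ with $\partial Z$, so your contradiction survives. (ii) Do what the paper actually does: after absorption choose $s$ so that $-s-6\beta-8=-\beta$, note that $\tilde\psi(r)=r^{-\beta}\psi^2$ is decreasing, bound the weight on both sides by its common value at the interface radius and cancel it; the shell integrand $g_1+g_2$ is independent of $\beta$, so the leftover polynomial factor in $\beta$ on the left (rather than any exponential ratio) sends the right-hand side to zero as $\beta\to\infty$ and gives vanishing on the full half-ball up to the interface. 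Your proof is correct once the shell estimate is fixed by either route, but as written the decisive limit rests on a weight comparison that fails.
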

\begin{proof}
The proof follows a similar structure to the proof in \cite{UWElasticCarleman}. Suppose $(\beta_-, Y)$ vanishes in a neighborhood of $x_0 \in U$. Without loss of generality, we assume $x_0 = 0$. We set $\tilde r = \min{(r_0, 1/2, \text{dist}(0,\p U))}$. Now let $\chi \in C^\infty_0(\R^3)$ be a cut-off function satisfying $0 \leq \chi \leq 1$, $\chi|_{B_{r/2}} = 1$ and $\text{supp}(\chi) \subset B_{\tilde r}$. Denote $w_1 = \chi \beta_-$ and $w_2 = \chi Y$. Set $e(w) = |\nabla^3 w|^3 + |\nabla^2 w|^2+ |\nabla w|^2 + |w|^2$. Next observe that using \eqref{e: f_1 and f_2 Lipschitz inequality}, we have
\begin{align*}
|\Delta^2 w_1| = |\chi \Delta^2 \beta_- + [\chi, \Delta^2] \beta_-|
&\lesssim
|\chi f_1(x,\nabla_x (\beta_-, Y), \nabla_x^2 (\beta_-, Y), \nabla_x^3 (\beta_-, Y))| + 
|[\chi, \Delta^2] \beta_-|
\\
&\lesssim
| \chi (e(\beta_-)+ e(Y))^{1/2}|
+ |[\chi, \Delta^2] \beta_-|
\\
&\lesssim
(e(w_1)+ e(w_2))^{1/2} + g_1
\end{align*}

where $g_1$ only contains terms that have at least on derivatice of $\chi$ so it is supported in $B_{\tilde r} \setminus B_{\tilde r/2}$.
Likewise, we have
\[
|\Delta^2 w_2|
\lesssim (e(w_1) + e(w_2))^{1/2} + g_2
\]
with $g_2$ supported in $B_{\tilde r} \setminus B_{\tilde r/2}$.

By combining the above estimate with \eqref{e: carleman 4th order}, we obtain
\[
\beta^2 \int r^{-s-6\beta-8}\psi^2 e(w_j) \leq c \int r^{-s} \psi^2 (e(w_1)+e(w_2)) \ dx
+ \int_{B_{\tilde r} \setminus B_{\tilde r/2}} g_j
\]
so combining the estimate for each $j$ gives
\[
\beta^2 \int r^{-s-6\beta-8}\psi^2 (e(w_1) +e(w_2))\leq c \int r^{-s} \psi^2 (e(w_1)+e(w_2)) \ dx
+ \int_{B_{\tilde r} \setminus B_{\tilde r/2}} r^{-s}\psi^2(g_1 + g_2) \ dx.
\]
Since $r < 1$, the $r^{-s} < r^{-2-6\beta - 8}$ so with a large $\beta$, we may absorb the first term on the right hand side into the left hand side:
\[
\beta^2 \int r^{-s-6\beta-8}\psi^2 (e(w_1) +e(w_2))\leq c  \int_{B_{\tilde r} \setminus B_{\tilde r/2}}  r^{-s-6\beta-8}\psi^2 (g_1 + g_2) \ dx.
\]
We may choose $s$ so that $-s-6\beta-8 = -\beta$ and obtain the estimate
\[
\beta^2 \int_{B_{\tilde r}} r^{-\beta}\psi^2 (e(w_1) +e(w_2))\leq c  \int_{B_{\tilde r} \setminus B_{\tilde r/2}}  r^{-\beta}\psi^2 (g_1 + g_2) \ dx.
\]
Since $\tilde \psi(r) := r^{-\beta} \psi^2$ is a strictly decreasing function, we obtain
\[
\beta^2 \tilde \psi(\tilde r/2)\int_{B_{\tilde r/2}}  ((e(w_1) +e(w_2))\leq c  \tilde \psi(\tilde r) \int_{B_{\tilde r} \setminus B_{\tilde r/2}} (g_1 + g_2) \ dx
\]
so we finally get
\[
\int_{B_{\tilde r/2}}  ((e(w_1) +e(w_2))\leq \frac{c}{\beta^2}  \int_{B_{\tilde r} \setminus B_{\tilde r/2}} (g_1 + g_2) \ dx.
\]
By letting $\beta \to \infty$, this shows $\beta_-, Y = 0$ in $B_{\tilde r/2}$. By iteration, even if done infinitely many times, a standard argument proves the proposition.

\end{proof}

\section{Discussion} \label{s: discussion}
This manuscript presents an inverse problem for an acoustic-gravitational system that include a nonlocal operator contribution and piecewise smooth coefficients, but whose principal symbol is the same as a scalar wave equation. We anticipate that an analogous proof would apply to the full isotropic elastic operator with two wave speeds, although it would involve more tedious computations, which we do not pursue in this work.

We also anticipate that our proof will hold for an inverse problem concerning a ``truncated'' gas giant described in \cite{gasGiants2024}.
On gas giant planets, the speed of sound is isotropic and goes to zero at the surface. Geometrically, this would correspond to a Riemannian manifold whose metric tensor has a conformal blow-up near the boundary. If $M$ represents such a planet, with boundary defining function $x$, then a small truncation of the planet is represented by $\Omega = \{ p \in M: x(p) \geq \epsilon\}$ for a small $\epsilon$ so that $\rho, c$ are strictly positive on $\Omega$. We assume that for $\epsilon_0>0$, $\epsilon \geq \epsilon_0$, then $c,\rho,\Phi$ may be measured and known in $M \setminus \Omega$. 
\section{Declarations}
\subsection*{Funding} M.V.d.H. gratefully acknowledges support from the Simons Foundation under the MATH + X program, the National Science Foundation under grant
DMS-1815143, and the corporate members of the Geo-Mathematical Imaging Group at Rice University. S.B. was partly supported by Project no.: 16305018 of the Hong Kong Research Grant Council.

\subsection*{Conflict of interest/Competing interests}
{\bf Financial interests:} The authors declare they have no financial interests.
\\

\noindent {\bf Non-financial interests:} The authors declare they have no non-financial interests.

\appendix

\section{Proof of theorem \ref{Main_th_1}: Interface determination}\label{Sec_interface-determination}
In this section, we prove Theorem \ref{Main_th_1}. 
Let $\Gamma \subset \R^n$ be a bounded, closed, smooth hypersurface and $\Oo$ be a neighbourhood of $\Gamma$ in $\R^n$. Without loss of generality, up to a diffeomorphism, we can take $\Gamma \subset \{x_3=0\}$ and consider $\Oo_{\pm}\subset\{\pm x_3 > 0\}$ to be small neighbourhood of $\Gamma$ on both sides of it.
We call $x_3>0$ to be `above' while $x_3<0$ to be `below' the interface.
We write $x=(x',x_3)$ as local coordinates in $\Oo$ and $\nu= (0,0,1)^t$ as a unit normal vector.
In $\Oo$, we consider the boundary value problem
\begin{equation}\label{BVP}
\begin{aligned}
\mL u =& 0 \quad &&\mbox{in }\R_+\times\Oo,\\
u(t,x') =& h(t,x') \quad &&\mbox{on }\R_+\times\Gamma,\\
u(0,x) = 0 \quad &\p_tu(0,x)=0 \quad&&\mbox{in }\Oo.
\end{aligned}
\end{equation}
 In this section, let use first assume $B(x,D) = 0$ for a clearer exposition, and we will show how to modify the arguments afterwards with $B(x,D)$. 

 We will first recall several facts. Let $\eta \in \R\times\R^2$ be the dual coordinate of $(t,x') \in \R\times\R^2$.
The phase function $\varphi$ is given as a solution of the Eikonal equation
\begin{equation}\label{Eikonal_eq}
|\p_t \varphi|^2 = c^2(x)|\nabla_x\varphi|^2,
\quad \mbox{with the condition}\quad
\varphi|_{x_3=0} = -t\tau + x'\cdot\eta',
\end{equation}
where $c(x):=\sqrt{\frac{\kappa}{\rho}}$ is the piece-wise smooth wave speed in $\Oo$. From the Eikonal equation \eqref{Eikonal_eq} we clearly observe that $\p_t\varphi = -\tau$, $\nabla_{x'}\varphi = \eta'$ at $\R_+\times\Gamma$ and \[\p_{x_3}\varphi = \sqrt{c^{-2}|\p_t\varphi|^2 - |\nabla_{x'}\varphi|^2} = \sqrt{c^{-2}\tau^2 - |\eta'|^2},\qquad \mbox{on }\R_{+}\times\Gamma.\]
For notational convenience let us define $\xi_3 = \sqrt{c^{-2}\tau^2 - |\eta'|^2}$ and note that $\p_{x_3}\varphi = \xi_3$ at $\Gamma$. We also denote $\xi = \nabla_x \varphi$.

 We work with the parametrix
\begin{equation}\label{wave_parametrix}
u(t,x,\tau,\eta') = e^{i\varphi(t,x,\tau,\eta')}(\alpha N + h_{-1})
\end{equation}
introduced earlier.

\subsection{Wave transmission conditions}\label{Sec_reflection_transmission}

Let $u_I$ be the parametrix of an incoming wave-field travelling through $\mathcal{O}_{-}$ \footnote{See Section \ref{Sec_intro_interface_determination} for the definition of $\mathcal{O}_{\pm}$ and $\Gamma_{\pm}$.} towards the interface $\Gamma$. After hitting $\Gamma$, $u_I$ splits into two parts $u_R$ and $u_T$ the reflected and the transmitted wave-fields respectively. The reflected wave-field $u_R$ travels through $\mathcal{O}_{-}$ and $u_T$ travels through $\mathcal{O}_{+}$.
We add the suffix $\pm$ to denote restriction on either sides of $\Gamma$, for instance $c_{\pm}$ denote $c$ restricted on $\overline{\mathcal{O}}_{\pm}$. We write $u_{\bullet}$ to denote $u_I$, $u_R$ or $u_T$, where there is no ambiguity.
Sometimes, for notational convenience, we write $f_{\bullet}$ for a function $f$ in $\mathcal{O}$ to denote $f_{I}=f_R = f_{-}$ and $f_T=f_{+}$.

We denote the Neumann data $\mathcal{N}_{\bullet}$ at $\Gamma_{\pm}$ as
\[ \mathcal{N}_{\bullet} u_{\bullet} := \kappa_\bullet \nu \nabla_x \cdot u_\bullet
- \nu \rho \langle u_\bullet , \dPhi \rangle
+ \rho \dPhi \langle u_\bullet , \nu\rangle 
|_{\Gamma_{\pm}}.
\]
Since $\Gamma\subset \{x_3=0\}$ and $u_{\bullet}=e^{i\varphi_{\bullet}}a_{\bullet}$ a straightforward calculation implies
\[ \mathcal{N}_{\bullet}u_{\bullet}
= e^{i\varphi_{\bullet}} \left[ i\nu \kappa_{\bullet} \alpha_\bullet \langle \nabla_x \varphi_{\bullet}, N_\bullet \rangle 
+\kappa_\bullet \nu  (\nabla_x \cdot  a)
- \nu \rho \langle a_\bullet, \dPhi \rangle
+ \rho \dPhi \langle a_\bullet, \nu \rangle 
\right].
\]

Define \[\xi_{\bullet,3} = \sqrt{c^{-2}|\p_{t}\varphi_{\bullet}|^2 - |\nabla_{x'}\varphi_{\bullet}|^2}\] and note that $\xi_{R,3} = -\xi_{I,3}$. In general the transmission conditions become
\begin{multline}\label{transmission_cond_J}
\hat \xi_{I,3} \alpha_I - \hat \xi_{I,3} \alpha_R
= \hat \xi_{T,3}\alpha_T
+ \langle \nu, h_{T,-1} - h_{I,-1}-h_{R,-1} \rangle 
\\
\nu \kappa_- i|\xi_I|\alpha_I + \nu \kappa_- i |\xi_R| \alpha_R
= \nu \kappa_+ |\xi_{T}|\alpha_T
+ \kappa_+ \nabla_x \cdot (\alpha_T N_T) - \kappa_-\nabla_x \cdot( \alpha_I N_I + \alpha_R N_R)
\\
+ \kappa_+ \nabla_x \cdot h_{T,-1} 
- \kappa_- \nabla_x \cdot (h_{I,-1} + h_{R,-1})
\\
- \nu \langle \rho_+ u_T
- \rho_- u_I - \rho_- u_R, \dPhi \rangle
+\dPhi \langle u_T-u_I-u_R, \nu \rangle 
\end{multline}
 In matrix form, when separating the equations with the same order we have
\begin{multline}\label{e: Jth transmission cond}
M \col{(\alpha_R)_J \\ (\alpha_T)_J}
:=
\bmat{\hat \xi_{I,3} & \hat \xi_{T,3} \\
i\kappa_-|\xi_I| & -i\kappa_+ |\xi_T|}\col{(\alpha_R)_J \\ (\alpha_T)_J}
= \col{\hat\xi_{I,3} (\alpha_I)_J - 
\langle \nu, (h_{T})_J - (h_{I})_J-(h_{R})_J \rangle 
\\
-i\kappa_+ |\xi_I|(\alpha_I)_J + V_J} 
\end{multline}
where we already showed $M$ is invertible away from the critical angle and
\begin{multline}
V_J 
=  \kappa_+ \nabla_x \cdot ((\alpha_T)_{J+1} N_T) - \kappa_-\nabla_x \cdot( (\alpha_I)_{J+1} N_I + (\alpha_R)_{J+1} N_R)
\\
+ \kappa_+ \nabla_x \cdot (h_{T})_{J+1} 
- \kappa_- \nabla_x \cdot ((h_I)_{J+1} + (h_{R})_{J+1})
\\
- \nu \langle \rho_+ (u_T)_{J+1}
- \rho_- (u_I)_{J+1} - \rho_- (u_R)_{J+1}, \dPhi \rangle
\\
+\dPhi \langle \rho_+(u_T)_{J+1}-\rho_-(u_I)_{J+1}-\rho_- (u_R)_{J+1}, \nu \rangle, 
\end{multline}

Since we must have $u_I\restriction_\Gamma = h$ for a fixed $h$ which is the restriction of an incoming wave at $\Gamma_-$, this imposes the boundary conditions of $(a_{I})_J$, $J=0,-1,\dots$. We get
\begin{equation}\label{e: bdy conditions of a_I}
(\alpha_I)_0 = 1 \text{ and }
(\alpha_I)_J = 0 \text{ on } \R_t \times \Gamma.
\end{equation}

\subsection{Recovering the parameters}
In this section we solve the equations \eqref{e: Jth transmission cond}, for $(a_R)_J$ and obtain the full symbol of $M_R$. One can do the same for $M_T$ as well and get that $M_R$, $M_T$ are $\Psi$DOs of order $0$ at $\left(\R_{+}\times\Gamma_{\pm}\right)$.
In order to prove Theorem \ref{Main_th_1} we assume that there are two sets of parameters $(\rho,\kappa)$ and $(\tilde{\rho},\tilde{\kappa})$ smooth in $\mathcal{O}\setminus\Gamma$. We also assume the notational convention of denoting $\tilde{f}$ to be the corresponding quantity for the parameters $(\tilde{\rho},\tilde{\kappa})$ when $f$ is a quantity corresponding to the parameters $(\rho,\kappa)$.

Let us make the assumption that $(a_R)_J = (\tilde{a}_R)_J$, for each $J=0,-1,-2,\dots$.
Our strategy of recovering the coefficients at the interface is as follows.
We use the transmission conditions \eqref{e: Jth transmission cond} to obtain relations between $(a_T)_J$, $(\tilde{a}_T)_J$ and their normal ($\p_{x_3}$) derivatives at $\R_+\times\Gamma_{+}$.
Then we calculate the transport equations for $(a_T)_J$ derived from \eqref{transport_eq_Interior} and relate $\p_{x_3}(a_T)_J$ with higher order derivatives $\p_{x_3}^{|J|+1}(a_T)_0$ and from their we obtain relations between $\left(\p_{x_3}^{|J|+1}c;\p_{x_3}^{|J|+1}\rho\right)$ and $\left(\p_{x_3}^{|J|+1}\tilde{\rho};\p_{x_3}^{|J|+1}\tilde{c}\right)$.

Observe that since $\xi_{I,3} = -\xi_{R,3}$ and \eqref{e: princ symbol M_R and M_T},

\begin{equation}\label{a_0}
(\alpha_R)_0
= \frac{\xi_{I,3}\rho_+ - \xi_{T,3}\rho_-}{
\xi_{I,3}\rho_+ + \xi_{T,3}\rho_-
}(\alpha_I)_0
\qquad (\alpha_T)_0 
= \frac{2\kappa_- \xi_{I,3}}
{\kappa_+\frac{c_-}{c_+}\xi_{I,3} + \kappa_- \frac{c_+}{c_-}\xi_{T,3}} (\alpha_I)_0, \qquad \mbox{at }\R_+\times\Gamma.
\end{equation}
We state our first interface recovery result for recovering $\kappa,\rho$ at $\Gamma$. We have the following result, analogous to \cite[Lemma 2.1]{BHKU_1}, since $\Phi$ does not appear in \eqref{a_0}. 
\begin{Proposition}\label{Prop_recovery_0}
	Let us assume that $\rho=\tilde{\rho}$, $\kappa=\tilde{\kappa}$, $(a_I)_0 = (\tilde{a}_I)_0$ in $\R_+\times\overline{\mathcal{O}}_{-}$.
	If $(a_R)_0 = (\tilde{a}_R)_0$ at $\R_+\times\Gamma$ then $\rho_+ = \tilde{\rho}_+$, $\kappa_{+} = \tilde{\kappa}_{+}$ and $(a_T)_0 = (\tilde{a}_T)_0$ at $\R_{+}\times\Gamma_{+}$.
\end{Proposition}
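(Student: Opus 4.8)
The plan is to read everything off the leading-order reflection coefficient in \eqref{a_0} and to exploit the fact that this coefficient, viewed as a function of the covector $(\tau,\eta')$, simultaneously encodes $\rho_+$ and $c_+$. First I would record the consequences of the hypotheses on the negative side: since $\rho=\tilde\rho$ and $\kappa=\tilde\kappa$ in $\overline{\mathcal O}_-$, the speed $c_-=\sqrt{\kappa_-/\rho_-}$ agrees with $\tilde c_-$ there, so the incoming and reflected phase functions and their directions $N_I,N_R$ coincide with the tilde versions, and in particular $\xi_{I,3}=\tilde\xi_{I,3}=|\tau|\sqrt{c_-^{-2}-|\hat\eta'|^2}$. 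Because $(a_I)_0=(\tilde a_I)_0$ on $\overline{\mathcal O}_-$ and, by the normalization \eqref{e: bdy conditions of a_I}, $(\alpha_I)_0=1$ on $\R_t\times\Gamma$, the hypothesis $(a_R)_0=(\tilde a_R)_0$ at $\Gamma$ forces the scalar amplitudes to agree, $(\alpha_R)_0=(\tilde\alpha_R)_0$ (here $N_R=\tilde N_R$). Dividing \eqref{a_0} by $(\alpha_I)_0=1$ then gives the identity of reflection coefficients
\[
\frac{\xi_{I,3}\rho_+-\xi_{T,3}\rho_-}{\xi_{I,3}\rho_++\xi_{T,3}\rho_-}
=\frac{\xi_{I,3}\tilde\rho_+-\tilde\xi_{T,3}\rho_-}{\xi_{I,3}\tilde\rho_++\tilde\xi_{T,3}\rho_-},
\]
valid for all admissible $(\tau,\eta')$, i.e. those lying outside the degenerate set $\mathcal D$, where the denominators and $\xi_{I,3}$ do not vanish.

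Next I would cross-multiply and simplify. Writing $A=\xi_{I,3}$, $B=\xi_{T,3}$, $\tilde B=\tilde\xi_{T,3}$, the difference of the two cross-products collapses—the $A^2$ and the $B\tilde B$ terms cancel—to $2\rho_-A\,(\rho_+\tilde B-\tilde\rho_+B)=0$. Since $\rho_-\neq0$ and $A\neq0$ off $\mathcal D$, this yields the clean relation $\rho_+\tilde\xi_{T,3}=\tilde\rho_+\xi_{T,3}$, that is,
\[
\rho_+\sqrt{\tilde c_+^{-2}-|\hat\eta'|^2}=\tilde\rho_+\sqrt{c_+^{-2}-|\hat\eta'|^2}.
\]
Squaring and setting $s=|\hat\eta'|^2$ turns this into the polynomial identity $\rho_+^2(\tilde c_+^{-2}-s)=\tilde\rho_+^2(c_+^{-2}-s)$, which holds for $s$ ranging over an open interval. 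Matching the coefficient of $s$ gives $\rho_+^2=\tilde\rho_+^2$, hence $\rho_+=\tilde\rho_+$ by positivity; matching the constant term then forces $c_+=\tilde c_+$, and therefore $\kappa_+=\rho_+c_+^2=\tilde\kappa_+$. Finally, feeding $\rho_+=\tilde\rho_+$ and $c_+=\tilde c_+$ (hence $\xi_{T,3}=\tilde\xi_{T,3}$ and $N_T=\tilde N_T$) back into the transmitted-amplitude formula in \eqref{a_0} gives $(\alpha_T)_0=(\tilde\alpha_T)_0$ and thus $(a_T)_0=(\tilde a_T)_0$ on $\R_+\times\Gamma_+$.

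The computation is elementary; the only points demanding care are bookkeeping ones rather than a genuine obstacle. I would make sure the reflection-coefficient identity is asserted only on the open admissible set (non-glancing and outside $\mathcal D$) so that the divisions and the cancellation of the common factor $A$ are legitimate, and I would invoke the scalar structure $\sigma_0(M_R)=(\,\cdot\,)\otimes I$ from \eqref{e: princ symbol M_R and M_T}—a consequence of $\mathcal N u$ being a multiple of $\nu$ even in the presence of the gravitational terms—to reduce the matrix transmission relation to the single scalar equation above. The essential reason the argument is unobstructed is that the gravitational potential $\Phi$ and the nonlocal term $B(x,D)$ do not enter the principal-order coefficients \eqref{a_0}, so this leading-order interface recovery is identical in form to the purely acoustic case, as anticipated in the discussion preceding the statement.
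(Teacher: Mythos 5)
Your proof is correct and takes essentially the same route as the paper's: both reduce the hypothesis $(a_R)_0=(\tilde a_R)_0$ via \eqref{a_0} to the scalar relation $\rho_+\tilde\xi_{T,3}=\tilde\rho_+\xi_{T,3}$ (the paper's \eqref{key_2}), then exploit the dependence on $|\eta'|/\tau$ to decouple $\rho_+$ from $c_+$ --- your matching of coefficients of the linear polynomial in $s=|\hat\eta'|^2$ is exactly the paper's evaluation at two incidence angles $b\neq\pm b_1$. The only cosmetic differences are the order of recovery ($\rho_+$ before $c_+$ in your version, the reverse in the paper's) and that you spell out the concluding step $(a_T)_0=(\tilde a_T)_0$ from \eqref{a_0}, which the paper leaves implicit.
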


\begin{proof}
Assume $(a_R)_0 = (\tilde{a}_R)_0$ at $\R_+\times\Gamma_{-}$ and $(\kappa_-,\rho_-)|_{\Gamma_-} = (\kappa_+,\rho_+)|_{\Gamma_-}$.
From \eqref{a_0} we get
\begin{equation}\label{key_1}
\frac{\rho_{-}\left(\xi_{I,3}/\xi_{T,3}\right) - \rho_{+}}{\rho_{-}\left(\xi_{I,3}/\xi_{T,3}\right) + \rho_{+}}
= \frac{\rho_{-}\left(\xi_{I,3}/\tilde{\xi}_{T,3}\right) - \tilde{\rho}_{+}}{\rho_{-}\left(\xi_{I,3}/\tilde{\xi}_{T,3}\right) + \tilde{\rho}_{+}}, \qquad \mbox{at }\R_+\times\Gamma.
\end{equation}
Let us write $f=\xi_{I,3}/\xi_{T,3}$ and subsequently $\tilde{f}=\xi_{I,3}/\tilde{\xi}_{T,3}$.
From \eqref{key_1} we get
\begin{equation}\label{key_2}
\rho_-\tilde{\rho}_{+}f = \rho_-\rho_{+}\tilde{f}
\quad\Leftrightarrow\quad
\frac{\rho_{+}}{\tilde{\rho}_{+}} = \frac{\tilde{f}}{f}.
\end{equation}
Since $\frac{\tilde{f}}{f}$ is a function of $(x',\frac{|\eta'|}{\tau})$ and
$\rho_{+}$, $\rho{\mu}_{+}$ is a function of $x$ only, therefore, choosing a different value $f_1$ of $f$ by varying $\frac{\eta'}{\tau}$ we get
\[
\frac{\tilde{f}}{f} = \frac{\tilde{f}_1}{f_1}
\quad\Leftrightarrow\quad
\left(\tilde{c}^{-2}_{+} - c^{-2}_{+}\right)(b^2-b_1^2) = 0,
\]
where $b$, $b_1$ are different values of $\frac{|\xi'|}{\tau}$. Therefore, choosing $b \neq \pm b_1$, we have $c_+ = \tilde{c}_+$ on $\Gamma_+$ and thus, from \eqref{key_2} we have $\rho_+ = \tilde{\rho}_+$ on $\Gamma_{+}$.
\end{proof}

Thus we recover $(a_\bullet)_0$ and $(\rho,\mu)$ on $\R_+\times\Gamma_{+}$.
Now, to proceed further, we calculate the lower order terms of the asymptotic expansion of $a_R$.
Here we take the opportunity to define the notation $R_J$ \emph{denotes the terms on $\R_{+}\times\Gamma_{\pm}$ which are determined from the fact $(a_R)_J = (\tilde{a}_R)_J$ at $\R_{+}\times\Gamma_{-}$}. For instance, $\rho$, $\kappa$, $(a_T)_0$ and their tangential derivatives are $R_0$ terms (see Proposition \ref{Prop_recovery_0}). Also since $\rho,\kappa$ are known on $\mathcal{O}_{-}$ (see Theorem \ref{Main_th_1}), we have $\p_{x_3}(a_R)_0$,$\p_{x_3}(a_I)_0$ are also $R_0$ terms. Similarly, we will see that $\p_{x_3}^J \rho_+, \p_{x_3}^J c_+, \p_{x_3}^J \Phi_+$ are $R_J$ terms. Write $F\sim R_J$ to denote $F$ is a $R_J$ term.

We state the following result regarding the derivatives of the parameters, which is similar to \cite[Lemma 2.2]{BHKU_1} and will be exactly same if $A=0$ at $\Gamma$.
\begin{Proposition}\label{Prop_a-1}
	If $(a_R)_{-1} = (\tilde{a}_R)_{-1}$ on $\R_{+}\times\Gamma_{-}$, then
\[ 	\p_{x_3}\rho = \p_{x_3}\tilde{\rho}, \quad
	\p_{x_3}c = \p_{x_3}\tilde{c}\quad \mbox{and}\quad
 \dPhi = \nabla_x \tilde \Phi
	  \quad \mbox{on}\quad\Gamma_{+}.
\]
\end{Proposition}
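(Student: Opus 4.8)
The plan is to run the scheme of Proposition \ref{Prop_recovery_0} one order lower: I extract from the order $-1$ transmission conditions a single scalar identity, valid for every incidence codirection $(\tau,\eta')$, that is linear in the three unknown jets $\partial_{x_3}\rho_+$, $\partial_{x_3}c_+$ and $\langle\nabla_x\Phi,\nu\rangle|_{\Gamma_+}$, and then separate variables by varying the incidence angle. First I would specialize \eqref{e: Jth transmission cond} to $J=-1$ and invoke \eqref{e: bdy conditions of a_I}, namely $(\alpha_I)_{-1}=0$ on $\Gamma$, so that the system reads
\[
M\col{(\alpha_R)_{-1}\\ (\alpha_T)_{-1}}=\col{-\langle\nu,(h_T)_{-1}-(h_I)_{-1}-(h_R)_{-1}\rangle\\ V_{-1}},
\]
where $V_{-1}$ collects the order-$0$ divergence terms together with the two $\nabla_x\Phi$-terms inherited from $\mathcal N$. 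This is the order at which $\nabla_x\Phi$ first enters: it is absent from the reflection relation \eqref{a_0} but present in $V_{-1}$.

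Next I would subtract the untilde and tilde systems. By Proposition \ref{Prop_recovery_0} and the hypotheses in $\overline{\mathcal{O}}_{-}$, all order-$0$ amplitudes and the boundary values $\rho_+,\kappa_+,c_+$ coincide, so $M=\tilde M$, and the incoming and reflected correctors satisfy $\delta(h_I)_{-1}=\delta(h_R)_{-1}=0$ at $\Gamma$ since they are built only from data on the known side $\mathcal{O}_-$ (writing $\delta f=f-\tilde f$). Using the hypothesis $\delta(\alpha_R)_{-1}=0$, the two rows reduce to $\hat\xi_{T,3}\,\delta(\alpha_T)_{-1}=-\langle\nu,\delta(h_T)_{-1}\rangle$ and $-i\kappa_+|\xi_T|\,\delta(\alpha_T)_{-1}=\delta V_{-1}$. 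Eliminating the still-unknown transmitted amplitude $\delta(\alpha_T)_{-1}$ produces a single scalar relation
\[
\frac{i\kappa_+|\xi_T|}{\hat\xi_{T,3}}\,\langle\nu,\delta(h_T)_{-1}\rangle=\delta V_{-1},\qquad \text{for all }(\tau,\eta').
\]
This is the order $-1$ analogue of \eqref{key_1}: the transmitted amplitude plays the role that $(\alpha_T)_0$ did before and is eliminated rather than recovered at this stage.

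The third step is to make both sides explicit in the unknown jets. The corrector $(h_T)_{-1}$ is fixed algebraically by $p$ acting in the range direction $N^\perp$, so $\delta(h_T)_{-1}$ is linear in $\partial_{x_3}(\alpha_T)_0$ and $\partial_{x_3}N_T$; likewise the divergence terms in $\delta V_{-1}$ carry $\partial_{x_3}(\alpha_T)_0$ and $\partial_{x_3}N_T$, while its remaining terms contribute $\langle\nabla_x\delta\Phi,\nu\rangle$ multiplied by the known order-$0$ amplitudes. Here I would use the transport equation \eqref{transport_eq_J}: since the transmitted bicharacteristic is transversal to $\Gamma$, the operator $\frac{d}{ds}$ has a nonzero $\partial_{x_3}$-component, so solving \eqref{transport_eq_J} for $\partial_{x_3}(\alpha_T)_0$ at $\Gamma_+$ expresses it through $\partial_{x_3}\log\sqrt{\rho_+c_+}$ and known tangential data; and differentiating $\partial_{x_3}\varphi_T=\xi_{T,3}=\sqrt{c_+^{-2}\tau^2-|\eta'|^2}$ turns $\partial_{x_3}N_T$ into an expression in $\partial_{x_3}c_+$. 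After these substitutions the scalar relation takes the form
\[
P_1(\tau,\eta')\,\delta\partial_{x_3}\rho_+ + P_2(\tau,\eta')\,\delta\partial_{x_3}c_+ + P_3(\tau,\eta')\,\langle\nabla_x\delta\Phi,\nu\rangle=0,\qquad \text{for all }(\tau,\eta').
\]

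Finally I would conclude as in Proposition \ref{Prop_recovery_0}: since the jets depend only on $x'\in\Gamma$ while $P_1,P_2,P_3$ are explicit algebraic functions of the reduced slowness $\hat\eta'=|\eta'|/\tau$, it suffices that $P_1,P_2,P_3$ be linearly independent as functions of $\hat\eta'$; varying the incidence angle then forces $\delta\partial_{x_3}\rho_+=\delta\partial_{x_3}c_+=\langle\nabla_x\delta\Phi,\nu\rangle=0$, and since $\nabla_x\Phi$ is purely normal at the interface (Section \ref{s: natural transmission}), matching the normal component upgrades this to $\nabla_x\Phi=\nabla_x\tilde\Phi$ on $\Gamma_+$, giving the claim together with $\partial_{x_3}\rho=\partial_{x_3}\tilde\rho$ and $\partial_{x_3}c=\partial_{x_3}\tilde c$. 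The main obstacle is exactly this linear-independence check: unlike in \cite{BHKU_1}, a third unknown $\langle\nabla_x\Phi,\nu\rangle$ is present, and one must verify that its coefficient $P_3$ has an angular profile genuinely distinct from $P_1,P_2$. Structurally this is expected, since $\nabla_x\Phi$ enters $V_{-1}$ as an undifferentiated zeroth-order multiplier while $\partial_{x_3}\rho_+$ and $\partial_{x_3}c_+$ enter through $\partial_{x_3}$ of the amplitude and phase, carrying different powers of $\xi_{T,3}$; but ruling out an accidental resonance among the three profiles, away from the excluded glancing and Brewster codirections $\mathcal D$, is the delicate point of the argument.
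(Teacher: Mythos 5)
Your setup is faithful to the paper's own route: specializing \eqref{e: Jth transmission cond} to $J=-1$, using $(\alpha_I)_{-1}=0$ from \eqref{e: bdy conditions of a_I}, and eliminating $\delta(\alpha_T)_{-1}$ from the two rows is algebraically the same as the paper's direct inversion of $M$ (its proof writes $(\alpha_R)_{-1}=-M_{11}^{-1}\langle\nu,(h_T)_{-1}\rangle+M_{12}^{-1}(\cdots)+R_0$), and you correctly name every ingredient: the transport equation for $\p_\nu(\alpha_T)_0$, the differentiated eikonal equation converting $\p_{x_3}N_T$ into $\p_{x_3}c_+$, and the formula for $(h_T)_{-1}$. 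But you stop exactly where the proof begins. The linear independence of your $P_1,P_2,P_3$, which you defer as ``the delicate point,'' \emph{is} the entire technical content of the paper's argument; leaving it as ``structurally expected'' is a genuine gap, since without it none of the three conclusions follows. The paper closes it by carrying the computation of $\langle(h_T)_{-1},\nu\rangle$, $\p_\nu(\alpha_T)_0$, and $\kappa_+\nabla_x\cdot((\alpha_T)_0N_T)$ all the way to the explicit identity
\[
D(\alpha_R)_{-1}=-\Big[\tfrac12\,\p_\nu\log c_+\Big(1-\tfrac{3|\eta'|^2}{\xi_{T,3}^2}\Big)+\p_\nu\log\sqrt\rho\,\Big](\alpha_T)_0-\rho_+(\alpha_T)_0\langle N_T,\dPhi\rangle\hat\xi_{T,3}+R_0,
\]
from which the three angular profiles are read off concretely: constant in the angle for $\p_\nu\log\sqrt\rho$, even in $\eta'$ and nonconstant in $|\hat\eta'|$ for $\p_\nu\log c_+$, and --- decisively --- \emph{odd in $\eta'$} (in its tangential part) with a different $\tau$-scaling for the $\dPhi$ term. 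These are precisely the structural features that rule out the ``accidental resonance'' you worry about, and they must be exhibited, not presumed.

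Two further divergences from the paper are worth noting. First, the order of recovery: the paper determines $\dPhi$ \emph{before} touching $c$ and $\rho$, by pairing covectors on which the even-in-$\eta'$ terms cancel identically --- taking $|\eta'_{(1)}|=|\eta'_{(2)}|$, $\tau_{(1)}=\tau_{(2)}$ with one sign of $\eta'$ flipped isolates $\p_{x_1}\delta\Phi$ and $\p_{x_2}\delta\Phi$, and then $\eta'=0$ with two distinct $\tau$'s isolates $\p_{x_3}\delta\Phi$ via $(\xi_{T,3,(1)}-\xi_{T,3,(2)})\,\p_{x_3}\delta\Phi=0$; only afterwards does varying $|\hat\eta'|$ separate $\p_\nu\log c_+$ from $\p_\nu\log\sqrt\rho$. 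Second, your final identity retains only $\langle\nabla_x\delta\Phi,\nu\rangle$: in general $\delta V_{-1}$ carries $\langle N_T,\nabla_x\delta\Phi\rangle$, whose tangential part does not vanish unless you invoke $\nabla\Phi\parallel\nu$ from Section \ref{s: natural transmission} upfront. The paper instead recovers the tangential components directly from the $\eta'$-odd part of the data and remarks explicitly that no continuity or structural information on $\Phi$ across the interface is needed for this recovery --- a strictly stronger conclusion than your shortcut delivers. In summary: right plan, same architecture as the paper, but the proposition's key fact is asserted rather than proved.
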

\begin{proof}
Let $(a_R)_{-1} = (\tilde{a}_R)_{-1}$ on $\R_+\times\Gamma_-$. 
Next, note that $M$, $(h_I)_{-1}$ and $(h_R)_{-1}$ are $R_0$ termssince they do not contain any normal derivatives of $c_+,\rho_+$. 

By setting $J=-1$ in \eqref{e: Jth transmission cond} we obtain
\begin{multline*}
(\alpha_R)_{-1}
= -M_{11}^{-1} \langle \nu, (h_T)_{-1} \rangle 
\\
+ M_{12}^{-1}(\kappa_+ \nabla_x \cdot ((\alpha_T)_0 N_T)
-\rho_+ (\alpha_T)_0 \langle N_T, \dPhi \rangle + R_0
\end{multline*}
where $M^{-1}_{ij}$ is the $ij$'th entry of $M^{-1}$, so we need to carefully compute $ (h_T)_{-1}.$

We have using \cite[next to equation (45)]{RachDensity}
\begin{multline*}
(h_{T})_{-1}
= 
\frac{-i (\alpha_T)_0}{c^2|\xi_T|}
[ 4(N_T \ominus c_+^2 \nabla_x \log \sqrt \rho_+)N_T
+ 2(N_T \ominus c_+^2 \nabla_x \log (\alpha_T)_0) N_T + c_+^2(\nabla_x N_T)N_T]
\\
=\frac{-i}{c_+^2 |\xi_T|}
\Bigg[
-2(\alpha_T)_0 c_+^2 
(\nabla_x \log \sqrt \rho_+ -
\langle N_T, \nabla_x \log \sqrt \rho_+ \rangle) N_T
\\
-c_+^2(\nabla_x (\alpha_T)_0
- \langle N_T, \nabla_x (\alpha_T)_0 \rangle N_T)
\\
+(\alpha_T)_0 c_+^2 (\nabla_x N_T) N_T
\Bigg],
\end{multline*}
where $(v \ominus w)u
= \frac 1 2 v (w \cdot u) - \frac 1 2 w (v \cdot u)$, and since $h_T \cdot N_T = 0$, the $\ominus$ operation in the above formula projects a vector to the image of $\pi_{N_T}^\perp$. We may then compute
\begin{multline*} \label{a: h_T dot nu}
\langle (h_T)_{-1}, \nu \rangle
= \frac{2i}{|\xi_T|}
(1-\hat \xi_{T,3}^2)(\alpha_T)_0 \p_{x_3} \log \sqrt \rho
+
\frac{i}{|\xi_T|}(1-\hat \xi_{T,3}^2)\p_{x_3}(\alpha_T)_0
\\
-\frac{i(\alpha_0)_T}{|\xi_T|}
\cdot
\frac{\p_{x_3}\xi_{T,3} \xi_{T,3}}{|\xi_T|^2}
(1 - \hat \xi_{3,T}^2)
+R_0.
\end{multline*} \label{a: j+2 derivs of varphi}
We will also need \cite{RachBoundary}
\begin{equation}
\p_\nu^{j+2}
\varphi |_\Gamma
= -(\p_\nu^{j+1} \log c)
\frac{|\xi|^2}{\xi_3} + R_j
\end{equation}
and
\begin{equation}\label{a: derivative of alpha_0}
\p_\nu (\alpha_T)_0
= -\Bigg[ \frac 1 2 \p_\nu \log c_+ \left(1-\frac{|\eta'|^2}{\xi_{T,3}^2}\right)
+ \p_\nu \log \sqrt \rho \Bigg](\alpha_T)_0 + R_0
\end{equation}
since $(a_T)_0$ is independent of $\Phi$.

 The next term we compute is
\begin{multline*}
\kappa_+ \nabla_x \cdot (\alpha_T)_0 N_T
=
\kappa_+ \p_{x_3} (\alpha_T)_0\hat \xi_{T,3} +
\kappa_+ (\alpha_T)_0 \p_{x_3} \hat \xi_{T,3} +R_0
\\
=
\kappa_+ \p_{x_3} (\alpha_T)_0\hat \xi_{T,3} -
\kappa_+ (\alpha_T)_0 \frac{\p_{x_3} \log c_+}{\hat \xi_{T,3}} (1 - \hat \xi_{T,3}^2)+R_0
\end{multline*}
We then have $M_{11}^{-1} = D^{-1}(-i\kappa_+|\xi_T|)$
and $M_{12}^{-1} = -D^{-1} \hat \xi_{T,3}$ where $D = det(M)$ is an $R_0$ term.
Thus, we finally obtain
\begin{multline*}
D(\alpha_R)_{-1}
=
-2\kappa_+
(1-\hat \xi^2_{T,3}) (\alpha_T)_0 \p_{x_3}\log \sqrt \rho
\\
-\kappa_+(1-\hat \xi^2_{T,3}) \p_{x_3}(\alpha_T)_0
\\
-\kappa_+(\alpha_T)_0 (1-\hat \xi^2_{T,3})\p_{x_3}\log c \\
-\kappa_+ \p_{x_3}(\alpha_T)_0 \hat \xi^2_{T,3}
+ \kappa_+ (\alpha_T)_0
\p_{x_3}\log c_+ (1-\hat \xi^2_{T,3})
 -\rho_+ (\alpha_T)_0 \langle N_T, \dPhi \rangle \hat \xi_{T,3} + R_0
\\
= 
 -\Bigg[ \frac 1 2 \p_\nu \log c_+ \left(1-\frac{3|\eta'|^2}{\xi_{T,3}^2}\right)
+ \p_\nu \log \sqrt \rho \Bigg](\alpha_T)_0 
\\
-\rho_+ (\alpha_T)_0 \langle N_T, \dPhi \rangle \hat \xi_{T,3}+ R_0.
\end{multline*}

Therefore, $(a_R)_{-1} = (\tilde{a}_R)_{-1}$ on $\R_{+}\times\Gamma_{-}$ implies
\begin{multline}\label{e: a_r = tilde a_r order -1}
-\Bigg[ \frac 1 2 \p_\nu \log c_+ \left(1-\frac{3|\eta'|^2}{\xi_{T,3}^2}\right)
+ \p_\nu \log \sqrt \rho \Bigg] 
\\
-\rho_+  \langle N_T, \dPhi \rangle \hat \xi_{T,3}
\\
= 
-\Bigg[ \frac 1 2 \p_\nu \log \tilde c_+ \left(1-\frac{3|\eta'|^2}{\xi_{T,3}^2}\right)
+ \p_\nu \log \sqrt {\tilde \rho} \Bigg] 
-\rho_+ \langle N_T, \nabla_x \tilde \Phi  \rangle \hat \xi_{T,3}
\end{multline}
This holds for any covector $(\eta',\tau)$ in the hyperbolic/hyperbolic set. Pick $(\eta'_{(j)}, \tau_{(j)})$, $j = 1, 2$
such that $|\eta'_{(1)}| = |\eta'_{(2)}|$
and $\tau_{(1)} = \tau_{(2)}$. We apply \eqref{e: a_r = tilde a_r order -1} for both covectors and subtract the two equations to obtain (with the terms involving $c$ and $\rho$ cancelling out)
\begin{equation}
(\eta_{1,(1)} - \eta_{1,(2)})(\p_{x_1}\Phi - \p_{x_1}\tilde \Phi)
+ (\eta_{2,(1)} - \eta_{2,(2)})(\p_{x_2}\Phi - \p_{x_2}\tilde \Phi)) = 0.
\end{equation}
Pick $\eta_{1,{1}} = - \eta_{1,(2)}$
and $\eta_{2,{1}} =  \eta_{2,(2)}$ in the above equation to conclude
$\p_{x_1} \Phi - \p_{x_1}\tilde \Phi$ at $\Gamma$. Similarly, we can obtain $\p_{x_2} \Phi - \p_{x_2}\tilde \Phi$.
Next, choose $\eta' = 0$ in \eqref{e: a_r = tilde a_r order -1} but $\tau_{(j)} \neq 0$ and subtract the two equations to get
\begin{equation}
(\xi_{T,3, (1)} - \xi_{T,3,(2)})(\p_{x_3}\Phi - \p_{x_3}\tilde \Phi) = 0.
\end{equation}
Thus, we conclude that $\dPhi|_{\Gamma}$ can be recovered from the symbol of $M_R$ restricted to the hyperbolic set. Note that $\Phi$ and $\p_\nu \Phi$ is continuous across the interface due to the elliptic equation it solves, but the above argument shows that this is not needed in order to recover $\nabla_x \Phi$ across the interface.

Next, to recover $\p_\nu \log c_+$, we consider any $(\eta'_{(j)}, \tau_{(j)})$ such that $|\eta'_{(1)}| \neq |\eta'_{(2)}|$ in \eqref{e: a_r = tilde a_r order -1}, and substracting the two equations from each other gives
\begin{equation} \left(\frac{3|\eta'_{(1)}|^2}{\xi_{T,3,(1)}^2} - \frac{3|\eta'_{(2)}|^2}{\xi_{T,3,(2)}^2}
\right)(\p_\nu \log c_+ - \p_\nu \log \tilde c_+) = 0 
\end{equation}
so $\p_\nu \log c_+ = \p_\nu \log \tilde c_+$ on $\Gamma$. Since $\p_\nu \log c_+$ and $\nabla_x \Phi$ have been recovered in \eqref{e: a_r = tilde a_r order -1}, then we trivially conclude $\p_\nu \log \rho_+ = \p_\nu \log \tilde \rho_+$.
\end{proof}

Now, we have the parameters $c$, $\rho, \Phi$ and their first order normal derivatives determined at $\Gamma$. 
We proceed further in a similar fashion, i.e., considering the term $(a_R)_J$ for $J\leq -2$ and obtain a relation between $\p_{x_3}^{|J|}(a_T)_0$ and $\p_{x_3}^{|J|}(\tilde{a}_T)_0$ from the transport equation, at $\R_+\times\Gamma_+$.
From there, using asymptotic analysis we determine higher order derivatives of $c$, $\rho$ at $\Gamma$. We have the following result.
\begin{Proposition}\label{Prop_a-2}
	Suppose $c = \tilde c$ in $\mathcal O$, $\rho = \tilde \rho$ and $\Phi = \tilde \Phi$ in $\mathcal O_-$, and suppose $a_I$ is known on $\R_+\times\overline{\mathcal{O}}$.
	If $(a_R)_{J} = (\tilde a_R)_J$ on $\R_+\times\Gamma_{-}$ for $J=0,-1,-2,-3, \dots$; then one can uniquely determine $\p_{x_3}^{|J|}\rho_+$, $\p_{x_3}^{|J|} \Phi_+$, on $\Gamma_{+}$.
\end{Proposition}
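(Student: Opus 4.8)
\subsection*{Proof proposal}

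The plan is to prove this by induction on $|J|$, taking the cases $|J|=0$ and $|J|=1$ as already settled in Propositions \ref{Prop_recovery_0} and \ref{Prop_a-1}. The inductive hypothesis at level $J$ is that $\p_{x_3}^k\rho_+=\p_{x_3}^k\tilde\rho_+$ and $\p_{x_3}^k\Phi_+=\p_{x_3}^k\tilde\Phi_+$ on $\Gamma_+$ for every $k\le|J|-1$; since $c=\tilde c$ is assumed throughout $\mathcal O$ and since tangential derivatives of these known restrictions are themselves known, this forces $(\alpha_T)_{J'}=(\tilde\alpha_T)_{J'}$ on $\R_+\times\Gamma_+$ for all $|J'|<|J|$, i.e. all these are $R_{J+1}$ terms in the sense introduced after Proposition \ref{Prop_recovery_0}. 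Bookkeeping shows that the only genuinely new unknowns entering the order-$J$ identity $(\alpha_R)_J=(\tilde\alpha_R)_J$ are the \emph{pure} normal derivatives $\p_{x_3}^{|J|}\rho_+$ and $\p_{x_3}^{|J|}\Phi_+$: every mixed derivative $\p_{x'}^\beta\p_{x_3}^k$ with $k\le|J|-1$ is a tangential derivative of an inductively known restriction, hence already determined.

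The first key idea is to use the coupling with the elliptic equation to dispose of $\Phi$ cheaply. Since $\Delta\Phi=4\pi G\rho$ and $\Delta\tilde\Phi=4\pi G\tilde\rho$, differentiating $|J|-2$ times in the normal direction and writing $\Delta=\p_{x_3}^2+\Delta_{x'}+(\text{lower order, curvature})$ gives
\[
\p_{x_3}^{|J|}(\Phi_+-\tilde\Phi_+)
= 4\pi G\,\p_{x_3}^{|J|-2}(\rho_+-\tilde\rho_+)
-\p_{x_3}^{|J|-2}\Delta_{x'}(\Phi_+-\tilde\Phi_+)+(\text{known lower-order terms}).
\]
For $|J|\ge 2$ the right-hand side is built entirely from quantities with at most $|J|-1$ normal derivatives, which vanish by the inductive hypothesis; hence $\p_{x_3}^{|J|}\Phi_+=\p_{x_3}^{|J|}\tilde\Phi_+$ \emph{automatically}, before any wave data is used. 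Thus the elliptic (gravitational) equation does the work of recovering $\Phi$, and the remaining task is purely to recover $\p_{x_3}^{|J|}\rho_+$ from the hyperbolic data.

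Next I would solve the order-$J$ transmission system \eqref{e: Jth transmission cond} for $(\alpha_R)_J$ using invertibility of $M$ away from the critical angle, expressing $(\alpha_R)_J$ through $(\alpha_T)_J$ and the source $V_J$ (which carries $\kappa_+\nabla_x\cdot((\alpha_T)_{J+1}N_T)$, the $(h_T)_{J+1}$ contributions, and the gravitational couplings $\langle\rho_+(u_T)_{J+1},\dPhi\rangle$), modulo $R_{J+1}$. Feeding in the transport equations \eqref{transport_eq_Interior}, I would iterate the normal derivative: each application of the transport recursion links $\p_{x_3}(\alpha_T)_{J'+1}$ to $(\alpha_T)_{J'+2}$ and to the coefficient jets, and after $|J|$ steps the leading new content is $\p_{x_3}^{|J|}\log\sqrt{\rho_+}$, arising from the factor $\tfrac{d}{ds}\log\sqrt{\rho c}$ in \eqref{transport_eq_J}, together with $\p_{x_3}^{|J|}\Phi_+$, entering through the $\nabla_x^2\Phi$ term of the source $G$ computed in Lemma \ref{l: calculating G}. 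By Lemma \ref{l: asymp expansion of P applied to lagrangian distribution} the nonlocal operator $B(x,D)$ contributes only through lower-order symbols, hence only $R_{J+1}$ terms at $\Gamma$, and so does not affect the top-order balance. The outcome is an identity of the schematic shape
\[
P(\eta',\tau)\,\p_{x_3}^{|J|}\log\sqrt{\rho_+}
+Q(\eta',\tau)\,\p_{x_3}^{|J|}\Phi_+
= P(\eta',\tau)\,\p_{x_3}^{|J|}\log\sqrt{\tilde\rho_+}
+Q(\eta',\tau)\,\p_{x_3}^{|J|}\tilde\Phi_+
+(\text{known }R_{J+1}\text{ terms}),
\]
where $P,Q$ are explicit homogeneous functions of $(\eta',\tau)$ generalizing the coefficients $1$ and $\rho_+\langle N_T,\cdot\rangle\hat\xi_{T,3}$ of \eqref{e: a_r = tilde a_r order -1}, and all $c$-derivative terms have cancelled since $c=\tilde c$. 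Having just shown $\p_{x_3}^{|J|}\Phi_+=\p_{x_3}^{|J|}\tilde\Phi_+$, the $Q$-terms cancel, leaving $P(\eta',\tau)\big(\p_{x_3}^{|J|}\log\sqrt{\rho_+}-\p_{x_3}^{|J|}\log\sqrt{\tilde\rho_+}\big)=0$; choosing a covector where $P\ne0$ yields $\p_{x_3}^{|J|}\rho_+=\p_{x_3}^{|J|}\tilde\rho_+$, which with the elliptic step closes the induction. (When the elliptic closure is unavailable, as in the base case $|J|=1$, one instead separates $\p_{x_3}^{|J|}\rho_+$ and $\p_{x_3}^{|J|}\Phi_+$ directly by varying the direction and size of $\eta'$, exactly as in Proposition \ref{Prop_a-1}.)

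The main obstacle I anticipate is the third-paragraph bookkeeping: tracking precisely which normal-derivative order of $\rho$ and $\Phi$ survives at leading order after iterating the transport recursion $|J|$ times, confirming that the surviving $\rho$-coefficient $P(\eta',\tau)$ is genuinely nonvanishing for generic covectors uniformly in $|J|$, and verifying that the progressively more intricate $\ominus$-projection structure seen already in $(h_T)_{-1}$ never promotes $\rho$ or $\Phi$ to an unexpected higher order. The elliptic closure is what makes the argument robust, since it removes $\p_{x_3}^{|J|}\Phi_+$ as an independent unknown once $|J|\ge2$ and thereby reduces the decoupling problem from two unknowns to one.
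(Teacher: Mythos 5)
Your proposal is correct and follows essentially the same route as the paper's own proof: an induction on the derivative order in which the elliptic equation $\Delta \Phi = k\rho$, written in boundary normal coordinates, shows that $\p_\nu^{|J|}\Phi|_\Gamma$ is an $R_{|J|-1}$ term (so the top-order $\Phi$-derivative is determined before any wave data enters, exactly your ``elliptic closure''), after which $\p_\nu^{|J|}\log\rho_+$ is extracted from the transmission/transport recursion by varying $(\eta',\tau)$, with $c=\tilde c$ making the speed terms cancel and with $B(x,D)$ contributing only lower-order symbol terms by Lemma \ref{l: asymp expansion of P applied to lagrangian distribution}. The one obstacle you flag --- verifying that the surviving coefficient of $\p_{x_3}^{|J|}\log\rho_+$ is nonvanishing --- is precisely the explicit $J=-2$ computation the paper carries out (where the coefficient is a sum of positive multiples of $i\kappa_+/|\xi_T|$, and where the paper also notes that the same computation shows $\p_{x_3}^2\log c_+$ and $\p_{x_3}^2\log\rho_+$ could \emph{not} be recovered simultaneously, which is why $c=\tilde c$ must be assumed), before concluding the higher orders by the analogous argument.
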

\begin{proof}
	Let us assume $(a_R)_J = (\tilde{a}_R)_J$ at $\R_+\times\Gamma_-$ for $J=0,-1,-2$.
	From the transmission condition \eqref{e: Jth transmission cond} and the proof of the previous proposition implies that $\text{div} (a_T)_{-1} |_\Gamma \nu $ is uniquely determined from $(a_R)_{-2}$ since all other terms are $R_1$ terms, having already determined $\dPhi.$ That is, \begin{equation}\label{e: order -2 a_T equation}
\p_{x_3} (a_T)_{-1}|_{\Gamma} = \p_{x_3} (a_T)_{-1}|_{\Gamma}.
    \end{equation}

    The key point is that $\p_\nu^2 \Phi$ can be determined from the elliptic PDE that $\Phi$ solves, so that all $\Phi$ terms in \eqref{e: order -2 a_T equation} can be cancelled and do not appear. We will now provide the proof of this. First, we have $\Delta \Phi = k \rho$ for a uniform constant $k$. This equation using boundary normal coordinates near $\Gamma$ is
    \[
\p_\nu^2 \Phi|_{\Gamma} = k \rho + (\text{ terms involving at most 1 normal derivative of $\Phi$}) = R_1.
    \]
    The right side of the equation also involve terms containing the curvature of $\Gamma$ and its derivatives but those have already been determined from the recovery of $c$, which in particular, recovers $\Gamma$ uniquely.
    By applying $\p_\nu^{|J|}$ to the above equation, we obtain more generally that
    \begin{equation}\label{e: p^J Phi is R_J-1}
\p_\nu^{|J|} \Phi|_{\Gamma}
= R_{|J|-1}.
    \end{equation}

We transmission conditions \eqref{e: Jth transmission cond} show that $(a_T)_{J-1}$ is determined by $\p_{x_3} (a_T)_{J}$ and terms belonging to $R_{|J|}$. Now, $\p_{x_3} (a_T)_{J}$ will have at most $|J|$ normal derivatives of $\rho_+$, $c_+$, and $\Phi_+$. The $\Phi$ calculation above shows that in fact, $\p_{x_3} (a_T)_{J}$ depends on at most $|J|-1$ normal derivatives of $\Phi$. Since we proceed inductively, having already recovered $\p_{x_3}^{|J|-1} \rho_+$, $\p_{x_3}^{|J|-1} c_+$, $\p_{x_3}^{|J|-1} \Phi_+$ on $\Gamma$ from the previous step, we just need to compute the dependence of $\p_{x_3} (a_T)_{J}$ on $\p^{|J|}\rho_+$ and $\p^{|J|}c_+$ since all other terms will be in $R_{|J|}$.

Recall the transport equation that allows us to compute $(\dot{\alpha}_T)_J$:
\[
2i\kappa|\xi_T| (\dot{\alpha})_J
=2i\kappa |\xi_T|\hat \xi_{T,3}
(\p_{x_3} \alpha_T)_J + R_{|J|}
\]
\begin{multline}
=-\Bigg[
(\alpha_T)_J N_T \cdot B_{1,1}(N_T)
+
N\cdot \mL(x,D)  \big( (\alpha_T)_{J+1}N_T + (h_T)_{J+1}\big)
\\
+N_T \cdot B_{1,1}(h_T)_{J}
+ N_T \cdot P_{sc}(\varphi_T)((\alpha_T)_{J+1}N_T + (h_T)_{J+1})
\Bigg]
\\
= -\Bigg[
N_T \cdot B_{1,1}(h_T)_{J}
+ N_T \cdot \mL(x,D)
\big( (\alpha_T)_{J+1}N_T + (h_T)_{J+1}\big) \Bigg]
+ R_{|J|}
\end{multline}
where we have written $B_p$ in \eqref{e: B_p term} as $B_p = B_{1,1} + P_{sc}(\varphi_T)$, such that

\begin{align*}
B_{1,1} M
&= i\p_{\tau,\xi}p(t,x,\p_{t,x}\varphi_T) \cdot \p_{t,x} M
+i(ip_1)(t,x,\p_{t,x}\varphi_T)M
\\
P_{sc}(\varphi_T) M &= \frac{i}{2}\sum_{|\alpha|=2}\sum_{l=1}^3
(\p_{\tau,\xi}^\alpha p^{il})(t,x,\p_{t,x}\phi)
\cdot (\p^\alpha_{t,x} \phi) M^l,
\end{align*}

After a computation, we obtain
\begin{multline}
\p_{x_3}(\alpha_T)_J
= 
-\frac{1}{2\rho_+ c_+^2 \xi_{T,3}} \Bigg[
\kappa_+ \p_{x_3}(h_T)_{J}\cdot N_T |\xi_{T,3}|
+i \kappa_+\p_{x_3}^2 (h_T)_{J+1} \cdot N_T
\\
+\p_{x_3}^2(\alpha_T)_{J+1} \kappa_+ \hat \xi^2_{T,3} - (\alpha_T)_{J+1} \kappa_+\p_{x_3}^2 \log c_+  \frac{|\eta'|^2}{|\xi_T|^2} 
+ \langle F_{\Phi, J+1}, N_T \rangle \Bigg]+ R_{|J|},
\end{multline}
where
\begin{multline}
\langle F_{\Phi,J}, N_T \rangle
= \Big(-\nabla_x \langle \dPhi, \rho u_T \rangle + \nabla_x \cdot (\rho_+ u_T) \dPhi \Big)_J
\\
=
-\rho_+ \nabla_x^2 \Phi( (\alpha_T)_J N_T + (h_T)_{J},N_T)
-\rho_+ (\alpha_T)_J N_T \cdot \nabla_x N_T (\dPhi)
\\
-\rho_+\nabla_x h_{J}(\nabla_x \Phi, N_T)
+ \langle N_T, \dPhi \rangle (\rho_+ (\alpha_T)_{J} \nabla_x \cdot N_T + \rho_+ \nabla_x \cdot (h_T)_J) + R_{|J|-1},
\end{multline}
using the notation $A(V, W) = W\cdot (AV)$ for a matrix $A$ and vectors $V,W$.
In particular
\[
\langle F_{\Phi,0}, N_T \rangle = - \p_{x_3}^2 \Phi \rho_+ (\alpha_T)_0 \hat \xi_{T,3}^2 +R_1 = R_1
\]
since we showed $\p_{x_3}^2 \Phi$ is an $R_1$ term. Hence, using $J=-1$ in the above formulas, since $\p_\nu^2 \Phi$ has already been recovered, we write down each term that will contain either $\p_\nu^2 \log c$ or $\p_\nu^2 \log \rho$:
\begin{multline}
R_1 |\xi_R| (\alpha_R)_{-2}
= \kappa_+ \nabla_x \cdot (a_T)_{-1} + R_1
= 
\kappa_+ \p_{x_3}(\alpha_T)_{-1} \hat \xi_{T,3} + \kappa_+\p_{x_3} (h_T)_{-1} \cdot \nu + R_1
\\
=
\frac{\kappa_+}{|\xi_{T}|}
\Big[i \p_{x_3}^2 (\alpha_T)_0 \hat \xi_{T,3}^2 
- i(\alpha_T)_0 \p^2_{x_3} \log c_+ \frac{|\eta'|^2}{\xi_{T,3}^2} \hat \xi_{T,3}^2\Big]
\\
+\frac{i(\alpha_T)_0}{|\xi_T|}
\Big[\p_{x_3}^2\kappa_+(1-\hat \xi_{T,3}^2)
+\kappa_+ \p^2_{x_3}(\alpha_T)_0 (1-\hat \xi_{T,3}^2)
+\kappa_+ \p_{x_3}^2|\xi_T|
\Big]
+ R_1,
\end{multline}
which agrees with the computation for $\p_\nu (\alpha)_{-1}$ and $\p_\nu (h)_{-1}$ in \cite{RachBoundary}.

This simplifies to
\begin{multline}
\frac{i\kappa_+ (\alpha_T)_0}{|\xi_T|}\left[\frac{1}{2}\p^2_{x_3} \log c_+ \left(1-\frac{2|\eta'|^2}{|\xi_T|^2}\right) + \p_{x_3}^2 \log \sqrt \rho_+ 
\right]
\\
+
\frac{i\kappa_+}{|\xi_T|}\left( \p^2_{x_3} \log \rho_+ + 
\p_{x_3}^2 \log c_+ \right) (1-\hat \xi_{T,3}^2) + R_1.
\end{multline}
Hence, we see that due to the $\p_{x_3}(h_T)_{-1}$ term, we cannot recover both $\p_{x_3}^2 \log c_+$ and $\p_{x_3}^2 \log \rho_+$ simultaneously from $(\alpha_R)_{-2}.$ This is why we need to have already recovered $c_+$ before we recover the derivatives of $\rho_+$, as we do in the main theorem of the paper using local travel time tomography. Hence, with $c_+$ recovered, we may recover $\p_\nu^2 \log \rho_+$ using the same argument for the first normal derivative.

Continuing inductively, we may show that $(\alpha_R)_{J}$ is an $R_{|J|}$ term for $J = -1, -2, \dots$ and then recover $\p_\nu^{|J|} \log \rho_+$ from $(\alpha_R)_{J}.$ by the analogous argument by using \eqref{e: p^J Phi is R_J-1} to treat $\p^{|J|}_\nu \Phi$ terms as lower order. 
\end{proof}

Now let us consider the original operator with $B(x,D)$. Since $B(x,D)$ is zeroth order, the same proof as above goes through but it requires a careful justification that we now provide.
\begin{enumerate}
\item[$\bullet$]
The natural transmission conditions \eqref{e: trans conditions} are unchanged since $B$ is zeroth order. Hence, equations $\eqref{transmission_cond_J}$ and $\eqref{e: Jth transmission cond}$ remain unchanged.
\item[$\bullet$] We saw in section \ref{Sec_geometric_cond} that $(a_\bullet)_0$ is independent of $B$ so proposition \ref{Prop_recovery_0} continues to hold with the same proof.

\item[$\bullet$] By \eqref{transmission_cond_J} and \eqref{e: bdy conditions of a_I}, $(a_R)_{-1}|_\Gamma$ depends only on $\p_\nu (a_\bullet)_0|_\Gamma$ for $\bullet = I,R,T$ and not the lower order amplitudes. Since these quantities are independent of $B$, proposition \ref{Prop_a-1} holds with the same proof, so that $\rho, \p_\nu \rho,  \dPhi, c, \p_\nu c$ are recovered on the other side of the interface with no change in the proof. 
\item[$\bullet$]
Similary, $(a_R)_{-2}|_\Gamma$ depends only on $\p_\nu (a_\bullet)_{-1}|_\Gamma$ and $(a_\bullet)_0$ for $\bullet = I,R,T$ and not the lower order amplitudes. Looking at \eqref{transport_eq_Interior}, $\p_\nu (a_{\bullet})_{-1}$ depends on the on the zeroth order term of the symbol of $\rho B \rho$ which is $\rho^2 b_0$ and not the lower order terms in the expansion. Since $\rho$ has already been determined at the interface, the formula for $(a_R)_{-2}$ in the above proof remains the same modulo $R_0$ terms. Hence, the proof of proposition \ref{Prop_a-2} remains valid to recover $\p_\nu^2 \rho|_{\Gamma^+}$. 

\item[$\bullet$]Before generalizing, let us consider $(a_R)_{-3}$.
Again,  $(a_R)_{-3}|_\Gamma$ depends only on $\p_\nu (a_\bullet)_{-2}|_\Gamma$ for $\bullet = I,R,T$ and not the lower order amplitudes. Looking at \eqref{transport_eq_Interior}, $\p_\nu (a_{\bullet})_{-2}$ depends on the on the symbol of order $-1$ in the asymptotic expansion of the symbol of $\rho B \rho$. By lemma \ref{l: asymp expansion of P applied to lagrangian distribution}, this will involve $\rho$ and $\nabla_x \rho$ and not any higher order derivatives of $\rho$, and these have already been determined from the previous steps. Thus, the proof of proposition \ref{Prop_a-2} for $J=-3$ remains valid to recover $\p_\nu^3 \rho|_{\Gamma^+}$. 

\item[$\bullet$]
Proceeding inductively from the previous steps, 
 $(a_R)_{-J}|_\Gamma$ depends only on $\p_\nu (a_\bullet)_{-J+1}|_\Gamma$ for $\bullet = I,R,T$ and not the lower order amplitudes. Looking at \eqref{transport_eq_Interior}, $\p_\nu (a_{\bullet})_{-J+1}$ depends on the symbol of order $-J+2$ in the asymptotic expansion of the symbol of $\rho B \rho$. By lemma \ref{l: asymp expansion of P applied to lagrangian distribution}, this will involve $\rho, \nabla_x, \dots, \nabla_x^{J-2} \rho$ and not any higher order derivatives of $\rho$, and these have already been determined from the previous steps. Thus, the proof of proposition \ref{Prop_a-2} for each $J$ remains valid to recover $\p_\nu^J \rho|_{\Gamma^+}$.

\end{enumerate}

Here, we end our discussion on recovery across interfaces with a final remark.
\begin{Remark}
	 In order to complete the proof of the Theorem \ref{Main_th_1} we observe that $(a_T)_J$ can be recovered at $\R_+\times\Gamma_+$ from the knowledge of the parameters and their derivatives at $\Gamma_+$ along with the transmission condition \eqref{transmission_cond_J}, when $a_I$ and $a_R$ are known in $\R_+\times\overline{\mathcal{O}}_-$.
	 Therefore, one can determine the transmission operator $M_T$ at $\R_+\times\Gamma$.
\end{Remark}

\section{Proof of lemma \ref{l: calculating G}} \label{a: proving formula for A(x)}
In this appendix, we do the computations to prove lemma \ref{l: calculating G} that we break into several parts.
\subsection*{Computing \texorpdfstring{$h_{-1}$}{Lg}}
We have the operator lower order principal symbol
\[
p_1(t,x,\p_x \phi)
=-i\nabla_x \lambda \otimes \nabla_x \phi + \rho i \p_x \phi \otimes \nabla_x \Phi - \rho i \nabla_x \Phi \otimes \nabla_x \phi 
\]
so that we compute a constituent of $h_{-1}$
\[
\pi_p^\perp p_1(t,x,\p_{t,x}\phi)(\alpha)_0 N
= -i |\xi| \nabla_x \lambda (\alpha)_0 - i \rho |\xi| \nabla_x \Phi (\alpha)_0,
\]
so the last term is the only term containing $\Phi$ when computing $h_-1$.

Since $h_{-1} \cdot N =0$, we compute explicitly from \cite[below equation (45)]{RachDensity}
\begin{multline}
h_{-1}= \frac{-1}{\rho c^2 |\xi|^2}B_p( (\alpha)_0 N) \\
= \frac{-1}{\rho c^2 |\xi|^2} \cdot i \rho |\xi|\Bigg(
2c \p_t((\alpha)_0 N)+ c^2\left[ [\nabla_x \otimes (\alpha_0 N)]N + [\nabla_x \cdot (\alpha_0 N)] N   \right]\\
+ \left[2c^2 \nabla_x \log \sqrt{\rho c^2}
+ c^2 \frac{[\nabla_x \otimes \xi]}{|\xi|} N + \nabla_x \Phi \right](\alpha)_0
\Bigg)
\end{multline}

\subsection*{Computing \texorpdfstring{$N \cdot C_p (\alpha)_0 N$}{Lg}}

First we compute the part of $p_1(t,x,D_x)(\alpha_0 N)$ that depends on $\Phi$:
\begin{multline}
p_{1, \Phi}(t,x,D_x) (\alpha_0 N) = \nabla (\rho \alpha N \cdot \nabla \Phi) - \nabla \cdot (\rho \alpha_0 N) \nabla_x \Phi 
\\
=  N \frac{H}{\sqrt \rho} (\nabla_x \rho \cdot \nabla \Phi)
+ N \rho \left(  \frac{H}{\sqrt \rho} -  \frac{H}{\sqrt \rho} \nabla_x \log \sqrt \rho \right) \cdot \nabla \Phi
\\
+ \rho  \frac{H}{\sqrt \rho} (\nabla_x \otimes N)\nabla_x \Phi
-  \frac{H}{\sqrt \rho} (\nabla_x \rho \cdot N) \nabla_x \Phi 
- \rho \left( \frac{H}{\sqrt \rho}-  \frac{H}{\sqrt \rho}\nabla_x \log \sqrt \rho \right) \cdot N \nabla_x \Phi
\\
-\sqrt \rho H (\nabla_x \cdot N)\nabla_x \Phi
+ \sqrt \rho H (\nabla_x^2\Phi N)
\\
= \sqrt \rho H \Bigg(
2N (\nabla_x \log \sqrt \rho \cdot \nabla_x \Phi )
+ N \left(  \frac{\nabla_x H}{H} \cdot \nabla_x \Phi - \nabla_x \log \sqrt \rho \cdot \nabla_x \Phi\right) 
\\
+ (\nabla_x \otimes N ) \nabla_x \Phi
\\
-2\langle \nabla_x \log\sqrt \rho \cdot N\rangle \dPhi
- \left( \frac{\nabla_x H}{H} \cdot N - \nabla_x \log \sqrt \rho \cdot N \right) \dPhi
\\
- (\nabla_x \cdot N) \dPhi + (\nabla_x^2 \Phi)N \Bigg).
\end{multline}

Thus, 
\begin{multline}
N\cdot p_{1,\Phi}(\alpha_0 N) 
= \sqrt \rho H \Bigg(
\nabla_x \log \sqrt \rho \cdot \dPhi +  \frac{\nabla_x H}{H} \cdot \dPhi 
+ \dPhi \cdot (\nabla_x \otimes N)^t N
\\
- \langle \nabla_x \log \sqrt \rho, N \rangle \langle \dPhi, N \rangle- \langle  \frac{\nabla_x H}{H}, N \rangle \langle \dPhi, N \rangle \\
- (\nabla_x \cdot N) \langle \dPhi , N\rangle
+ \nabla_x^2 \Phi(N, N) \Bigg)
\end{multline}

Using \cite[equation (43)]{RachDensity}
we obtain (with terms containing $\Phi$ highlighted)
\begin{multline}
N \cdot C_p (\alpha_0 N) \\
=-H\sqrt \rho \Bigg( c^2 [N \nabla_x^2\log \sqrt \rho N]
+ c^2[(\log \sqrt{\rho})']^2
\\
+ (\log \sqrt{\rho})' \left[2c^2 (\log \sqrt{c^2})'
-c^2 (\nabla_x \cdot N)\right]
\\
+ \nabla_x \log \sqrt{\rho} \cdot c^2 (\nabla_x \otimes N)^t N
\\
\textcolor{red}{+\nabla_x \log \sqrt \rho \cdot \dPhi +  \frac{\nabla_x H}{H} \cdot \dPhi 
+ \dPhi \cdot (\nabla_x \otimes N)^t N}
\\
\textcolor{red}{- \langle \nabla_x \log \sqrt \rho, N \rangle \langle \dPhi, N \rangle- \langle  \frac{\nabla_x H}{H}, N \rangle \langle \dPhi, N \rangle} \\
\textcolor{red}{- (\nabla_x \cdot N) \langle \dPhi , N\rangle
+ \nabla_x^2 \Phi(N, N) }
+ \text{ terms that do not depend on $\rho$ or $\Phi$} \Bigg)
\end{multline}

\subsection*{Computing \texorpdfstring{$N\cdot B_p h_{-1}$}{Lg}}

First, since $N \cdot h_{-1}=0$, then
\begin{multline}
N \cdot P_{1,1}h_{-1} = 
-i\lambda N \cdot [ \nabla_x \phi \otimes \nabla_x + \nabla_x \otimes \nabla_x \phi] h_{-1}
=-i \lambda \nabla_x \phi  \nabla_x \cdot h_{-1}
\\
= \sqrt \rho H \Bigg[\textcolor{red}{ c^2 \frac{|\xi|}{H}\nabla_x \frac{H}{c^2|\xi|} \cdot \dPhi- \nabla_x \log \sqrt \rho \cdot \dPhi + \Delta_x \Phi} \Bigg]\\
+  \text{ (terms that do not depend on $\Phi$)}
\end{multline}
Also
\begin{multline} \label{e: P_{0,1}h_-1}
N \cdot P_{0,1}(t,x,\p_{t,x}\phi)h_{-1}
= N \cdot i(-\nabla_x \lambda \otimes \nabla_x \phi + \rho \nabla_x \phi \otimes \dPhi
- \rho \dPhi \otimes \nabla_x \phi) h_{-1}
\\
= \sqrt\rho H( \textcolor{red}{-c^{-2}|\dPhi|^2} + \text{ (terms that do not depend on $\Phi$)}
\end{multline}
Finally, we have the additional term
\[
N \cdot \rho b_0(t,x, \p_x \phi) \rho \alpha_0 N = \sqrt \rho H  (-\rho)
\]
coming from the self-gravitation.

We combine the above pieces to obtain the desired result
\begin{align}
g(\alpha)_{-1} &=\int gG + C \nonumber \\
&= \int \frac{\sqrt \rho}{H}\cdot \frac{-1}{2i\rho c^2|\xi|} \cdot(-H\sqrt \rho)[N^t \cdot A(x) \cdot N] + C
\end{align}
where the rank two tensor $A(x)$ depends only on $x$ and is given by
\begin{multline}
A(x) = c^2 \nabla_x^2 \log \sqrt \rho 
+ 2\nabla_x \log \sqrt \rho \otimes \nabla_x c^2
\\
- I \Bigg[ c^2 \Delta(\log \sqrt \rho)] 
-c^2|\nabla_x \log \sqrt \rho |^2
+ \nabla_x \log \sqrt \rho \cdot \nabla_x c^2 \Bigg]
\\
\textcolor{red}{- \nabla_x \log \sqrt \rho \otimes \dPhi-   \frac{\nabla_x H}{H}  \otimes \dPhi + (\nabla_x \otimes N)\dPhi \otimes N}\\
\textcolor{red}{- (\nabla_x \cdot N)  \dPhi \otimes N
+ \nabla_x^2 \Phi}
\\
\textcolor{red}{+I\Bigg[c^2 \frac{|\xi|}{H}\nabla_x \frac{H}{c^2|\xi|} \cdot \dPhi- \nabla_x \log \sqrt \rho \cdot \dPhi + \Delta_x \Phi - c^{-2}|\dPhi|^2 - \rho} \Bigg]
\\
+(\text{ terms independent of $\rho$ and $\Phi$}).
\end{multline}

\bibliographystyle{amsplain}
\bibliography{ElasticFluidCase.bib}

\end{document}